\newtheorem{lemma}{Lemma}[section] 
\newtheorem{theorem}{Theorem}[section] 
\newtheorem{example}{Example}[section] 
\numberwithin{equation}{section}
\newcommand{\PreserveBackslash}[1]{\let\temp=\\#1\let\\=\temp}
\newcolumntype{C}[1]{>{\PreserveBackslash\centering}p{#1}}
\newcolumntype{R}[1]{>{\PreserveBackslash\raggedleft}p{#1}}
\newcolumntype{L}[1]{>{\PreserveBackslash\raggedright}p{#1}}
\newcommand{\normmm}[1]{{\left\vert\kern-0.35ex\left\vert\kern-0.25ex\left\vert #1
    \right\vert\kern-0.25ex\right\vert\kern-0.25ex\right\vert}}
\begin{document}
\title{Multiresolution Galerkin method for solving the functional distribution of anomalous diffusion described by time-space fractional diffusion equation}

\thanks{This work was supported by the National Natural Science Foundation of China under Grant  No. 11271173.}
\author{Zhijiang Zhang}\address{School of Mathematics and Statistics,
Gansu Key Laboratory of Applied Mathematics and Complex Systems, Lanzhou University, Lanzhou 730000, P.R. China. zhangzj2011@lzu.edu.cn; dengwh@lzu.edu.cn}
\author{Weihua Deng$^{1}$}
%
%
\begin{abstract}
The functional distributions of particle trajectories have wide applications, including the occupation time in half-space, the first passage time, and the maximal displacement, etc. The models discussed in this paper are for characterizing the  distribution of the functionals of the paths of anomalous diffusion described by time-space fractional diffusion equation. This paper focuses on providing effective computation methods for the models. Two kinds of time stepping schemes are proposed for the fractional substantial derivative. The multiresolution Galerkin method with wavelet B-spline is used for space approximation. Compared with the finite element or spectral polynomial bases, the wavelet B-spline bases have the advantage of keeping the Toeplitz structure of the stiffness matrix, and being easy to generate the matrix elements and to perform preconditioning. The unconditional stability and convergence of the provided schemes are theoretically proved and numerically verified. Finally, we also discuss the efficient implementations and some extensions of the schemes, such as the wavelet preconditioning and the non-uniform time discretization.


\end{abstract}
%
%
\subjclass{26A33, 65T60, 65M12, 65F08}
\begin{keywords}{Functional distribution, fractional substantial derivative, multiresolution Galerkin method (MGM).}
\end{keywords}
\maketitle
\section{Introduction}
A large number of small events, e.g., the motion of pollen grains in water, induce the Brownian dynamics. However, sometimes the rare but large fluctuations result in non-Brownian motion. The small or large fluctuations are reflected by the thin or heavy  tails of the probability density functions (PDF) of the waiting times and jump lengths in the continuous time random walk (CTRW) model. For the CTRW models, if both the first moment of the waiting times and the second moment of the jump lengths are finite, then they describe the Brownian motion. The dynamics considered in this paper can be characterized by CTRW models with the power law waiting times and jump lengths distributions; and both the first moment of the waiting times and the second moment of the jump lengths are divergent. Generally it is called anomalous dynamics.
And the corresponding Fokker-Planck equation is the time-space fractional diffusion equation \cite{Metzler:00}.

The distributions of the functionals of the particle trajectories have wide applications, e.g., the occupation time in half-space, the first passage time, and the maximal displacement, etc. In 1949, inspired by Feynman¡¯s path integrals, Kac derived a Schr\"odinger-like equation for the distribution of the functionals of Brownian motion \cite{Kac:49}. For deep understanding and digging out the potential applications of anomalous diffusion, Carmi, Turgeman, and Barkai derive the models of characterizing the  distribution of the functionals of the paths of anomalous diffusion described by time-space fractional diffusion equation \cite{Carmi:11,Carmi:07,Turgeman:09}, which involve the fractional substantial derivative \cite{Carmi:11, Friedrich:06}. This paper focuses on providing the effective computation methods for the models.
Denoting $x(t)$ as a trajectory of a particle and $U(x)$ a prescribed function, the functional can be defined as $A=\int_0^tU(x(\tau))d\tau$. There are many useful and interesting functionals $A$. For example, we can take $U(x) = 1$ in a given domain and to be zero at other place, which characterizes the  occupation time of a particle in the domain; this functional can be used in kinetic studies of chemical reactions that take place exclusively in the domain. For studying the ergodicity or ergodicity breaking of the non-Brownian motion in inhomogeneous disorder dispersive systems, the $U(x)$ can also be taken as $x$ or $x^2$.

In the CTRW framework, the motion of the particle discussed in this paper is with the PDF of waiting times $ \psi(t)\simeq t^{-(1+\gamma)} \, (\gamma \in (0,1))$ and the PDF of jump lengths $\eta(x)\simeq x^{-(1+\alpha)} \, (\alpha \in (1,2))$. Letting  $G(x,A,t)$  be the joint PDF of finding the particle on $(x,A)$ at time $t$ and $G(x,p,t)$ be the Laplace transform $(U(x)\ge 0)$ $A\to p$ of $G(x,A,t)$,  it satisfies
\begin{equation} \label{eq:subs-1}
\frac{\partial}{\partial t}G(x,p,t)=K_{\gamma,\alpha}\,\nabla_x^{\alpha}\,{}^SD_t^{1-\gamma}G(x,p,t)-pU(x)G(x,p,t),
\end{equation}
being called the forward fractional Feynman-Kac equation \cite{Carmi:11}, where $K_{\gamma,\alpha}>0$ and
the symbol ${}^SD_t^{1-\gamma}$ is the Friedrich's fractional substantial derivative and is equal in Laplace space $t\to s$ to $\left[s+pU(x)\right]^{1-\gamma}$, and $\nabla_x^{\alpha}$ is the Riesz spatial fractional derivative operator defined in Fourier $x\to k$ space as $\nabla_x^{\alpha} \to -\left|k\right|^{\alpha}$. By solving Eq. (\ref{eq:subs-1}) and then performing the inverse Laplace transform, we can get $G(x,A,t)$. If you are only interested in the distribution of the functional $A$, regardless of the current position of the particle $x$, then the integration of $G(x,A,t)$ w.r.t $x$ from $-\infty$ to $+\infty$ is needed. In fact, more convenient way to obtain the distribution of the functional $A$ is to solve the backward fractional Feynman-Kac equation, from which $G_{x_0}(A,t)$ is got. Here $x_0$ is the starting position of the particle; specifying the value of $x_0$ leads to the distribution of the functional $A$. The $G_{x_0}(p,t)$ satifies
\begin{equation} \label{eq:subs-2}
\frac{\partial}{\partial t}G_{x_0}(p,t)=K_{\gamma,\alpha}\,{}^SD_t^{1-\gamma}\,\nabla_{x_0}^{\alpha}G_{x_0}(p,t)-pU(x_0)G_{x_0}(p,t).
\end{equation}
The main efforts of this paper are put into numerically solving Eq. (\ref{eq:subs-2}) with appropriate initial and boundary conditions. Efficient numerical implementations and numerical results are also presented for Eq. (\ref{eq:subs-1}). Most of the time, the analytical solutions of fractional PDEs are not available; occasionally obtained analytical solutions are usually appeared in the form of transcendental functions or infinite series. Therefore, the numerical techniques become essential.

Denoting ${}_0D_t^{-\gamma, \rho}v(t)$ as the tempered Riemann-Liouville fractional integral \cite{Baeumera:10, Cartea:07, Sabzikar:14,Tatar:04}:
\begin{equation}
{}_0D_t^{-\gamma,\rho}v(t)=\frac{1}{\Gamma(\gamma)}\int_0^t e^{-\rho(t-s)}v(s)\frac{ds}{(t-s)^{1-\gamma}},
\end{equation}
it is easy to check that the fractional substantial derivative can be written as
\begin{equation}\label{subsdef-1}
{}^SD_t^{1-\gamma}G_{x_0}(p,t)=\left(\frac{d}{dt}+pU(x_0)\right)\left[e^{-pU(x_0)t}\,{}_0D_t^{-\gamma,0}\left(e^{p U(x_0)t}G_{x_0}(p,t)\right)\right].
\end{equation}
Supposing that $G_{x_0}(p,t)$  has compact support w.r.t. $x_0$ in $\Omega=(a,b)$,  one can rewrite the Riesz space fractional derivative  as \cite{Ding:12,Ervin:07, Xu:14,Yang:10,Zhang:10}
\begin{equation}
\nabla_{x_0}^{\alpha}G_{x_0}(p,t)=\frac{-1}{2\cos(\alpha\pi/2)\Gamma(2-\alpha)}\frac{d^2}{dx_0^2}\int_a^b\left|x_0-\xi\right|^{1-\alpha}G_{\xi}(p,t)d\xi.
\end{equation}
Using the properties of Laplace transform, Eq. (\ref{eq:subs-2}) is mathematically equivalent  to
\begin{equation}\label{eq:subs-3}
{}^SD_{*,t}^{\gamma}G_{x_0}(p,t)=K_{\gamma,\alpha}\nabla_{x_0}^{\alpha}G_{x_0}(p,t),
\end{equation}
where ${}^SD_{*,t}^{\gamma}G_{x_0}(p,t)$ is given as
\begin{eqnarray}\label{subsdef-2}
{}^SD_{*,t}^{\gamma}G_{x_0}(p,t)=e^{-pU(x_0)t}\,{}_0D_t^{-(1-\gamma),0}\left[e^{pU(x_0)t}\left(\frac{d}{dt}+pU(x_0)\right)G_{x_0}(p,t)\right]. \label{eq:caputo}
\end{eqnarray}
The theory analyses for (\ref{eq:subs-3}) apply directly to the more general non-symmetric space fractional derivatives \cite{Deng:08,Ervin:06,Li:10,Liu:04}. And it also displays distinctive computation features that affect the efficiency of the solving process.
With a few calculations, one can show that
\begin{eqnarray}
 &&{}^SD_t^{1-\gamma}G_{x_0}(p,t)={}_0D_t^{1-\gamma,pU}G_{x_0}(p,t):=\frac{e^{-pU(x_0)t}}{\Gamma(\gamma)}\frac{d}{dt}\int_0^te^{pU(x_0)s }G_{x_0}(p,s)\frac{ds}{(t-s)^{1-\gamma}};\\ [5pt]
  &&{}^SD_{*,t}^{\gamma}G_{x_0}(p,t)={}_0D_{*,t}^{\gamma,pU}G_{x_0}(p,t):=\frac{e^{-pU(x_0) t}}{\Gamma(1-\gamma)}\int_0^t\frac{\partial(e^{pU(x_0) s}G_{x_0}(p,s))}{\partial s}\frac{ds}{(t-s)^{\gamma}}, \label{eq:caputosubstential}
\end{eqnarray}
when $pU$ is a positive constant, they become the tempered Riemann-Liouville and Caputo fractional derivative \cite{Baeumera:10, Cartea:07, Sabzikar:14,Tatar:04}, respectively. The derivatives given in (\ref{subsdef-1}) and (\ref{subsdef-2}) are, respectively, called the Riemann-Liouville and Caputo fractional substantial derivative. One should keep in mind that $p$ denotes the parameter in Laplace  space, which  in practice can be  a positive real number or  a complex number with nonnegative real component. If one relaxes the Laplace transform to Fourier transform (when $A\in R$), at this moment $p$ should take complex value  with  zero real component \cite{Carmi:07}. And in the special case $p=0$, Eq. (\ref{eq:subs-2}) and (\ref{eq:subs-3}) reduce to the usual time-space fractional diffusion equations, which have been well studied by many authors \cite{Deng:08,Li:10,Lin:11, Zhang:11, Zhang:14}.

For the differential equations with time and/or space fractional derivatives, instead of fractional substantial derivatives, their numerical schemes have been well developed. The fractional discrete convolution based on the fractional version of the backward differential formula (FBDF) \cite{ Lubich:88, Lubich:04} and the directly numerical integral based on  product integration rules (PI) \cite{Diethelm:97,Galeone:06} are frequently  used to discretize the time fractional derivative; the former generally uses uniform meshes but the latter can easily adapt to non-uniform grids \cite{Zhang:14} to overcome the initial singularity or to reduce the computation cost.
More recently, E. Cuesta at al \cite{Cuesta:06} also developed convolution quadrature  for the fractional diffusion-wave equations, with the highlight  to render the time integration of second order accuracy under realistic and weak regularity assumptions.  K.  Mustapha et al. \cite{Mustapha:13} proposed non-uniform time partition discontinuous finite element method  for the fractional sub-diffusion and wave equations. Y. Lin et al. \cite{Lin:11} presented
the $L_1$ approximation  and its variable time step version are also checked by \cite{Yuste:12,Zhang:14}. The time discrete schemes via Laplace transforms  are  proposed by  W. Mclean et al. \cite{Mclean:06}. In this paper, we provide the time-stepping schemes to approximate the time-space coupled fractional substantial derivative. Taking into account the regularity of the solution, the balance between accuracy, numerical stability, and computation complexity, the following analysis will be restricted to the schemes with convergence order no more than two.


To deal with the Riemann-Liouville space fractional derivative, the finite difference methods \cite{Meerschaert:04,Sousa:11,Tian:15,Yuste:05,Zhao:14}, the continuous/discontinuous finite element methods \cite{ Deng:08, Ervin:07,Mustapha:13, Xu:14} and the spectral/spectral element methods \cite{Li:09,Li:10,Zayernouri:13,Zayernouri:14} are also well developed nowadays. Recently, Wang et al. \cite{Wang:14, Wang:15} proposed a Petrov-Galerkin finite element method to deal with the fractional boundary value problem  with variable-coefficient.  H. Hejazi et al.\cite{Hejazi:14} studied the finite volume method for solving the fractional advection-dispersion equation. Fast algorithms was also considered \cite{Pang:12,Wang:12} to solve  the corresponding algebraic equations. To the best of our knowledge, until so far lack the references of using wavelet method to solve fractional PDEs, although the wavelet numerical methods for classical PDEs have been well developed; see, e.g., \cite{Cai:96,Cohen:00,Dahmen:07,Dahmen:97,Jia:11,Urban:09,Vasilyev:05,Wang:96}.
Comparing to the traditional finite element or polynomial approximation, the stiffness matrix with wavelet scaling bases is  Toeplitz; the reason is that the bases are obtained by dilating and translating of a single function, which allows one to generate the stiffness matrix with the storage and computation cost only $\mathcal{O}(N)$. Meanwhile, unlike the finite difference method \cite{Zhao:15}, the special boundary correction weights introduced for keeping the high order approximation  lead to difficulty in numerical analysis. Moreover, combining with the FFT and FWT, one can solve the corresponding matrix equation with the computation complexity $\mathcal{O}(N\log N)$;  one can also design effective adaptive algorithms using the multiscale bases without the difficulty to have local a posteriori error estimate; see the details in \cite{Zhang:15}. In this paper, we develop the multiresolution Galerkin method (MGM) for space discretization.

The contents are organized as follows. In Section 2, some preliminary lemmas and related background knowledge are introduced. The  numerical discrerization schemes and the corresponding stability and convergence analyses are provided and detailedly proved in Section 3. In Section 4, we discuss the effective implementations and three numerical examples are computed, which well verify the theoretical results given in Section 3. And we conclude the paper with some remarks in the last section.

\section{Preliminaries}
In this section, we collect/derive some essential properties or schemes for the fractional substantial derivatives
and  wavelets.  In the following, $(\cdot,\cdot)$ and $\|\cdot\|$ denote the $L_2$ inner product and norm, respectively. For $\mu$ being an integer, $H^{\mu}(\Omega)$ denotes the usual Sobolev space, otherwise $H^{\mu}(\Omega)$ denotes the fractional Sobolev spaces with the norms given in \cite{Ervin:06, Wang:14}. $H^{\mu}_0(\Omega)$ is the completion of $C_0^{\infty}(\Omega)$ w.r.t. the norm $\|\cdot\|_{H^{\mu}(\Omega)}$, and $H^{-\mu}(\Omega)$ denotes its dual space.

\begin{lemma}\label{lemma:2.1}
Suppose that $0<\gamma<1$ and $v(t)\in H^1(0,T)$. Then
\begin{equation} \label{eq:2.1}
{}^SD_{*,t}^{\gamma}v(t)={}^SD_t^{\gamma}\Big[v(t)-e^{-pU t}v(0)\Big]={}^SD_t^{\gamma}v(t)-\frac{v(0)e^{-pU t}}{t^{\gamma}\Gamma(1-\gamma)}.
\end{equation}
Moreover, if there exists $m\in \mathcal{N}^+$, such that $v(t)$ is $m+1$-times differentiable,  and denotes $D_t^m=\left(\frac{d}{dt}+pU\right)^m$, then
\begin{equation} \label{eq:2.2}
v(t)=\sum_{k=0}^{m}\frac{e^{-pU (t-a)}(t-a)^{k}}{\Gamma(k+1)}(D_t^kv(t))\Big|_{t=a}+e^{-pU t}{}_aD_t^{-(m+1),\,0}\,\frac{d^{m+1}\left(e^{pU t}v(t)\right)}{d t}.
\end{equation}
\end{lemma}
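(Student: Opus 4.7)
The strategy is to reduce both identities to their classical untempered counterparts via the substitution $w(t)=e^{pU t}v(t)$, which converts the fractional substantial derivatives into ordinary Riemann--Liouville and Caputo derivatives of $w$. A short induction on $k$ gives $D_t^k v(t)=\bigl(\tfrac{d}{dt}+pU\bigr)^k v(t)=e^{-pU t}w^{(k)}(t)$, while inspection of the integral formulas displayed just before the lemma shows that
\[
{}^SD_{*,t}^{\gamma}v(t)=\frac{e^{-pU t}}{\Gamma(1-\gamma)}\int_0^t\frac{w'(s)\,ds}{(t-s)^{\gamma}},\qquad {}^SD_t^{\gamma}v(t)=\frac{e^{-pU t}}{\Gamma(1-\gamma)}\frac{d}{dt}\int_0^t\frac{w(s)\,ds}{(t-s)^{\gamma}},
\]
so the substantial derivatives are precisely $e^{-pU t}$ times the Caputo/Riemann--Liouville derivatives of $w$ of order $\gamma$.

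For the equality chain in \eqref{eq:2.1} I would first differentiate under the integral in the expression for ${}^SD_t^{\gamma}v(t)$. After the change of variable $\tau=t-s$ the integral becomes $\int_0^t\tau^{-\gamma}w(t-\tau)\,d\tau$, and the Leibniz rule produces the boundary contribution $t^{-\gamma}w(0)=t^{-\gamma}v(0)$ together with $\int_0^t\tau^{-\gamma}w'(t-\tau)\,d\tau$. Reverting to the $s$-variable and multiplying by $e^{-pU t}/\Gamma(1-\gamma)$ yields the second equality in \eqref{eq:2.1}. The first equality then follows from the linearity of ${}^SD_t^{\gamma}$ once one verifies that ${}^SD_t^{\gamma}\bigl[e^{-pU t}v(0)\bigr]=\dfrac{e^{-pU t}v(0)}{t^{\gamma}\Gamma(1-\gamma)}$; this is immediate because the exponential weight cancels $e^{pUs}$ inside the integral, leaving $\int_0^t(t-s)^{-\gamma}\,ds=t^{1-\gamma}/(1-\gamma)$ and a single application of $d/dt$.

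For the Taylor-type representation \eqref{eq:2.2}, the same substitution $w(t)=e^{pU t}v(t)$ converts each summand on the right, via $D_t^k v(a)=e^{-pU a}w^{(k)}(a)$, into $\dfrac{e^{-pU t}(t-a)^k}{k!}w^{(k)}(a)$, and converts the remainder into $e^{-pU t}\cdot\dfrac{1}{m!}\int_a^t(t-s)^{m}w^{(m+1)}(s)\,ds$, using the definition of the tempered Riemann--Liouville integral with $\rho=0$. Cancelling the common factor $e^{-pU t}$ reduces the identity to the classical Taylor expansion of $w$ with integral remainder, which holds under the given smoothness assumption.

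The only delicate point is justifying the Leibniz differentiation at the singular endpoint $s=t$ in the derivation of \eqref{eq:2.1}. The hypothesis $v\in H^1(0,T)$ (hence $w\in H^1(0,T)$) is used exactly here: since $\tau^{-\gamma}$ is locally integrable for $\gamma<1$ and $w'\in L^2(0,T)$, the rearranged integral $\int_0^t\tau^{-\gamma}w'(t-\tau)\,d\tau$ is absolutely convergent and depends continuously on $t$, which legitimises the interchange of $d/dt$ and $\int$ by a standard approximation argument (first for smooth $w$, then by density). Once this is in place, every remaining step is purely algebraic.
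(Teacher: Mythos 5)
Your proof is correct and follows essentially the same route as the paper: both reduce the substantial derivatives to the classical Caputo/Riemann--Liouville derivatives of $w(t)=e^{pU t}v(t)$ (the paper invokes ${}_0D_{*,t}^{\gamma,0}(e^{pU t}v)={}_0D_t^{\gamma,0}(e^{pU t}v-v(0))$ directly, while you re-derive this relation by Leibniz differentiation at the singular endpoint), and both obtain \eqref{eq:2.2} from the Taylor expansion of $e^{pU t}v(t)$ at $t=a$ together with $\frac{d}{dt}(e^{pU t}v)=e^{pU t}(\frac{d}{dt}+pU)v$. Your version simply supplies more of the intermediate detail that the paper leaves implicit.
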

\begin{proof}
Noting that
\[{}_0D_{*,t}^{\gamma,0}(e^{pU t}v(t))={}_0D_{*,t}^{\gamma,0}(e^{pU t}v(t)-v(0))={}_0D_t^{\gamma,0}(e^{pU t}v(t)-v(0)),\]
then (\ref{eq:2.1}) is obtained.
Using the Taylor expansion of $e^{pU t}v(t)$ at $ t=a$
and applying
\begin{equation}
\frac{d}{d t}\left(e^{pU t}v(t)\right)=e^{pU t}\left(\frac{d}{dt}+pU\right)v(t),
\end{equation}
arrives at (\ref{eq:2.2}).
\end{proof}

From Lemma \ref{lemma:2.1}, it can be noted that it is  $e^{-pU t} v(0)$ rather than $v(0)$ that lies in the kernel space of the Caputo fractional substantial operator. And a more general conclusion can be stated as \cite{Chen:15}: if $r\ge 0$, $ n=\left\lceil\gamma \right\rceil$, and $v$ possesses $(n-1)$-th derivative at $ 0$, then there exists
\begin{equation}
{}^SD_{*,t}^{\gamma}v(t)={}^SD^{\gamma}_t\Big[v(t)-T_{n-1}[v;0]\Big],\quad T_{n-1}[v;0]=\sum\limits_{k=0}^{n-1}\frac{e^{-pU t}t^{k}}{\Gamma(k+1)}(D_t^kv(t))\Big|_{t=0_+}.
\end{equation}

\begin{lemma} [see \cite{Li:10}] \label{lemma:2.2}
If $0<\gamma<1/2, v\in L^2(0,T)$, or if $1/2\le\gamma <1,\, v\in H^s(0,T), \gamma-1/2<s<1/2$,  then it holds that
\begin{equation}
{}_0D_t^{-(1-\gamma),0}v(t)\Big|_{t=0_+}=0.
\end{equation}
\end{lemma}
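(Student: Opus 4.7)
The plan is to split the argument by the two regimes since the kernel $(t-s)^{-\gamma}$ in
\[
{}_0D_t^{-(1-\gamma),0}v(t)=\frac{1}{\Gamma(1-\gamma)}\int_0^t v(s)(t-s)^{-\gamma}\,ds
\]
is square-integrable in $s$ on $(0,t)$ precisely when $\gamma<1/2$; this is the threshold where the extra Sobolev regularity in the second case becomes essential.

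For $0<\gamma<1/2$ and $v\in L^2(0,T)$, I would argue by direct estimation. Cauchy--Schwarz gives
\[
\bigl|{}_0D_t^{-(1-\gamma),0}v(t)\bigr|\le\frac{1}{\Gamma(1-\gamma)}\|v\|_{L^2(0,t)}\left(\int_0^t (t-s)^{-2\gamma}\,ds\right)^{1/2}=\frac{t^{1/2-\gamma}}{\Gamma(1-\gamma)\sqrt{1-2\gamma}}\,\|v\|_{L^2(0,t)}.
\]
Since $\gamma<1/2$, the prefactor $t^{1/2-\gamma}\to 0$ as $t\to 0_+$, and $\|v\|_{L^2(0,t)}\to 0$ by absolute continuity of the Lebesgue integral, so the conclusion follows.

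For $1/2\le\gamma<1$ and $v\in H^s(0,T)$ with $\gamma-1/2<s<1/2$, direct estimation fails and I would instead invoke the mapping properties of the fractional integral. Because $s<1/2$, the space $H^s(0,T)$ coincides (with equivalent norms) with the zero-extension space $\widetilde H^s(0,T)$, so after extending by zero outside $(0,T)$ the operator ${}_0D_t^{-(1-\gamma),0}$ is a convolution whose Fourier symbol is comparable to $|\omega|^{-(1-\gamma)}$. This yields the continuous mapping
\[
{}_0D_t^{-(1-\gamma),0}\colon H^s(0,T)\longrightarrow H^{s+1-\gamma}(0,T).
\]
By the hypothesis $s>\gamma-1/2$, one has $s+1-\gamma>1/2$, so the Sobolev embedding $H^{s+1-\gamma}(0,T)\hookrightarrow C[0,T]$ applies and evaluation at $t=0$ is a continuous linear functional on $H^s(0,T)$. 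For $v\in C_0^\infty(0,T)$ the integral visibly vanishes at $t=0$, and density of $C_0^\infty(0,T)$ in $H^s(0,T)$, which holds precisely because $s<1/2$, extends the identity to all admissible $v$.

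The main obstacle is the second case: one must correctly set up the mapping ${}_0D_t^{-(1-\gamma),0}\colon H^s(0,T)\to H^{s+1-\gamma}(0,T)$ on a bounded interval, which hinges on the delicate identification $H^s(0,T)=\widetilde H^s(0,T)$ for $s<1/2$ and on the sharp Sobolev embedding into $C[0,T]$; both rely essentially on the strict bounds $\gamma-1/2<s<1/2$ built into the hypotheses, which is why the statement is sharp in $s$.
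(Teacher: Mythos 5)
Your argument is correct. Note that the paper itself offers no proof of this lemma --- it is quoted directly from the cited reference \cite{Li:10} --- and your two-case strategy (Cauchy--Schwarz giving the explicit bound $t^{1/2-\gamma}\|v\|_{L^2(0,t)}/\bigl(\Gamma(1-\gamma)\sqrt{1-2\gamma}\bigr)$ when $\gamma<1/2$; the mapping property ${}_0D_t^{-(1-\gamma),0}\colon \widetilde H^s(0,T)\to H^{s+1-\gamma}(0,T)$ combined with $H^s(0,T)=\widetilde H^s(0,T)$ for $s<1/2$, the embedding $H^{s+1-\gamma}\hookrightarrow C[0,T]$ since $s+1-\gamma>1/2$, and density of $C_0^\infty(0,T)$ when $\gamma\ge 1/2$) is essentially the standard proof found in that source, with the hypotheses used exactly where they are needed.
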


By Lemma \ref{lemma:2.2}, for $0<\gamma<1$, it is easy to find that
\begin{eqnarray} \label{eq:Laplace}
&&\mathbb{L}\left\{{}^SD_t^{\gamma}v(t);s\right\}=\left(s+pU\right)^{\gamma} \mathbb{L}\left\{v(t);s\right\},\\ [4pt]
&&\mathbb{L}\left\{{}^SD_{*,t}^{\gamma}v(t);s\right\}=\left(s+pU\right)^{\gamma} \mathbb{L}\left\{v(t);s\right\}-\left(s+pU\right)^{\gamma-1}v(0),
\end{eqnarray}
where $\mathbb{L}\{k(t),s\} $ denotes the Laplace transform of $k(t)$, i.e.,
\begin{equation}
\mathbb{L}\left\{k(t),s\right\}:=\hat{k}(s)=\int_0^\infty e^{-st}k(t)\,\mathrm{d}t \quad \mbox{for} \,\,\Re \left(s\right)>0.
\end{equation}

With a first look at the Caputo fractional substantial derivative (\ref{eq:caputosubstential}), it is an integro-differential operator. And an intuitive idea is to use the product integral rule (PI) to get a numerical approximation.  Let $0=t_0<\cdots<t_{n-1}<\cdots<t_N=T$  and rewrite the Caputo fractional substantial derivative as
\begin{equation}
{}^SD_{*,t}^{\gamma}v(t_n)=\frac{e^{-pU t_n}}{\Gamma(1-\gamma)}\sum_{j=0}^{n-1}\int_{t_j}^{t_{j+1}}(t_n-s)^{-\gamma}\frac{d(e^{pU s}v(s))}{ds}\,\mathrm{d}s.
\end{equation}
Replacing $e^{pU s}v(s)$ in each time subinterval $[t_j,t_{j+1}]$ by its first-degree polynomial interpolation
\begin{equation}
T_j(s)=e^{pU t_{j+1}}v(t_{j+1})+\frac{s-t_{j+1}}{\tau_j}\left(e^{pU t_{j+1}}v(t_{j+1})-e^{pU t_j}v(t_j)\right) \quad {\rm with ~} \tau_j=t_{j+1}-t_j,
\end{equation}
to produce the discrete form
\begin{equation}\label{eq:PI}
{}^SD_{*,t}^{\gamma}v(t_n)=\Gamma(1-\gamma)^{-1}\sum_{j=0}^{n}e^{pU(t_j-t_n)}k^{\gamma}_{n-j}v(t_j)+R^n,
\end{equation}
where
 \begin{eqnarray}
&&\quad k^{\gamma}_0=\int_{t_{n-1}}^{t_n}(t_n-s)^{-\gamma}\tau_{n-1}^{-1}ds,\quad k^{\gamma}_n=-\int_{t_0}^{t_{1}}(t_n-s)^{-\gamma}\tau_0^{-1}ds,\\[3pt]\nonumber
&&\quad k^{\gamma}_j= \int_{t_{n-j-1}}^{t_{n-j}}(t_n-s)^{-\gamma}\tau_{n-j-1}^{-1}ds-\int_{t_{n-j}}^{t_{n-j+1}}(t_n-s)^{-\gamma}\tau_{n-j}^{-1}ds,~ {\rm for}~ j=1,\cdots,n-1,\label{eq:xishu}
\end{eqnarray}
and it holds that
\begin{eqnarray} \label{eq:PIError}
\left|R^n\right|&\le&\frac{e^{-pU t_n}}{\Gamma(1-\gamma)}\sum_{j=0}^{n-1}\left|\int_{t_j}^{t_{j+1}}\frac{d(e^{pU s}v(s)-T_j(s))}{ds}\frac{\mathrm{d}s}{(t_n-s)^{\gamma}}\right|\\ \nonumber
   &\le&\frac{C}{\Gamma(1-\gamma)}\left\{\gamma\sum_{j=0}^{n-2}\int_{t_j}^{t_{j+1}}\frac{(s-t_j)(t_{j+1}-s)ds}{{(t_n-s)^{\gamma+1}}}+
\int_{t_{n-1}}^{t_n}\frac{2s-t_{n-1}-t_n}{(t_n-s)^{\gamma}}ds\right\}\\\nonumber
&\le&\frac{C\gamma}{\Gamma(1-\gamma)}\sum_{j=0}^{n-2}\tau_j^2\int_{t_j}^{t_{j+1}}\frac{ds}{{(t_n-s)^{\gamma+1}}}+\frac{2C}{\Gamma(2-\gamma)}\int_{t_{n-1}}^{t_n}\frac{ds}{{(t_n-s)^{\gamma-1}}}\\ \nonumber
&\le&\frac{C\gamma}{\Gamma(1-\gamma)}\sum_{j=0}^{n-2}\tau_j^2\int_{t_j}^{t_{j+1}}\frac{ds}{{(t_n-s)^{\gamma+1}}}+\frac{2C}{\Gamma(3-\gamma)}\tau_{n-1}^{2-\gamma},\\ \nonumber
\end{eqnarray}
 with $C=\max\limits_{0\le t\le t_n}\left|e^{pU (t-t_n)}D_t^2v(t) \right|$. Obviously, $\left|R^n\right|=\mathcal{O}(\tau^{2-\gamma})$ for $\tau_j=\tau$.

With a further look at the Caputo fractional substantial derivative (\ref{eq:caputosubstential}), it can also be seen as a convolution integral given as the following form
\begin{equation}
 e^{-pU t}\left(k*g\right)=e^{-pU t}\int_0^tk(t-z)g(z)dz,
\end{equation}
with $k(t)=\frac{t^{-\gamma}}{\Gamma(1-\gamma)}$ and $g(t)=\frac{d(e^{pU t}v(t))}{dt}$. So the discrete convolution developed in \cite{Lubich:88,Lubich:04}
 may be used to get a type of approximation. Recalling the derivation process of the PI scheme and the structure of $k_j^{\gamma}$, we can just replace the integral weights of the intervals $[t_j,t_{j+1}], \,j=0,1,\cdots,n-1$ by the discrete convolution weights, such as the first order FBDF weights $\pi_j$ determined by the generating function  $\sum_{j=0}^{\infty}\pi^{\gamma-1}_j\xi^j=\hat{k}\left(\frac{1-\xi}{\tau}\right)$ to produce an approximation also with form (\ref{eq:PI}), but now
\begin{eqnarray}
&&\Gamma(1-\gamma)^{-1}k^{\gamma}_0=\tau^{-1}\pi^{\gamma-1}_0,\quad \Gamma(1-\gamma)^{-1}k^{\gamma}_n=-\tau^{-1}\pi^{\gamma-1}_{n-1},\\[4pt]\nonumber
&&\Gamma(1-\gamma)^{-1}k^{\gamma}_j=\tau^{-1}(\pi^{\gamma-1}_{j}-\pi^{\gamma-1}_{j-1}),\quad j=1,2,\cdots,n-1.
\end{eqnarray}
Of course the uniform stepsize $\tau$ must be used.  Now let us determine the local truncation error. In fact, from
\begin{equation}
\left(\frac{1-\xi}{\tau}\right)^{\gamma}=(1-\xi)\tau^{-1}\left(\frac{1-\xi}{\tau}\right)^{\gamma-1},
\end{equation}
one has
\begin{equation}
\sum_{j=0}^{+\infty}w_j^{\gamma}\xi^j=(1-\xi)\tau^{-1}\sum_{j=0}^{+\infty}\pi_j^{\gamma-1}\xi^j=\tau^{-1}\pi_0^{\gamma-1}+\sum_{j=1}^{+\infty}\tau^{-1}(\pi_j^{\gamma-1}-\pi_{j-1}^{\gamma-1})\xi^j,
\end{equation}
which actually means  that $\tau^{-1}\pi_j^{\gamma-1}=\sum_{l=0}^{j}w_l^{\gamma}$ for $j=0,1,\cdots$. Hence
\begin{eqnarray}\label{eq:FLMMs}
{}^SD_{*,t}^{\gamma}v(t_n)&=&\Gamma(1-\gamma)^{-1}\sum_{j=0}^{n}k^{\gamma}_{n-j}v(t_j)+R^n\\\nonumber
&=&\sum_{j=1}^{n}w_{n-j}^{\gamma}e^{-pU(n-j)\tau}v(t_j)-e^{-pU n\tau}\sum_{l=0}^{n-1}w_l^{\gamma}v(0)+R^n\\\nonumber
&=&\sum_{j=0}^n w_{n-j}^{\gamma}e^{-pU(n-j)\tau}\left(v(t_j)-e^{-pU j\tau}v(0)\right)+R^n\\
&=&e^{-pU t_n}\sum_{j=0}^n w_{n-j}^{\gamma}\left(e^{pU t_j}v(t_j)-v(0)\right)+R^n.\nonumber
\end{eqnarray}
Noting that $e^{pU t}v(t)-v(0)\big|_{t=0}=0$, by the well known Gr\"unwald-Letnikov discrete approximation to the  Riemann-Liouville operators \cite{Meerschaert:04} and Lemma  \ref{lemma:2.1}, one has
\begin{equation}
R^n={}^SD_{*,t}^{\gamma}v(t_n)-e^{-pU t_n}\sum_{j=0}^n w_{n-j}^{\gamma}\left(e^{pU t_j}v(t_j)-v(0)\right)=\mathcal{O}(\tau).
\end{equation}

For  space discretization, let $[x_0,\ldots,x_d]f$ denote the $d$-th order {\em divided difference} of $f$ at the points $x_0,\ldots, x_d$, $t_+^l:=(\max\{0,t\})^l$, and choose the Schoenberg sequence of knots $\boldsymbol{t}^j$ as
  \begin{equation}
	{\boldsymbol{t}^j}:=\{\underbrace{0,\ldots,0}_{d}, 2^{-j},\ldots,1-2^{-j},\underbrace{1,\ldots,1}_{d}\},
	\end{equation}
to define the scaling function sets $\Phi_j=\Big\{ \phi_{j,k}, k\in\triangle_j=\left\{1,\ldots,2^j+d-3\right\}\Big\}$, where
	\begin{equation}\label{scale_base}
	\phi_{j,k}(x):=2^{\frac{j}{2}}(t_{k+d+1}^j-t_{k+1}^j)[t_{k+1}^j,\ldots,t_{k+d+1}^j](t-x)_+^{d-1}.
	\end{equation}
Then  $S_j={\rm span}\{\Phi_j\}$ form a multiresolution analysis (MRA) of $L_2(I),\,I=(0,1)$. The system $\Phi_j$ is uniformly local and locally finite, i.e., ${\rm diam} ({\rm supp}\phi_{j,k})\stackrel{<}{\sim}2^{-j}$ and $ \#\{\phi_{j,k}: {\rm supp}\phi_{j,k}\cap {\rm supp}\phi_{j,i}\}\stackrel{<}{\sim}1$; it forms a stable Riesz basis of $S_j$, i.e.,
	\begin{equation}
 c_{\Phi}\|{\boldsymbol{c}_j}\|_{l_2(\triangle_j)}\le\Big\|\sum_{\lambda\in\Delta_j}c_{j,k}\phi_{j,k}\Big\|_{L_2(\Omega)}
    \le C_{\Phi}\|{\boldsymbol{c}_j}\|_{l_2(\triangle_j)};
    \end{equation}
and $S_j$ satisfies the Jackson and Bernstein estimates, i.e.,
	\begin{eqnarray} \label{eq:Jackson}
	  & &\inf_{v_j\in S_j}\left\|v-v_j\right\|_{L_2(\Omega)}\stackrel{<}{\sim}2^{-jd}\left\|v\right\|_{\mathcal{H}^d(I)} \quad \forall v\in \mathcal{H}_0^d(I),\\
	  & &\left\|v_j\right\|_{\mathcal{H}^s(\Omega)}\stackrel{<}{\sim}2^{js}\left\|v_j\right\|_{L_2(I)}, \quad \forall  v_j\in S_j,\ 0\le s\le \gamma,
	\end{eqnarray}
where $\gamma:=\sup\{\nu\in \mathcal{R}:v_j\in \mathcal{H}^\nu(I),~~ \forall v_j\in S_j\}$ and by $A\stackrel{<}{\sim}B$ we mean that $A$ can be bounded by a multiple of $B$, independent of the parameters they may depend on.

By the properties of the Riesz basis, there exists a dual MRA  sequence $\{\tilde{S}_j\} $ with $\tilde{S}_j={\rm span}\{\widetilde{\Phi}_j\}$ and the corresponding biorthogonal wavelets $\Psi_j=\{\psi_{j,k},k\in \nabla_j\}$. The wavelet can well characterize the space (norm equivalence) \cite{Cohen:00, Dahmen:97,Primbs:06}:
there exists $\tilde{\sigma}, \sigma >0$ s.t. for $ \mathcal{H}_0^s(I), s\in(-\tilde{\sigma},\sigma)$:
\begin{equation} \label{normequiv}
	\quad\Big\|\sum_{j\ge J_0-1}\sum_{k\in\nabla_j}d_{j,k}\psi_{j,k}\Big\|_{\mathcal{H}^s(I)}^2\sim
	                                  \sum_{j\ge J_0-1}\sum_{k\in \nabla_{j}}2^{2js}\big|d_{j,k}\big|^2,
\end{equation}
where $\psi_{J_0-1,k}:=\phi_{J_0,k}, \nabla_{J_0-1}:=\Delta_{J_0}, d_{J_0-1,k}:=c_{J_0,k}$, $J_0$ denotes the lowest level.
The range of $s$ is determined by the basic properties (\ref{eq:Jackson}). It  means that $\bigcup_{j=J_0-1}^\infty2^{-js}\Psi_j$ is a Riesz basis of $\mathcal{H}_0^s(I)$. In addition, there exist refinement matrixes $M_{j,0}$ and $M_{j,1}$ satisfying
 \begin{equation}\label{Refine-raltion}
	\Phi_j^T=\Phi_{j+1}^TM_{j,0}, \quad \Psi_j^T=\Phi_{j+1}^TM_{j,1};
	\end{equation}
 and for any $u_J\in S_J$,
	\begin{equation}\label{eq:2.15}
	u_J=\sum_{k\in \triangle_J}c_{J,k}\phi_{J,k}=\sum_{k\in\triangle_{J_0}}c_{J_0,k}\phi_{J_0,k}+\sum_{j=J_0}^{J-1}\sum_{k\in\nabla_j}d_{j,k}\psi_{j,k}.
	\end{equation}
Using the FWT, the overall computation complexity is $\mathcal{O}(2^J)$ for the transition of coefficients, viz,
if denote ${\boldsymbol {\mathrm c}}_j=(c_{j,k})_{k\in \triangle_j}$, $ {\boldsymbol{\mathrm d}_j}=(d_{j,k})_{k\in\nabla_j}$
 and $ {\boldsymbol d}_J=({\boldsymbol{\mathrm c}_{J_0},{\mathrm d}_{J_0},\ldots,{\mathrm d}_{J-1}})$, then
\begin{equation}\label{eq:2.16}
{\boldsymbol c}_J=M{\boldsymbol d}_J,
\end{equation}
where
	\begin{equation}\label{eq:2.21}
	M=\left(\begin{array}{cc}M_{J-1}&0\\ 0&I_{J-1}\end{array}\right)\left(\begin{array}{cc}M_{J-2}&0\\0&I_{J-2}\end{array}\right)\cdots
	                                                                 \left(\begin{array}{cc}M_{J_0}&0\\0&I_{J_0}\end{array}\right),
	\end{equation}
with $I_j, j=J_0,\cdots,J-1$, being the identity matrix, and $M_j=(M_{j,0},M_{j,1})$.
For more details, refer to  \cite{Cohen:00,Primbs:06,Urban:09,Zhang:15}.

Finally, we point out that there exists a biorthogonal projector:
 \begin{equation}
   \Pi_j: L_2(\Omega)\to S_j, \qquad \Pi_ju:=\sum_{k\in\triangle_{j}}\left(u,\tilde{\phi}_{j,k}\right)\phi_{j,k},
   \end{equation}
which satisfies $\Pi_{j+1}\Pi_j=\Pi_j\Pi_{j+1}=\Pi_j$.  For every $u\in H_0^{s}(I)\cap H^{\gamma}(I)$, it holds that
\begin{equation}\label{project}
  \left\|u-\Pi_ju\right\|_{\mathcal{H}^s(I)}\stackrel{<}{\sim}2^{j(s-r)}\left\|u\right\|_{\mathcal{H}^\gamma(I)},\quad 0\le s<r\leq d.
\end{equation}
The scaling and wavelet functions in $\Omega$ can be obtained through an affine map, and they can also be extended  to high dimensional cases by tensor product, keeping the same properties as the one dimensional version.

Define the bilinear form $B(\cdot,\cdot): H^{\frac{\alpha}{2}}_0(\Omega)\times H^{\frac{\alpha}{2}}_0(\Omega)\to R$ as
\begin{equation}
B(u,v):=\frac{\left({}_aD_x^{\frac{\alpha}{2},0}u,{}_xD_b^{\frac{\alpha}{2},0}v\right)+\left({}_xD_b^{\frac{\alpha}{2},0}u,{}_aD_x^{\frac{\alpha}{2},0}v\right)}{2\cos(\alpha\pi/2)},
\end{equation}
where ${}_xD_1^{\frac{\alpha}{2},\rho}v$ is the right tempered Riemann-Liouville fractional derivative given by \begin{eqnarray}
{}_xD_b^{\frac{\alpha}{2},\rho}v(x)=\frac{-e^{\rho x}}{\Gamma(1-\alpha/2)}\frac{d}{dx}\int_x^b  e^{\rho \xi}v(\xi)\frac{d\xi}{(\xi-x)^{\alpha/2}}.
\end{eqnarray}
And $B(\cdot,\cdot)$ is continuous and coercive \cite{Ervin:07}, i.e., there exist constants $ C_1$ and $C_2$ such that
\begin{equation}\label{eq:coercive}
	   |B(u,v)|\le C_1\|u\|_{H^\frac{\alpha}{2}(\Omega)}\|v\|_{H^\frac{\alpha}{2}(\Omega)}, \quad C_2\|u\|_{H^\frac{\alpha}{2}(\Omega)}^2\le  B(u,u).
	\end{equation}
If for every $v\in S_J\subset H^{\frac{\alpha}{2}}_0(\Omega)$, $ \Lambda u\in S_J$ satifies $B(u-\Lambda u,v)=0$, then it follows that \cite {Ervin:06,Zhang:15}
\begin{equation}
\left\|u-\Lambda u\right\|_{H^\frac{\alpha}{2}(\Omega)}\stackrel{<}{\sim }\left\|u-\Pi_ju\right\|_{H^{\frac{\alpha}{2}}(\Omega)}\stackrel{<}{\sim } 2^{J(\alpha/2-r)}\left\|u\right\|_{H^{r}(\Omega)}.
\end{equation}

\section{Numerical schemes and related analyses}
Based on the model (\ref{eq:subs-2}) itself and its equivalent form, we derive two numerical schemes and perform the theoretical analysis for their semi and full discrete forms. We rewrite (\ref{eq:subs-2}) as
\begin{equation}\label{eq:model1}
\left\{
\begin{array}{ll}
\frac{\partial}{\partial t}G_{x}(p,t)=K_{\gamma,\alpha}\,{}^SD_t^{1-\gamma}\,\nabla_{x}^{\alpha}G_{x}(p,t)-pU(x)G_{x}(p,t),&x\in \Omega,\,0<t\le T,\\[5pt]
~ G_x(p,0)=g(x,p), & x\in \Omega,
\end{array}
\right.
\end{equation}
subjecting to homogeneous boundary conditions.

\subsection{Numerical schemes $\mathrm{I}$} 
For making a more general discussion, we add a force term $f(x,p,t)$ at the right hand side of (\ref{eq:model1}); of course, $f$ can be zero.
The weak form of the model is
\begin{equation}\label{eq:3.1}
\left(D^1_t(G_{x}(p,t)), v\right)=\left\langle K_{\gamma,\alpha}\, {}^SD_t^{1-\gamma}\,\nabla_{x}^{\alpha} G_x(p,t),v\right\rangle+\left(f,v\right),
\end{equation}
where $v$ belongs to appropriate space. Denote $G_J$ and $G^n_J$ as the approximation of $G_{x}(p,t)$ and $G_{x}(p,t_n)$, respectively, in the space $S_J$. Let $\mathcal{L}_1G_J^n$ and $\mathcal{L}_2G_J^n$ be the discretization of $
D_t^1 (G_J) \big|_{t=t_n}$ and ${}^SD_t^{1-\gamma}\,G_J$, respectively, viz, for the FBDF time discretization,
\begin{equation}\label{eq:L11}
\mathcal{L}_1G_J^n=(G_J^n-e^{-pU \tau}G_J^{n-1})/\tau, \quad \mathcal{L}_2G_J^n= e^{-pU n\tau}\sum_{j=0}^n w_{n-j}^{1-\gamma}\left(e^{pU j\tau}G_J^j-G_J^0\right)+\frac{e^{-pUn\tau}G_J^0}{t_n^{1-\gamma}\Gamma(\gamma)};
\end{equation}
and for  the PI time discretization,
\begin{eqnarray}\label{eq:L1-1}
&&\mathcal{L}_1G_J^n=\left\{\begin{array}{ll}(G_J^n-e^{-pU \tau}G_J^{n-1})/\tau, &n=1,\\[5pt]
                                 (\frac{3}{2}G_J^n-2e^{-pU \tau}G_J^{n-1}+\frac{1}{2}e^{-2pU \tau}G_J^{n-2})/\tau,& n\ge 2,\end{array} \right.\,\,
                                 \\
&&\mathcal{L}_2G_J^n=\frac{1}{\Gamma(\gamma)}\sum_{j=0}^{n}e^{pU(j-n)\tau}k^{1-\gamma}_{n-j}G_J^j+\frac{e^{-pUn\tau}G_J^0}{t_n^{1-\gamma}\Gamma(\gamma)}, \label{eq:L1-21} \end{eqnarray}
where $\mathcal{L}_2G_J^n$ comes from $(\ref{eq:FLMMs})$ or $(\ref{eq:PI})$, and Lemma \ref{lemma:2.1}.
And from Lemma \ref{lemma:2.1}, it is easy to check that
\begin{equation}\label{eq:L1-2}
D_t^1 (G_J)\Big|_{t=t_n}=\left\{\begin{array}{l}(G_J^n-e^{-pU \tau}G_J^{n-1})/\tau+\mathcal{O}(\tau),\\ [5pt]
                                   (\frac{3}{2}G_J^n-2e^{-pU \tau}G_J^{n-1}+\frac{1}{2}e^{-2pU \tau}G_J^{n-2})/\tau+\mathcal{O}(\tau^2).\end{array}\right.
\end{equation}
 Now the FBDF and PI MGM fully discrete schemes can be, respectively, given as: find $G^n_J\in S_J$, such that for any $v\in S_J$ there is, respectively,
 \begin{equation}\label{eq:weak1}
 (\mathcal{L}_1G^n_J,v)+K_{\gamma,\alpha} \sum_{j=0}^n w_{n-j}^{1-\gamma}\left[B\left(G_J^j,e^{pU(j-n)\tau}v\right)-B\left(G_J^0, e^{-pU n\tau}v\right)\right]+\frac{B\left(G_J^0, e^{-pUn\tau}v\right)}{(t_n)^{1-\gamma}\Gamma(\gamma)}=\left(f^n,v\right),
 \end{equation}
 and
\begin{equation}\label{eq:weak2}
 (\mathcal{L}_1G^n_J,v)+\Gamma(\gamma)^{-1}\sum_{j=0}^{n}B\left(k^{1-\gamma}_{n-j}G_J^j,\,e^{pU(j-n)\tau}v\right)+\frac{B\left(G_J^0, e^{-pUn\tau}v\right)}{(t_n)^{1-\gamma}\Gamma(\gamma)}=\left(f^n,v\right),
 \end{equation}
where $f^n=f(x,p,t_n)$.

If replace $D^1_t(G_x)$, ${}^SD_{t}^{1-\gamma}G_x$ and $f$ in (\ref{eq:3.1}) by $\mathcal{L}_1G_x^n$, $\mathcal{L}_2G_x^n$ and $f^n$ respectively, then finding $G_x^n\in H^{\frac{\alpha}{2}}_0(\Omega)$  s.t. (\ref{eq:3.1}) holds for any $ v \in H^{\frac{\alpha}{2}}_0(\Omega)$ yields the time semi discrete scheme. Similarly one can get the space semi discrete scheme by replacing $G$ in (\ref{eq:3.1}) with $G_J (\in S_J)$ s.t. (\ref{eq:3.1}) holds for any $v\in S_J$.

In the following analysis of this subsection, we make the assumption that $pU$ is a nonnegative real number and unify the forms of (\ref{eq:weak1}) and (\ref{eq:weak2}) as:
find $G^n_J\in S_J$ such that
\begin{equation} \label{eq:fulldiscrete}
(\mathcal{L}_1G^n_J,v)+K_{\gamma,\alpha}B(\mathcal{L}_2G^n_J,v)=\left(f^n,v\right)\quad \forall v\in S_J.
\end{equation}
 For $0<\gamma<1$,  similar to the methods used in \cite{Ervin:06,Mclean:12}, after extending the real valued $k(t)$ by zero outside the interval $[0,T]$, then from Eq. (\ref{eq:Laplace}) and the Plancherel theorem,  it is easy to check that
\begin{eqnarray}\label{eq:Positive}
&& \int_0^T \left({}^SD_t^{1-\gamma} k(t)\right)\,k(t)\,dt=\frac{1}{2\pi}\int_{-\infty}^{\infty}(pU+is)^{1-\gamma}\hat{k}(is)\hat{k}(-is)ds\\\nonumber
&& \quad =\left\{\begin{array}{ll}
  \frac{1}{\pi}\int_0^{\infty}\cos\left((1-\gamma)\arctan\big(\frac{s}{pU}\big)\right)\big|pU+is\big|^{1-\gamma}\big|\hat{k}(is)\big|^2\,ds\ge0,& pU>0,\\[5pt]
  \frac{sin\frac{\pi\gamma}{2}}{\pi}\int_0^{\infty}s^{1-\gamma}\big|\hat{k}(is)\big|^2\,ds\ge 0,&pU=0.
  \end{array}\right.
\end{eqnarray}

\begin{lemma}\label{eq:lemma31}
Let $\{L_n^\gamma\}_{n=0}^{\infty}$ and $\{Q_n^\gamma\}_{n=0}^{\infty}$ be two sequences of nonnegative real numbers with $ L_n^\gamma\le Q_{n-1}^\gamma,n\ge 1$, and $\{Q_n^\gamma\}_{n=0}^{\infty}$ monotone decreasing, $L_{0}^\gamma:=Q_0^\gamma$, $Q_{-1}^\gamma:=0$.
Then for any positive integer $M$ and real vector $\left(V_0,V_1,\cdots,V_{M-1}\right)$ with $M$ real entries,
\begin{equation}\label{eq:lemma22_1}
\sum_{n=0}^{M-1}\left(\sum_{l=0}^{n-1}e^{-pU l\tau}\left(Q_l^\gamma-Q_{l-1}^\gamma\right)V_{n-l}+e^{-pU n\tau}\left(L_{n}^\gamma-Q_{n-1}^\gamma\right)V_0\right)V_n\ge 0.
\end{equation}
For $n\ge 1$, if we further require $Q_n^\gamma\le L_{n}^\gamma$, then
\begin{eqnarray}\label{eq:lemma22_2}
&&~\left(\sum_{l=0}^{n-1}e^{-pU l\tau}\left(Q_l^\gamma-Q_{l-1}^\gamma\right)V_{n-l}^\gamma+e^{-pU n\tau}\left(L_{n}^\gamma-Q_{n-1}^\gamma\right)V_0\right)V_n\\\nonumber
&&\ge \frac{1}{2}\sum_{l=0}^ne^{-pU l\tau}Q_l^\gamma V^2_{n-l}-\frac{1}{2}\sum_{l=0}^{n-1}e^{-pU l\tau}Q_l^\gamma  V^2_{n-1-l}+\frac{L_n^\gamma}{2}V_{n}^2.
\end{eqnarray}
\end{lemma}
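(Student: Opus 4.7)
The plan is to prove both inequalities by a discrete energy argument built on the Alikhanov identity $2ab = a^2 + b^2 - (a-b)^2$, applied to each product $V_n V_{n-l}$ and $V_n V_0$ that appears after expanding $V_n$ times the inner bracket. This splits twice the bilinear form into a diagonal part (quadratic in the individual $V_m^2$) plus a collection of squared differences $(V_n - V_{n-l})^2$ and $(V_n - V_0)^2$. The lemma's hypotheses force the coefficients of these squared differences to be non-negative, namely $-e^{-pUl\tau}(Q_l^\gamma - Q_{l-1}^\gamma) \ge 0$ for $l \ge 1$ (monotonicity of $\{Q_n^\gamma\}$) and $-e^{-pUn\tau}(L_n^\gamma - Q_{n-1}^\gamma) \ge 0$ (from $L_n^\gamma \le Q_{n-1}^\gamma$), so that piece is automatically non-negative and the analysis reduces to bounding the diagonal part.

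For (\ref{eq:lemma22_2}), the coefficient of $V_n^2$ in the diagonal part is $A_n := \sum_{l=0}^{n-1} e^{-pUl\tau}(Q_l^\gamma - Q_{l-1}^\gamma) + e^{-pUn\tau}(L_n^\gamma - Q_{n-1}^\gamma)$. A summation by parts rewrites $A_n = e^{-pUn\tau} L_n^\gamma + \sum_{l=0}^{n-1} Q_l^\gamma (e^{-pUl\tau} - e^{-pU(l+1)\tau})$, so that $A_n - L_n^\gamma = (1 - e^{-pU\tau})\sum_{l=0}^{n-1}(Q_l^\gamma - L_n^\gamma) e^{-pUl\tau} \ge 0$, using $Q_l^\gamma \ge Q_{n-1}^\gamma \ge L_n^\gamma$ for $l \le n-1$. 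Setting $F_n := \sum_{l=0}^n e^{-pUl\tau}Q_l^\gamma V_{n-l}^2$ and expanding $F_n - F_{n-1}$ by reindexing, the remaining diagonal contributions minus $(F_n - F_{n-1})/2 - (L_n^\gamma/2) V_n^2$ collapse to the residual $\sum_{l=1}^n Q_{l-1}^\gamma(e^{-pU(l-1)\tau} - e^{-pUl\tau})V_{n-l}^2 + e^{-pUn\tau}(L_n^\gamma - Q_n^\gamma) V_0^2$; both summands are non-negative by $pU \ge 0$ and the additional hypothesis $Q_n^\gamma \le L_n^\gamma$, yielding (\ref{eq:lemma22_2}).

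For (\ref{eq:lemma22_1}), sum (\ref{eq:lemma22_2}) over $n = 1, \ldots, M-1$ and add the trivial $n = 0$ contribution $V_0 \cdot L_0^\gamma V_0 = Q_0^\gamma V_0^2$; the two running $F$-sums telescope to $(F_{M-1} - F_0)/2 = F_{M-1}/2 - Q_0^\gamma V_0^2 / 2$, leaving the lower bound $S \ge (Q_0^\gamma/2) V_0^2 + F_{M-1}/2 + (1/2)\sum_{n \ge 1} L_n^\gamma V_n^2 \ge 0$. The main technical obstacle I expect is the index bookkeeping in the middle step: the correct grouping of reindexed summations only becomes transparent once the summation-by-parts form of $A_n$ is in hand. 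A secondary subtlety is that this chained derivation of (\ref{eq:lemma22_1}) goes through (\ref{eq:lemma22_2}), which assumes $Q_n^\gamma \le L_n^\gamma$, whereas (\ref{eq:lemma22_1}) is stated without that hypothesis; to cover the weaker setting one can either retain part of the squared-difference pool in the first step to absorb the $V_0^2$ residual, or decompose $V^T A V$ into its Toeplitz piece $V^T A^1 V$ (non-negative via the representation $Q_l^\gamma = Q_\infty^\gamma + \sum_{k \ge l}(Q_k^\gamma - Q_{k+1}^\gamma)$ with non-negative increments and a Cauchy--Schwarz bound on the autocorrelations $\sum_m V_{m+k+1}V_m$) plus a rank-one correction $V_0 \sum_{n \ge 1} e^{-pUn\tau}(L_n^\gamma - Q_n^\gamma)V_n$ that is dominated by the Toeplitz contribution.
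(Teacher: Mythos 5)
Your argument is correct and is essentially a careful working-out of the paper's own (one-line) proof: the paper's justification consists precisely of your ingredients --- the average value inequality $2ab\le a^2+b^2$ applied to the negatively-weighted off-diagonal products, the telescoping row-sum bound (your $A_n\ge L_n^\gamma$), and the nonnegativity of the residuals $Q_{l-1}^\gamma\bigl(e^{-pU(l-1)\tau}-e^{-pUl\tau}\bigr)$ and $e^{-pUn\tau}\bigl(L_n^\gamma-Q_n^\gamma\bigr)$. Proving (\ref{eq:lemma22_2}) first and obtaining (\ref{eq:lemma22_1}) by summing and telescoping $F_n$ is a clean way to organize the same bookkeeping.

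One concrete warning about the ``secondary subtlety'' you raise at the end: do not try to prove (\ref{eq:lemma22_1}) without the hypothesis $Q_n^\gamma\le L_n^\gamma$, because in that generality the statement is false. Take $pU=0$, $Q_n^\gamma\equiv 1$, $L_0^\gamma=1$ and $L_n^\gamma=0$ for $n\ge 1$; all the stated hypotheses of the first part hold, yet the quadratic form reduces to $V_0^2+\sum_{n\ge1}V_n^2-V_0\sum_{n\ge1}V_n$, which is negative for $M=6$ and $V=(5/2,1,1,1,1,1)$. Hence both of your proposed repairs (retaining part of the squared-difference pool, or the Toeplitz-plus-rank-one splitting) cannot succeed. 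This is a defect of the lemma's statement rather than of your proof: the paper's own argument also invokes $L_n^\gamma-Q_{n-1}^\gamma>Q_n^\gamma-Q_{n-1}^\gamma$, i.e.\ $L_n^\gamma>Q_n^\gamma$, and the only coefficients to which the lemma is applied, namely those in (\ref{eq:coefficient}), are shown to satisfy $Q_n^\gamma\le L_n^\gamma\le Q_{n-1}^\gamma$, so your chained derivation covers everything that is actually needed.
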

\begin{proof}
Note  that
\begin{eqnarray*}
&& Q_0^\gamma+ \sum_{l=1}^{n-1}e^{-pU l\tau}\left(Q_l^\gamma-Q_{l-1}^\gamma\right)+e^{-pU n\tau}\left(L_{n}^\gamma-Q_{n-1}^\gamma\right)\ge L_{n}^\gamma\ge 0, \quad e^{-pU n\tau}\left(L_{n}^\gamma-Q_{n-1}^\gamma\right)<0,\\
&& Q_0^\gamma>0, \quad e^{-pU l\tau}\left(Q_l^\gamma-Q_{l-1}^\gamma\right)<0,\quad 0<e^{-pU l\tau}\le 1,\quad L_{n}^\gamma-Q_{n-1}^\gamma>Q_n^\gamma-Q_{n-1}^\gamma.
\end{eqnarray*}
Further combining with the average value inequality leads to the desired results.
\end{proof}

Eq. (\ref{eq:lemma22_1}) can be seen as the discrete analogue of (\ref{eq:Positive}). Lemma \ref{eq:lemma31} plays important role in the following proof. When using this lemma, we take
\begin{eqnarray}\label{eq:coefficient}
Q^{\gamma}_n:=
\left\{\begin{array}{ll} \tau^{-1}\pi_n^{\gamma-1}, & for \,{\rm FBDF},\\[3pt] \frac{1}{\Gamma(1-\gamma)\tau}\int_0^\tau\frac{ds}{(t_{n+1}-s)^{\gamma}},& for\, {\rm PI},\end{array}\right.\qquad n\ge 0,\quad L_0^{\gamma}:=Q_0^{\gamma},\quad L^{\gamma}_n:=\frac{t_n^{-\gamma}}{\Gamma(1-\gamma)},\quad n\ge 1,
\end{eqnarray}
where $\sum_{k=0}^{\infty}\pi^{\gamma-1}_k\xi^k=\left(\frac{1-\xi}{\tau}\right)^{\gamma-1}$.
Since $(t_{n+1}-s)^{-\gamma}<(t_n-s)^{-\gamma}$ with $s\in (0,\tau)$ and $ (t_n-s)^{-\gamma}$ with $s\in (0,\tau)$ is a convex function, then both (\ref{eq:lemma22_1}) and (\ref{eq:lemma22_2}) hold for the PI scheme. For the FBFD scheme, since  $w^{\gamma}_j\le0$, $|w^{\gamma}_{j+1}|<|w^{\gamma}_j|<w^{\gamma}_0=1/\tau^{\gamma}, j=1,2,\cdots,\sum_{j=0}^{\infty}w^{\gamma}_j=0$, and $\tau^{-1}\pi_n^{\gamma-1}=\sum_{j=0}^{n}w_j^{\gamma}$, the sequence $Q_n^\gamma$ is postive and  decreasing with $n$. Noting that \cite{Galeone:06}
 \begin{equation}\label{eq:ssss3}
\frac{Q_{n-1}^\gamma}{Q_n^\gamma}=\frac{n}{n-\gamma}>1+\frac{\gamma}{n}\ge\left(1+\frac{1}{n}\right)^\gamma=\frac{L_n^\gamma}{L_{n+1}^\gamma}
 \end{equation}
 and
  \begin{equation}\label{eq:ssss2}
 \tau^{\gamma}Q_{n-1}^{\gamma}=\frac{n^{-\gamma}}{\Gamma(1-\gamma)}-
   \frac{(\gamma+1)n^{-1-\gamma}}{2\Gamma(-\gamma)}+\mathcal{O}(n^{-2-\gamma}),
 \end{equation}
  one has $ L^{\gamma}_{n}\le Q^{\gamma}_{n-1}$. Moreover, using the log-convexity of  Gamma function on the positive reals (Bohr-Mollerup theorem), it holds that
 \begin{eqnarray}
 \Gamma(n+1-\gamma)=\Gamma\left(\gamma n+(1-\gamma)(n+1)\right)\le\Gamma(n)^{\gamma}\Gamma(n+1)^{1-\gamma}=\Gamma(n)n^{1-\gamma},
 \end{eqnarray}
 which actually means $Q^{\gamma}_n\le L^{\gamma}_{n}$. With the definitions of $Q^{\gamma}_n$ and $L^{\gamma}_{n}$ given in (\ref{eq:coefficient}), the $\mathcal{L}_2G_J^n$  defined in (\ref{eq:L11}) and (\ref{eq:L1-21}) can be uniformly rewritten as
   \begin{equation}
 \mathcal{L}_2G_J^n=\sum_{l=0}^{n-1}e^{-pU l\tau}\left(Q^{1-\gamma}_{l}-Q^{1-\gamma}_{l-1}\right)G_J^{n-l}-e^{-pU n\tau}\left(Q^{1-\gamma}_{n-1}-L^{1-\gamma}_{n}\right)G_J^0.
 \end{equation}
  
\begin{theorem} \label{eq:theorem1}
For the space semi discrete schemes corresponding to (\ref{eq:fulldiscrete}), there exists
\begin{equation}\label{theorem1_1}
\|G_J(t)\|\le \|G_J(0)\|+2\int_0^{t}\|f\|\,\mathrm{d}t;
\end{equation}
and for the two time semi discrete schemes corresponding to (\ref{eq:fulldiscrete}), one has
\begin{eqnarray}
\|G^n\|&\le &\|g\|+2K_{\gamma,\alpha}^{\frac{1}{2}}\tau^{\gamma/2}\normmm{g}+2\tau\sum_{j=1}^n\|f^n\| \quad for\,{\rm FBDF},\\
\|G^n\|&\le &4\|g\|+\frac{5K_{\gamma,\alpha}^{\frac{1}{2}}\tau^{\gamma/2}}{\sqrt{\Gamma(\gamma+1)}}\normmm{g}+18\tau\sum_{j=1}^n\|f^n\| \quad for\,{\rm PI},
\end{eqnarray}
where $G_J(0)$ can be chosen as $\Pi g$, $g$ is the initial value, $\normmm{u}^2:=B(u,u)$.
\end{theorem}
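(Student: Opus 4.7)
The plan is to establish each of the three estimates by an energy method, testing the variational equation with the natural choice $v=G_J$ (resp.\ $v=G_J^n$) and exploiting (i)~the continuous/discrete positivity of the fractional substantial operator (Eq.~(3.14) and Lemma~3.1) and (ii)~the coercivity of $B(\cdot,\cdot)$ to drop the fractional-diffusion contribution on the left-hand side.

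For the \emph{space semi-discrete} estimate I would test the semi-discrete form of $(\mathcal{L}_1G_J,v)+K_{\gamma,\alpha}B({}^SD_t^{1-\gamma}G_J,v)=(f,v)$ with $v=G_J(t)$. Writing $D_t^1=\partial_t+pU$ and using $pU\ge 0$ gives $(D_t^1G_J,G_J)\ge \frac{1}{2}\frac{d}{dt}\|G_J\|^2$. Since $B$ is symmetric positive definite on $H_0^{\alpha/2}$ (by coercivity (\ref{eq:coercive})), diagonalising it by an $L^2$-orthonormal basis of $B$-eigenfunctions reduces $B({}^SD_t^{1-\gamma}G_J,G_J)$ to a sum of scalar terms $\lambda_k\int({}^SD_t^{1-\gamma}\hat G_{J,k})\hat G_{J,k}$; each of these is nonnegative after integration in $t$ by the Plancherel computation (\ref{eq:Positive}). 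Integrating on $(0,t)$ therefore yields $\|G_J(t)\|^2\le\|G_J(0)\|^2+2\int_0^t\|f\|\|G_J\|\,ds$, and the standard quadratic/Gronwall trick $M_t^2-2M_t\!\int\|f\|-\|G_J(0)\|^2\le 0$ with $M_t=\sup_{s\le t}\|G_J(s)\|$ converts this into (\ref{theorem1_1}).

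For the \emph{time semi-discrete FBDF} estimate I take $v=G^n$ in (\ref{eq:fulldiscrete}). For $\mathcal{L}_1$ I use $(G^n-e^{-pU\tau}G^{n-1},G^n)\ge \|G^n\|^2-\|G^{n-1}\|\|G^n\|\ge \tfrac12(\|G^n\|^2-\|G^{n-1}\|^2)$, which telescopes upon summation. For the $B$-term I rewrite $\mathcal{L}_2G^n$ in the unified form immediately above Theorem~3.1 and apply Lemma~3.1 modewise on the $B$-diagonalisation with $V_n\leftrightarrow G^n$ and $Q_n^\gamma,L_n^\gamma$ as specified in (\ref{eq:coefficient}); the verifications $L_n^{1-\gamma}\le Q_{n-1}^{1-\gamma}$ and $Q_n^{1-\gamma}\le L_n^{1-\gamma}$ already carried out in the text are the hypotheses of the lemma. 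Summing the equation for $n=1,\dots,N$, the lemma asserts that the full $n=0,\dots,N$ bilinear sum is nonnegative; hence the sum from $n=1$ exceeds $-Q_0^{1-\gamma}\normmm{G^0}^2$. Computing $Q_0^{1-\gamma}=\tau^{\gamma-1}$ for FBDF then gives
\begin{equation*}
\|G^N\|^2\le\|G^0\|^2+2K_{\gamma,\alpha}\tau^{\gamma}\normmm{G^0}^2+2\tau\sum_{n=1}^N\|f^n\|\|G^n\|,
\end{equation*}
and the same $M_N$ trick used above yields the FBDF bound, absorbing $\sqrt{2}$ into the stated constant $2$.

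The \emph{PI case} is structurally identical, with two modifications. First, for $n\ge 2$ the operator $\mathcal{L}_1$ is the BDF2 variant $(\tfrac32 G^n-2e^{-pU\tau}G^{n-1}+\tfrac12 e^{-2pU\tau}G^{n-2})/\tau$; I would apply Dahlquist's $G$-stability identity in the form $(\tfrac32u-2v+\tfrac12 w)u=\tfrac14\bigl(u^2+(2u-v)^2-v^2-(2v-w)^2+(u-2v+w)^2\bigr)$ (with the $e^{-pU\tau}$ weights absorbed), giving a telescoping of the mixed norm $\frac14(\|G^n\|^2+\|2G^n-G^{n-1}\|^2)$, whose norm equivalence with $\|G^n\|^2+\|G^{n-1}\|^2$ is the source of the prefactor~$4$ in the PI bound. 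Second, for PI the quadrature weight evaluates to $Q_0^{1-\gamma}=\tau^{\gamma-1}/\Gamma(\gamma+1)$, which produces the factor $1/\sqrt{\Gamma(\gamma+1)}$. The remaining $B$-analysis is the same invocation of Lemma~3.1, and the final inequality is rearranged via the same quadratic-inequality/max-over-$n$ device, with the constants $5$ and $18$ emerging from the BDF2 norm-equivalence constants plus the unavoidable $n=1$ FBDF startup step (which uses the weaker BDF1 identity). The main obstacle I anticipate is not the scalar inequalities themselves but justifying the application of the scalar Lemma~3.1 to $B(G^{n-l},G^n)$; I plan to handle this once and for all by expanding each $G^n$ in a complete $B$-orthonormal system (legitimate because $B$ is a scalar product on $H_0^{\alpha/2}$ by coercivity) and then invoking the lemma coefficient-wise. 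A secondary bookkeeping obstacle is tracking the $Q_0^{1-\gamma}\normmm{G^0}^2$ boundary term coming from shifting Lemma~3.1's sum from $n\ge 0$ to $n\ge 1$; verifying that this boundary contribution produces exactly the $\tau^{\gamma/2}\normmm{g}$ factor in the statement is what forces the explicit computation of $Q_0^{1-\gamma}$ in each scheme.
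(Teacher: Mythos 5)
Your proposal is correct and follows essentially the same route as the paper: testing with $G_J$ (resp.\ $G^n$), using the Plancherel positivity (\ref{eq:Positive}) for the space semi-discrete case, reducing the $B$-bilinear sums to scalar sequences so that Lemma \ref{eq:lemma31} applies, handling $\mathcal{L}_1$ by the BDF1 telescoping and the BDF2 $G$-stability identity, and closing with the max-over-$n$ quadratic inequality together with the explicit values $Q_0^{1-\gamma}=\tau^{\gamma-1}$ (FBDF) and $Q_0^{1-\gamma}=\tau^{\gamma-1}/\Gamma(\gamma+1)$ (PI). The only difference is cosmetic: you diagonalize $B$ by an $L^2$-orthonormal eigenbasis, whereas the paper writes $B(\Theta,\chi)=({\bf B}_l^{1/2}c(\Theta),{\bf B}_l^{1/2}c(\chi))_{l_2}$ via the wavelet Riesz basis and the positive square root of ${\bf B}_l$ — both devices serve the same purpose of applying the scalar lemma coefficient-wise.
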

\begin{proof}
Let $\mathcal{B}_J:S_J\to S_J$ be the discrete analogue of $\nabla^{\alpha}_x$ defined by
\begin{equation}\label{eq:BB}
\left(\mathcal{B}_J \Theta,\chi\right)=B\left(\Theta,\chi\right)\quad \forall\, \Theta, \chi \in S_J.
\end{equation}
Then operator $\mathcal{B}_J$ is selfadjoint and positive define in $S_J$ w.r.t. $(\cdot,\cdot)$.
Using (\ref{eq:Positive}), from the space semi discrete scheme, one can easily derive
\begin{equation}
\left\|G_J(t_n)\right\|^2\le\left\|G_J(0)\right\|^2+2\int_0^{t_n}\left\|f\right\|\left\|G_J\right\|\,dt.
\end{equation}
 Supposing that $\left\|G_J(t^\ast)\right\|=\sup\limits_{0\le t\le t_n}\left\|G_J(t)\right\|$,  one can get (\ref{theorem1_1}) through
\begin{equation}
\left\|G_J(t_n)\right\|\le\left\|G_J(t^\ast)\right\|\le\left\|G_J(0)\right\|+2\int_0^{t_n}\left\|f\right\|dt.
\end{equation}
Next, we do the estimations for the time semi discrete schemes. Let ${\mathcal{B}}: H_0^{\frac{\alpha}{2}}(\Omega)\to H^{-\frac{\alpha}{2}}(\Omega)$ denote the usual differential operator given as
\begin{equation}
 \left\langle{\mathcal{ B}}\Theta, \chi\right\rangle=B(\Theta,\chi)\quad \forall\, \Theta,\, \chi \in H_0^{\frac{\alpha}{2}}(\Omega).
\end{equation}
It is easy to check that $\mathcal{B}$ is a boundedly invertible operator, i.e.,
\begin{equation}
C_1\|\Theta\|_{H^{\frac{\alpha}{2}}(\Omega)}\stackrel{<}{\sim}\|\mathcal{B}\Theta\|_{H^{-\frac{\alpha}{2}}}\stackrel{<}{\sim} C_2\|\Theta\|_{H^{\frac{\alpha}{2}}(\Omega)}\quad \forall \Theta \in {H_0^{\frac{\alpha}{2}}(\Omega)}.
\end{equation}
Choosing $\Psi=\left\{2^{-\frac{j\alpha }{2}}\psi_{j,k}, \,k\in \nabla_j,j\ge J_0-1\right\}$ as a Riesz basis  of $H_0^{\frac{\alpha}{2}}(\Omega)$, one can define a well-conditioned operator ${\bf B}_l: l_2(\mathcal{J})\to l_2(\mathcal{J}),\, {\bf B}_l=B(\Psi,\Psi)$ \cite{Urban:09}; for any function $ \Theta=c(\Theta)\Psi\in {H_0^{\frac{\alpha}{2}}(\Omega)},\, \chi=c(\chi)\Psi\in {H_0^{\frac{\alpha}{2}}(\Omega)}$, it follows that
\[\left({\bf B}_l c(\Theta), c(\chi)\right)_{l_2}=\left\langle\mathcal{B}\Theta,\chi\right\rangle,\]
where  $\mathcal{J}=\left\{k\in \nabla_j,j\ge J_0-1\right\}$, and $\left(\cdot,\cdot\right)_{l_2}$ denotes the inner product in the sequence space $l_2(\mathcal{J})$. Since ${\bf B}_l$ is self-adjoint positive define and boundedly invertible, there exists an unique positive square root ${\bf B}_l^{\frac{1}{2}}$ \cite{Sen:13} satisfying $\left({\bf B}_l c(\Theta), c(\chi)\right)_{l_2}=\left({\bf B}_l^{\frac{1}{2}} c(\Theta),{\bf B}_l^{\frac{1}{2}} c(\chi)\right)_{l_2}$.

Choosing $v=G^n$ in  the time semi discrete schemes, for $ n\ge 1$, one gets
\begin{eqnarray}
\left(\mathcal{L}_1G^n, G^n\right)&=& K_{\gamma,\alpha}\sum_{l=0}^{n-1}e^{-pU l\tau}\left(Q^{1-\gamma}_{l-1}-Q^{1-\gamma}_l\right)B(G^{n-l},G^n)\\\nonumber
&&+K_{\gamma,\alpha}e^{-pU n\tau}\left(Q^{1-\gamma}_{n-1}-L^{1-\gamma}_{n}\right)B(G^0,G^n)+\left(f^n,G^n\right).
\end{eqnarray}
Replacing $n$ by $j$ and then adding $j$ from $1$ to $n$, one has
\begin{eqnarray}
\sum_{j=1}^n\left(\mathcal{L}_1G^j, G^j\right)&=& K_{\gamma,\alpha}\sum_{j=0}^n\sum_{l=0}^{j-1}e^{-pU l\tau}\left(Q^{1-\gamma}_{l-1}-Q^{1-\gamma}_l\right)B\left(G^{j-l},G^j\right) \\
 &&+K_{\gamma,\alpha}\sum_{j=0}^ne^{-pU j\tau}\left(Q^{1-\gamma}_{j-1}-L^{1-\gamma}_{j}\right)B\left(G^0,G^j\right)+K_{\gamma,\alpha}Q_0^{1-\gamma}B\left(G^0,G^0\right)+\sum_{j=1}^n\left(f^j,G^j\right).\nonumber
\end{eqnarray}
 Using  $B\left(G^{j-l},G^j\right)=\left({\bf B}_l^{\frac{1}{2}} c(G^{j-l}),{\bf B}_l^{\frac{1}{2}}c(G^j)\right)_{l_2}$ and Lemma \ref{eq:lemma31}, one obtains
\begin{equation}
\sum_{j=1}^n\left(\mathcal{L}_1G^j, G^j\right)\le K_{\gamma,\alpha} Q_0^{1-\gamma}B(G^0,G^0)+\sum_{j=1}^n\left(f^j,G^j\right).
\end{equation}
Supposing $\left\|G^m\right\|=\max\limits_{0\le j\le n}\left\|G^j\right\|$, the FBDF scheme yields
\begin{equation}
\left\|G^m\right\|^2\le\left(2\tau\sum_{j=1}^n\left\|f^j\right\|\right)\left\|G^m\right\|+\left\|G^0\right\|^2+2K_{\gamma,\alpha}\tau Q_0^{1-\gamma}\normmm{G^0}^2.\\
\end{equation}
For the PI scheme,  when $ n\ge 2$, it is easy to check that
\begin{eqnarray}
\tau \left(\mathcal{L}_1G_J^n, G_J^n\right)\ge\frac{1}{4}\left\|G_J^n\right\|^2-\frac{1}{4}\left\|G_J^{n-1}\right\|^2
+\frac{e^{-2pU \tau}}{4}\left\|2e^{pU\tau}G_J^{n}-G_J^{n-1}\right\|^2-\frac{e^{-2pU \tau}}{4}\left\|2e^{pU \tau}G_J^{n-1}-G_J^{n-2}\right\|^2,\nonumber
\end{eqnarray}
whence
\begin{equation}\label{eq:relationeq}
\tau\sum_{j=2}^n\left(\mathcal{L}_1G^j, G^j\right)\ge\frac{1}{4}\left\|G_J^n\right\|^2-\frac{9}{4}\left\|G_J^1\right\|^2-\frac{1}{2}\left\|G_J^0\right\|^2.
\end{equation}
So it follows that
\begin{equation}
\left\|G^m\right\|^2\le\left(18\tau\sum_{j=1}^n\left\|f^n\right\|\right)\left\|G^m\right\|+18K_{\gamma,\alpha}\tau Q_0^{1-\gamma}\normmm{G^0}^2+11\left\|G^0\right\|^2,\quad n\ge1. \end{equation}
Combining with that  $ a, b,c>0$ and $a^2\le a\cdot b+c$ lead to $ a\le b+\sqrt{c}$, the desired results are obtained.
\end{proof}
The obtained stable space semi discrete schemes can also be solved by the numerical inverse Laplace transform or matrix function technique. And the stable time semi discrete schemes can also be solved by spectral or finite element methods.


\begin{theorem}\label{theorem:11}
The {\rm MGM} fully discrete schemes (\ref{eq:fulldiscrete}) are unconditionally stable.  Let $e^n=G_x(p,t_n)-G^n_J$. Then for the sufficiently regular solution $G_x(p,t)$, the error estimate of the {\rm PI} ({\rm or  FBDF}) scheme (\ref{eq:fulldiscrete}) is 
\begin{equation}\label{eq:err1-1}
\left\|e^n\right\|^2+ 2K_{\gamma,\alpha}\tau^{\gamma}/\Gamma(1+\gamma)\normmm{e^n}^2\le C\left(2^{J(\alpha-2d)}+\tau^{\gamma}2^{J(\alpha-2d)}+\tau^{2+2\gamma}\,\left( {\rm or}~\tau^2\right)\right).
\end{equation}
 In addition, if $\left\|\Lambda G -G\right\|\stackrel{<}{\sim}2^{-Jd}\left\|G\right\|_{H^d(\Omega)}$ holds (it is obvious when  $\alpha=2$), then the convergence order related to space can be improved to $\mathcal{O}\left(2^{-2Jd}+\tau^{\gamma}2^{J(\alpha-2d)}\right)$. Here $d$ is the order of B-spline.
\end{theorem}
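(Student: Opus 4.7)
The plan is to combine an energy argument in the spirit of Theorem 3.1 with the standard splitting $e^n = \rho^n + \theta^n$, where $\rho^n := G_x(p,t_n) - \Lambda G_x(p,t_n)$ is the Ritz projection error associated with $B(\cdot,\cdot)$, and $\theta^n := \Lambda G_x(p,t_n) - G_J^n \in S_J$. Because $B(\rho^n,v)=0$ for every $v\in S_J$ by construction of $\Lambda$, subtracting the scheme (\ref{eq:fulldiscrete}) evaluated at $\Lambda G_x(p,t_n)$ from the exact equation leaves a perturbed version of (\ref{eq:fulldiscrete}) for $\theta^n$ whose forcing involves only (i) the time-consistency errors of $\mathcal{L}_1$ and $\mathcal{L}_2$ evaluated on the smooth solution, and (ii) the projection error in the time terms $(\mathcal{L}_1\rho^n,v)$ (and an analogous ``commutator'' contribution from $\mathcal{L}_2$) — the spatial operator $B$ no longer sees $\rho^n$. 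This reduces the theorem to proving unconditional stability for (\ref{eq:fulldiscrete}) with a generic forcing, and then inserting the truncation errors listed in Section 2 and (\ref{eq:L1-2}).

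For the stability half, I would rewrite (\ref{eq:fulldiscrete}) with the unified form
\[
\mathcal{L}_2G_J^n=\sum_{l=0}^{n-1}e^{-pUl\tau}\bigl(Q^{1-\gamma}_{l}-Q^{1-\gamma}_{l-1}\bigr)G_J^{n-l}-e^{-pUn\tau}\bigl(Q^{1-\gamma}_{n-1}-L^{1-\gamma}_{n}\bigr)G_J^0,
\]
test with $v=G_J^n$, and sum $j=1,\dots,n$. The spatial part is handled via the square root ${\bf B}_l^{1/2}$ of the wavelet representation of $\mathcal{B}_J$ exactly as in Theorem 3.1, so $B(G^{j-l}_J,G^j_J) = ({\bf B}_l^{1/2}c(G^{j-l}_J),{\bf B}_l^{1/2}c(G^j_J))_{l_2}$ fits the scalar framework of Lemma 3.1 componentwise. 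Applying inequality (\ref{eq:lemma22_2}) to each coordinate and summing produces a telescoping nonnegative accumulation \emph{plus} a residual $\tfrac{1}{2}L^{1-\gamma}_n\normmm{G_J^n}^2$, which (since $\tau L^{1-\gamma}_n \sim \tau^{\gamma}/\Gamma(1+\gamma)$ at the first step and is nondecreasing in $n$ after scaling) yields precisely the $\tfrac{2K_{\gamma,\alpha}\tau^{\gamma}}{\Gamma(1+\gamma)}\normmm{\cdot}^2$ coercive piece appearing in (\ref{eq:err1-1}). For the $\mathcal{L}_1$ side, the FBDF choice gives $\tau(\mathcal{L}_1G_J^j,G_J^j)\ge\tfrac12(\|G_J^j\|^2-\|G_J^{j-1}\|^2)$ directly, while the PI choice uses the ``$G^2$-stability'' identity (\ref{eq:relationeq}) already derived in Theorem 3.1; Cauchy--Schwarz on the forcing and the elementary inequality $a^2\le ab+c\Rightarrow a\le b+\sqrt{c}$ close the stability bound.

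For the error half, one plugs the stability estimate into the $\theta^n$-equation with the specific consistency errors: $(\mathcal{L}_1-D_t^1)G_x(p,t_n)=\mathcal{O}(\tau)$ for FBDF and $\mathcal{O}(\tau^2)$ for PI by (\ref{eq:L1-2}); and $(\mathcal{L}_2-{}^SD_t^{1-\gamma})G_x(p,t_n)=\mathcal{O}(\tau)$ for FBDF via the Grünwald--Letnikov estimate, respectively $\mathcal{O}(\tau^{1+\gamma})$ for PI by the bound on $|R^n|$ in (\ref{eq:PIError}) applied to order $1-\gamma$. Squaring and summing, the PI scheme accumulates $\tau^{2+2\gamma}$ and the FBDF scheme $\tau^2$, matching (\ref{eq:err1-1}). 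The $\rho^n$ contributions are controlled by the Jackson estimate $\|\rho^n\|_{H^{\alpha/2}}\stackrel{<}{\sim} 2^{J(\alpha/2-d)}\|G\|_{H^d}$, which when coupled to $B(\cdot,\cdot)$ yields the $2^{J(\alpha-2d)}$ and $\tau^\gamma 2^{J(\alpha-2d)}$ terms (the latter coming from the $Q^{1-\gamma}$-weighted sum), while the triangle inequality with $\rho^n$ contributes the same order. Under the extra hypothesis $\|\Lambda G-G\|\stackrel{<}{\sim}2^{-Jd}\|G\|_{H^d}$, the purely $L_2$ contribution of $\rho^n$ drops to $2^{-2Jd}$ and the quoted improved estimate follows.

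The main technical obstacle is handling the time-discrete operator $\mathcal{L}_2$ inside the spatial bilinear form $B$ simultaneously in the energy identity: one cannot simply pass $\Lambda$ through $\mathcal{L}_2$ because of the factor $e^{pUj\tau}$ coupling space (through $U(x_0)$) and time, so the coordinatewise application of Lemma 3.1 via the wavelet positive square root ${\bf B}_l^{1/2}$ is essential. Once that positivity mechanism is set up, the remaining estimates are mechanical combinations of (\ref{eq:PIError}), (\ref{eq:L1-2}), and the projection bounds (\ref{project}), (\ref{eq:Jackson}).
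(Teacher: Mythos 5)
Your proposal is correct and follows essentially the same route as the paper: the splitting $e^n=(I-\Lambda)G(t_n)+(\Lambda G(t_n)-G_J^n)$ with $B((I-\Lambda)G,v)=0$, stability from testing with $G_J^n$ and invoking Lemma \ref{eq:lemma31} (inequality (\ref{eq:lemma22_2})) for the memory term together with (\ref{eq:relationeq}) for the PI time part, and then insertion of the consistency errors $\mathcal{O}(\tau)$/$\mathcal{O}(\tau^{1+\gamma})$ and the projection bounds. Two small remarks. First, for the fully discrete stability the paper does not redeploy the coordinatewise ${\bf B}_l^{1/2}$ device of Theorem \ref{eq:theorem1}; it simply uses $B(G_J^{n-l},G_J^n)\le\normmm{G_J^{n-l}}\,\normmm{G_J^n}$ (the nonzero off-diagonal weights $Q^{1-\gamma}_{l-1}-Q^{1-\gamma}_l$ all have one sign) and applies Lemma \ref{eq:lemma31} to the scalar sequence $V_j=\normmm{G_J^j}$ — a slightly more elementary reduction; your version works equally well, and note that under the standing assumption of this subsection $pU$ is a nonnegative constant, so the ``commutator'' obstacle you describe does not actually arise. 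Second, your attribution of the coercive term $2K_{\gamma,\alpha}\tau^{\gamma}/\Gamma(1+\gamma)\,\normmm{e^n}^2$ to the residual $\tfrac12 L_n^{1-\gamma}\normmm{G_J^n}^2$ is off: $\tau L_n^{1-\gamma}=\tau t_n^{\gamma-1}/\Gamma(\gamma)$ \emph{decreases} with $n$ and cannot supply a uniform $\tau^{\gamma}$ weight; the paper instead retains the $k=0$ term $K_{\gamma,\alpha}\tau Q_0^{1-\gamma}\normmm{G_J^n}^2=K_{\gamma,\alpha}\tau^{\gamma}\normmm{G_J^n}^2/\Gamma(1+\gamma)$ of the telescoped $Q$-sum (which you do keep), and uses the $L_j^{1-\gamma}$ term together with the fractional Poincar\'e inequality to absorb the forcing $2(f^j,G_J^j)$. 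With that bookkeeping corrected your argument coincides with the paper's.
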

\begin{proof}
Taking $v=G^n_J$ in (\ref{eq:fulldiscrete}) leads to
\begin{eqnarray}
 \left(\mathcal{L}_1G_J^n, G_J^n\right)&=&K_{\gamma,\alpha}\sum_{l=0}^{n-1}e^{-pU l\tau}\left(Q^{1-\gamma}_{l-1}-Q^{1-\gamma}_l\right)B(G_J^{n-l},G_J^n) \\\nonumber
 &&+K_{\gamma,\alpha}e^{-pU n\tau}\left(Q^{1-\gamma}_{n-1}-L^{1-\gamma}_{n}\right)B(G_J^0,G_J^n)+\left(f^n,G_J^n\right).\nonumber
\end{eqnarray}
Noting that $B\left(G_J^{n-l},G_J^n\right)\le \normmm{G_J^{n-l}}\normmm{G_J^n}$ and using Lemma \ref{eq:lemma31}, one obtains
\begin{eqnarray} \label{eq:fullstable}
 2\left(\mathcal{L}_1G_J^n, G_J^n\right)&\le &K_{\gamma,\alpha}\sum_{k=0}^{n-1}e^{-pU k\tau}Q^{1-\gamma}_k\normmm{G_J^{n-1-k}}^2 \\[3pt]\nonumber
 &&-K_{\gamma,\alpha}\sum_{k=0}^ne^{-pU k\tau}Q^{1-\gamma}_k\normmm{G_J^{n-k}}^2-\,K_{\gamma,\alpha}L^{1-\gamma}_n\normmm{G_J^{n}}^2+2\left(f^n,G_J^n\right).\nonumber
\end{eqnarray}

Let us first consider  the FBDF scheme.
Replacing $n$ by $j$ and then summing from $1$ to $n$, it yields that
\begin{eqnarray}
\left\|G^n_J\right\|^2&+&K_{\gamma,\alpha}\tau\sum_{k=0}^ne^{-pU k\tau}Q^{1-\gamma}_k\normmm{G_J^{n-k}}^2\\ \nonumber
&\le& \left\|G^0_J\right\|^2+K_{\gamma,\alpha}\tau Q^{1-\gamma}_0\normmm{G_J^0}^2-K_{\gamma,\alpha}\tau\sum_{j=1}^nL^{1-\gamma}_j\normmm{ G_J^j}^2+2\tau \sum_{j=1}^n\left(f^j,G_J^j\right).\nonumber
\end{eqnarray}
Using  the fractional Poincar\'e-Friedrich's \cite{Ervin:06} and Yong's inequalities, one has
\begin{eqnarray}
&&-K_{\gamma,\alpha}\tau\sum_{j=1}^nL^{1-\gamma}_j\normmm{G_J^j}^2+2\tau \sum_{j=1}^n\left(f^j,G_J^j\right)\\\nonumber
&&\le -\sum_{j=1}^nK_{\gamma,\alpha}\tau t_j^{\gamma-1}/\left(C_f\Gamma(\gamma)\right)\left\|G_J^j\right\|^2+2\tau \sum_{j=1}^n\left(f^j,G_J^j\right)\le \tau{4\Gamma(\gamma)C_f}/\left({K_{\gamma,\alpha}t_n^{\gamma-1}}\right)\sum_{j=1}^n\left\|f^j\right\|^2,\nonumber
\end{eqnarray}
where $C_f$ comes from $\left\|G_J^j\right\|\le C_f\normmm{G_J^j}$. Then  the full discrete FBDF scheme (\ref{eq:fulldiscrete}) is stable, and
\begin{eqnarray}\label{eq:FBDFstable}
\left\|G^n_J\right\|^2&+&K_{\gamma,\alpha}\tau\sum_{k=0}^ne^{-pU k\tau}Q^{1-\gamma}_k\normmm{G_J^{n-k}}^2\\
&\le& \left\|G^0_J\right\|^2+K_{\gamma,\alpha}\tau Q^{1-\gamma}_0\normmm{G_J^0}^2+ \tau{4\Gamma(\gamma)C_f}/\left({K_{\gamma,\alpha}t_n^{\gamma-1}}\right)\sum_{j=1}^n\left\|f^j\right\|^2.\nonumber
\end{eqnarray}

Next, we consider the PI scheme.  For $n\ge 2$, starting from (\ref{eq:fullstable}) and (\ref{eq:relationeq}), one obtains
\begin{eqnarray}\label{eq:PI1}
\left\|G^n_J\right\|^2&+&2K_{\gamma,\alpha}\tau\sum_{k=0}^ne^{-pU k\tau}Q^{1-\gamma}_k\normmm{G_J^{n-k}}^2\\
 &\le& 2\left\|G^0_J\right\|^2+9\|G_J^1\|^2+2K_{\gamma,\alpha}\tau\sum_{k=0}^1e^{-pU k\tau}Q^{1-\gamma}_k\normmm{G_J^{1-k}}^2+ \tau {8\Gamma(\gamma)C_f}/\left({K_{\gamma,\alpha}t_n^{\gamma-1}}\right)\sum_{j=2}^n\left\|f^j\right\|^2.\nonumber
\end{eqnarray}
And when $n=1$, applying Young's inequality to the last term of (\ref{eq:fullstable}) results in
\begin{eqnarray}\label{eq:PI2}
\left\|G^1_J\right\|^2+2K_{\gamma,\alpha}\tau\sum_{k=0}^1e^{-pU k\tau}Q^{1-\gamma}_k\normmm{G_J^{1-k}}^2 \le2\left\|G^0_J\right\|^2+2K_{\gamma,\alpha}\tau Q^{1-\gamma}_0\normmm{G_J^0}^2+4\tau^2\left\|f^1\right\|^2.
\end{eqnarray}
 Now, we derive the convergence results. Let
\begin{eqnarray*}
&& e^n=G(t_n)-G^n_J=\big(G(t_n)-\Lambda G(t_n)\big)+\big(\Lambda G(t_n)-G^n_J\big)=\theta^n+\eta^n,\\[5pt]
&&\mathcal{R}^n=\left(\mathcal{L}_1G^n-\frac{\partial G(t_n)}{\partial t}-pU G(t_n)\right)+K_{\gamma,\alpha}\left(\mathcal{L}_2(\nabla_{x}^{\alpha} G^n)-{}^SD_t^{1-\gamma}\,\nabla_{x}^{\alpha} G^n\right)-\mathcal{L}_1\theta^n,
\end{eqnarray*}
where $G(t_n)=G_x\left(p,t_n\right)$. Combining (\ref{eq:FLMMs}) and (\ref{eq:FBDFstable}) results in
\begin{eqnarray}
\left\|\eta^n\right\|^2+K_{\gamma}\tau Q^{1-\gamma}_0\normmm{\eta^n}^2\stackrel {<}{\sim}\left\|\eta^0\right\|^2+\tau Q^{1-\gamma}_0\normmm{\eta^0}^2+ \tau\sum_{j=1}^n\left\|\mathcal{R}^j\right\|^2,
\end{eqnarray}
where $\normmm{\eta^0}\le\normmm{G(0)-G_J^0}+\normmm{\theta^0}$ and
\begin{eqnarray}
\tau\sum_{j=1}^n\left\|\mathcal{R}^j\right\|^2\stackrel{<}{\sim}
\sum_{j=1}^n\int_{t_{j-1}}^{t_j}\left\|\left(\Lambda-I\right)D_t^1G\right\|^2\,ds+\tau^2\int_0^{t_n}\left\|D_t^2G\right\|^2\,ds+C\tau^2.
\end{eqnarray}
Combining (\ref{eq:PIError}), (\ref{eq:PI1}), and (\ref{eq:PI2}) leads to
\begin{eqnarray}
 \left\|\eta^n\right\|^2+2K_{\gamma}\tau Q^{1-\gamma}_0\normmm{\eta^{n}}^2 \stackrel{<}{\sim} \left\|\eta^0\right\|^2+\tau Q^{1-\gamma}_0\normmm{\eta^0}^2+\tau^2\left\|\mathcal{R}^1\right\|^2+\tau\sum_{j=2}^n\left\|\mathcal{R}^j\right\|^2,\quad n\ge 1,
\end{eqnarray}
where
\begin{eqnarray*}
&&\tau^2\|\mathcal{R}^1\|^2\stackrel {<}{\sim}\tau\int_{0}^{\tau}\left\|\left(\Lambda-I\right)D_t^1G\right\|^2\,ds+\tau^3\int_0^{\tau}\left\|D_t^2G\right\|^2\,ds+C\tau^{3+2\gamma},\\
&&\tau\sum_{j=2}^n\|\mathcal{R}^j\|^2\stackrel{<}{\sim}\int_{0}^{t_n}\left\|\left(\Lambda-I\right)D_t^1G\right\|^2\,ds+\tau^4\sum_{j=2}^n\int_{t_{j-2}}^{t_j}\left\|D_t^3G\right\|^2dt+C\tau^{2+2\gamma}.
\end{eqnarray*}
Noting that
 \begin{equation} \label{eq:projectsss}
\left\|\Lambda G -G\right\|\stackrel{<}{\sim}\normmm{\Lambda G-G}\sim \left\|\Lambda G -G\right\|_{H^{\frac{\alpha}{2}}(\Omega)},
\end{equation}
and taking $G_J^0$ as one of $ \Lambda g$ and $\Pi g$, the desired results are obtained.
\end{proof}

\subsection{Numerical schemes $\mathrm{II}$}
Taking the Laplace transform on both sides of (\ref{eq:model1}) leads to
\begin{equation}\label{eq:Laplacequation}
\left(s+pU\right)\hat{G}-K_{\gamma,\alpha}\left(s+pU\right)^{1-\gamma}\nabla_x^\alpha \hat{G}=g.
\end{equation}
Then
\begin{equation}\label{eq:Laplacequation}
\left(s+pU\right)^\gamma\hat{G}-\left(s+pU\right)^{\gamma-1}g-K_{\gamma,\alpha} \nabla_x^\alpha \hat{G}=0.
\end{equation}
Taking the inverse Laplace transform to the above equation results in
\begin{equation}\label{eq:nonsym}
{}^SD_{*,t}^{\gamma}G_x\left(p,t\right)-K_{\gamma,\alpha}\nabla_x^\alpha G_{x}\left(p,t\right)=0,
\end{equation}
with the initial condition $G_x(p,0)=g(x,p)$. For making a more general discussion, we add a force term $f(x,p,t)$ on the right hand of (\ref{eq:nonsym}). And the corresponding space semi discrete scheme reads:  find $G_J \in S_J$ such that
\begin{equation}\label{eq:semidiscret41}
\left({}^SD_{*,t}^{\gamma}G_J, \,v\right)+K_{\gamma,\alpha}B(G_J,\,v)=(f,\,v) \quad \forall v\in S_J.
\end{equation}
\begin{lemma} [see \cite{Vergara:08}]\label{eq:Lemma4.1}
Let $k(t)=t^{-\gamma}/\Gamma(1-\gamma),\, T>0$, and $\mathcal{H}(\Omega)$ a real Hilbert space. Suppose that $v(\cdot,t)\in H^1\left([0,T],\mathcal{H}\right)$ and $v(\cdot,0)\in \mathcal{H}$. Then
\begin{equation}
\left\langle v(t),\,\frac{\partial}{\partial t}(k*v)(t)\right\rangle_{\mathcal{H}}\ge\frac{1}{2}\frac{d}{d t}\left(k*\left\|v\right\|^2_{\mathcal{H}}\right)(t)+\frac{1}{2}k(t)\left\|v(t)\right\|_{\mathcal{H}}^2,\quad a.e.~~ t\in(0,T).
\end{equation}
\end{lemma}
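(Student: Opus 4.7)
The plan is to prove the pointwise inequality through an explicit computation of the deficit
\[
I(t):=\Big\langle v(t),\,\tfrac{\partial}{\partial t}(k*v)(t)\Big\rangle_{\mathcal{H}}-\tfrac{1}{2}\tfrac{d}{d t}\bigl(k*\|v\|_{\mathcal{H}}^{2}\bigr)(t)-\tfrac{1}{2}k(t)\|v(t)\|_{\mathcal{H}}^{2},
\]
which I want to rewrite as a manifestly nonnegative quantity. First I will use the regularity $v\in H^{1}([0,T],\mathcal{H})$ to shift the time derivative inside the convolution: writing $v(s)=v(0)+\int_{0}^{s}v'(r)\,dr$ and an interchange of order of integration yields
\[
\tfrac{\partial}{\partial t}(k*v)(t)=k(t)v(0)+(k*v')(t),\qquad \tfrac{d}{dt}\bigl(k*\|v\|_{\mathcal{H}}^{2}\bigr)(t)=k(t)\|v(0)\|_{\mathcal{H}}^{2}+2(k*\langle v,v'\rangle_{\mathcal{H}})(t).
\]
These identities have to be interpreted a.e. in $t$, and to justify differentiation through the singular kernel I would first carry out the argument with the regularized kernel $k_{\varepsilon}(t)=(t+\varepsilon)^{-\gamma}/\Gamma(1-\gamma)$ and then pass to the limit $\varepsilon\downarrow 0$ by dominated convergence.

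Substituting these expressions gives, after grouping the $k(t)$-terms into a square,
\[
I(t)=-\tfrac{1}{2}k(t)\|v(t)-v(0)\|_{\mathcal{H}}^{2}+\int_{0}^{t}k(t-s)\bigl\langle v(t)-v(s),\,v'(s)\bigr\rangle_{\mathcal{H}}\,ds.
\]
The key trick is to view $w(s):=v(t)-v(s)$ for fixed $t$ as the new variable; then $\langle w(s),v'(s)\rangle_{\mathcal{H}}=-\tfrac{1}{2}\tfrac{d}{ds}\|w(s)\|_{\mathcal{H}}^{2}$, and an integration by parts in $s$ (again performed on the regularized kernel $k_{\varepsilon}$ to avoid the singularity at zero) yields
\[
\int_{0}^{t}k(t-s)\bigl\langle v(t)-v(s),v'(s)\bigr\rangle_{\mathcal{H}}\,ds=\tfrac{1}{2}k(t)\|v(t)-v(0)\|_{\mathcal{H}}^{2}-\tfrac{1}{2}\int_{0}^{t}k'(t-s)\|v(t)-v(s)\|_{\mathcal{H}}^{2}\,ds,
\]
since the $s=t$ boundary contribution vanishes because $w(t)=0$. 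Combining the two displays produces the clean identity
\[
I(t)=-\tfrac{1}{2}\int_{0}^{t}k'(t-s)\,\|v(t)-v(s)\|_{\mathcal{H}}^{2}\,ds.
\]
Since $k'(\tau)=-\gamma\tau^{-\gamma-1}/\Gamma(1-\gamma)\le 0$ on $(0,t)$, the integrand is nonnegative, so $I(t)\ge 0$, which is the claimed inequality.

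The main obstacle is the singularity of $k$ at the origin, which makes all three displayed formulas formally ill-defined: $\partial_{t}(k*v)$ cannot be computed by naive differentiation, and the integration by parts produces an $k(0)$ boundary contribution. I would handle both issues simultaneously by working with the absolutely continuous kernel $k_{\varepsilon}$, for which every step above is elementary, and then use the bound $|k_{\varepsilon}-k|\to 0$ in $L_{\mathrm{loc}}^{1}$ and monotone convergence $-k_{\varepsilon}'\uparrow -k'$ to pass to the limit. A minor technical point is that $v\in H^{1}([0,T],\mathcal{H})$ only gives $s\mapsto\|v(t)-v(s)\|_{\mathcal{H}}^{2}$ of class $H^{1}$, which however is exactly what is needed for the above integration by parts to be valid in the regularized setting.
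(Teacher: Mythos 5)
Your proof is correct. Note first that the paper does not actually prove this lemma --- it is stated with the attribution ``see \cite{Vergara:08}'' and used as a black box --- so there is no in-paper argument to compare against; what you have reconstructed is essentially the standard proof from that reference. The heart of your argument, the identity
\[
\Big\langle v(t),\tfrac{\partial}{\partial t}(k*v)(t)\Big\rangle_{\mathcal H}-\tfrac12\tfrac{d}{dt}\bigl(k*\|v\|_{\mathcal H}^2\bigr)(t)-\tfrac12 k(t)\|v(t)\|_{\mathcal H}^2=-\tfrac12\int_0^t k'(t-s)\,\|v(t)-v(s)\|_{\mathcal H}^2\,ds,
\]
is exactly the fundamental identity on which the Vergara--Zacher argument rests, and your derivation of it (shifting the time derivative onto $v$ via $\partial_t(k*v)=k(t)v(0)+(k*v')$, completing the square in the $k(t)$-terms to get $-\tfrac12 k(t)\|v(t)-v(0)\|_{\mathcal H}^2$, and integrating by parts in $s$ with $w(s)=v(t)-v(s)$) checks out line by line; monotonicity of $k$ then gives nonnegativity of the deficit. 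Two points deserve exactly the care you already flag: (i) the right-hand side is finite and the $s=t$ boundary term vanishes because $\|v(t)-v(s)\|_{\mathcal H}^2\le (t-s)\int_s^t\|v'\|_{\mathcal H}^2\,dr$ compensates the $(t-s)^{-1-\gamma}$ singularity of $-k'$; (ii) for the singular kernel the a.e.\ statement is most cleanly obtained by proving the inequality for the regularized kernel $k_\varepsilon$ in integrated form (tested against nonnegative $\varphi\in C_0^\infty(0,T)$) and passing to the limit by monotone and dominated convergence, as you propose. With those two remarks made explicit, your argument is a complete, self-contained proof of the cited lemma.
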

\begin{theorem}
For the space semi discrete scheme (\ref{eq:semidiscret41}) with $0<\gamma<1$, it holds that
\begin{equation}
\int_0^t\left(\left\|G_J\right\|^2+K_{\gamma,\alpha}B\left(G_J,\,G_J\right)\right)\,ds\le
 C \left(\left\|g\right\|^2+ \int_0^t\left\|f\right\|^2\,ds\right).
\end{equation}
\end{theorem}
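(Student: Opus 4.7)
The plan is to transform the equation for $G_J$ into one of classical Caputo--convolution type via the substitution $\tilde{G}_J := e^{pUt}G_J$ and $w := \tilde{G}_J - g$, so that $w(0) = 0$. Under the standing assumption (inherited from Subsection 3.1) that $pU$ is a nonnegative real constant, $\tilde{G}_J$ remains in $S_J$, and a direct computation from (\ref{eq:caputosubstential}) gives
\[e^{pUt}\,{}^SD_{*,t}^{\gamma} G_J = k \ast \tilde{G}_J' = \partial_t(k \ast w), \qquad k(t)=\frac{t^{-\gamma}}{\Gamma(1-\gamma)},\]
where the last equality uses $w(0)=0$ to absorb the boundary contribution that would otherwise appear when moving $\partial_t$ out of the convolution.

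First, I would insert the test function $v = e^{2pUt}G_J \in S_J$ into (\ref{eq:semidiscret41}). After pulling the time-dependent scalar factors through each of the three inner products, the identity reduces to
\[\bigl(\partial_t(k\ast w),\,\tilde{G}_J\bigr) + K_{\gamma,\alpha}\normmm{\tilde{G}_J}^{2} = \bigl(e^{pUt}f,\,\tilde{G}_J\bigr).\]
Splitting $\tilde{G}_J = w + g$, Lemma \ref{eq:Lemma4.1} bounds the $w$-part from below by $\tfrac{1}{2}\partial_t(k\ast\|w\|^{2}) + \tfrac{1}{2}k(t)\|w\|^{2}$, and integration in $t$ turns $\int_0^t(\partial_s(k\ast w),g)\,ds$ into $\bigl((k\ast w)(t),g\bigr)$.

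Next I would estimate this cross-term via the pointwise convolution Cauchy--Schwarz inequality $\|(k\ast w)(t)\|^{2}\le \tfrac{t^{1-\gamma}}{\Gamma(2-\gamma)}(k\ast\|w\|^{2})(t)$ followed by Young, absorbing a quarter of $(k\ast\|w\|^{2})(t)$ into the LHS and leaving only a $\|g\|^{2}$ remainder. The forcing term is controlled by Cauchy--Schwarz, the fractional Poincar\'e--Friedrich's inequality $\|\tilde{G}_J\|\le C_f\normmm{\tilde{G}_J}$ (already employed in the proof of Theorem \ref{theorem:11}), and Young with weight $K_{\gamma,\alpha}/2$, yielding
\[\tfrac{1}{4}(k\ast\|w\|^{2})(t) + \tfrac{1}{2}\int_{0}^{t}k(s)\|w\|^{2}\,ds + \tfrac{K_{\gamma,\alpha}}{2}\int_{0}^{t}\normmm{\tilde{G}_J}^{2}\,ds \le C\Bigl(\|g\|^{2} + \int_{0}^{t}\|f\|^{2}\,ds\Bigr).\]

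Finally, since $pU\ge 0$ implies $\|G_J\|\le\|\tilde{G}_J\|\le C_f\normmm{\tilde{G}_J}$ and $B(G_J,G_J)\le\normmm{\tilde{G}_J}^{2}$, the last term on the LHS dominates the target $\int_{0}^{t}(\|G_J\|^{2}+K_{\gamma,\alpha}B(G_J,G_J))\,ds$ up to constants depending on $C_f$, $K_{\gamma,\alpha}$, $\gamma$ and $T$, completing the argument. The hard part will be handling the unsigned cross-term $\bigl((k\ast w)(t),g\bigr)$ after integration: absorbing it into the history quantity $(k\ast\|w\|^{2})(t)$ via the convolution Cauchy--Schwarz inequality is precisely what forces the $\|g\|^{2}$ contribution on the RHS. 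A secondary subtlety is verifying the regularity hypothesis of Lemma \ref{eq:Lemma4.1}, which holds here because (\ref{eq:semidiscret41}) is an ODE system in the finite-dimensional space $S_J$.
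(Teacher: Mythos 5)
Your argument is correct, and it reaches the estimate by a genuinely different route in its middle part, even though both proofs rest on the same two pillars: testing against (a scalar multiple of) $G_J$ and the Vergara--Zacher inequality of Lemma \ref{eq:Lemma4.1}. The paper applies that lemma to $e^{pUt}G_J$ itself, so the initial datum enters as the \emph{pointwise} source $e^{pUt}k(t)(g,G_J)$, which is absorbed by the term $\tfrac12 k(t)\|e^{pUt}G_J\|^2$ supplied by the lemma; it then inverts the tempered fractional derivative by applying ${}_0D_t^{-\gamma,2pU}$ to the resulting differential inequality (using Lemma \ref{lemma:2.2} and ${}_0D_t^{-\gamma,0}k=1$), and only afterwards adds the two inequalities and integrates. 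You instead shift to $w=e^{pUt}G_J-g$ so that $w(0)=0$, integrate the identity once in time, and the initial datum reappears as the cross term $\bigl((k*w)(t),g\bigr)$, which you absorb into the history quantity $(k*\|w\|^2)(t)$ via the weighted Cauchy--Schwarz bound $\|(k*w)(t)\|^2\le \tfrac{t^{1-\gamma}}{\Gamma(2-\gamma)}(k*\|w\|^2)(t)$. Your route is more self-contained (no composition of tempered fractional integral and derivative is needed), while the paper's route additionally yields the pointwise-in-time bound $\|G_J(t)\|^2\le e^{-2pUt}\|g\|^2+\cdots$ as a by-product and keeps all exponential weights with nonpositive exponents, so its constant does not degrade with $pU$; your choice of test function $e^{2pUt}G_J$ injects a factor $e^{2pUT}$ into the forcing term, which is harmless for the stated result but worth noting. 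Two small caveats, both shared with the paper's own proof: pulling $e^{pUt}$ through the inner products and through $B(\cdot,\cdot)$ requires $pU$ to be a nonnegative \emph{constant} (which you state explicitly), and the $H^1([0,T],\mathcal{H})$ hypothesis of Lemma \ref{eq:Lemma4.1} is a genuine regularity assumption for a fractional ODE system, not automatic from finite dimensionality of $S_J$ alone.
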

\begin{proof}
Choosing  $v=G_J$ in (\ref{eq:semidiscret41}), we obtain
\begin{equation}
\left({}^SD_{t}^{\gamma}\left[G_J-e^{-pU t}g\right], \,G_J\right)+K_{\gamma,\alpha}B\left(G_J,\,G_J\right)=\left(f,\,G_J\right),
\end{equation}
which can be rewritten as
\begin{equation}
\qquad\left(e^{pU t}G_J,\,\frac{\partial}{\partial t}\big[k*(e^{pU t}G_J)\big](t)\right)+K_{\gamma,\alpha}e^{2pU t}B\left(G_J,\,G_J\right)=\frac{e^{pU t}t^{-\gamma}}{\Gamma\left(1-\gamma\right)}\left(g,\,G_J\right)+e^{2pU t}\left(f,\,G_J\right).
\end{equation}
Using Lemma \ref{eq:Lemma4.1}, one finds
\begin{eqnarray}
\frac{1}{2}e^{-2pUt}\frac{d}{d t}\left(k*\left(e^{2pU t}\left\|G_J\right\|^2\right)\right)(t)+\frac{1}{2}k(t)\left\|G_J\right\|^2+K_{\gamma,\alpha}B\left(G_J,\,G_J\right)\le e^{-pU t}k(t)\left(g,\,G_J\right)+\left(f,\,G_J\right).\nonumber
\end{eqnarray}
By Young's inequality and (\ref{eq:coercive}), it  yields  that
\begin{equation}\label{eq:ss1}
{}_0D_t^{\gamma,2pU}\left\|G_J\right\|^2+K_{\gamma,\alpha}B\left(G_J,\,G_J\right)\le k(t)e^{-2pU t}\left\|g\right\|^2+{1}/\left({K_{\gamma,\alpha}C_2}\right)\left\|f\right\|^2.
\end{equation}
Performing the operator ${}_0D_t^{-\gamma,2pU}$ on both sides of (\ref{eq:ss1}) and using Lemma \ref {lemma:2.2}, one obtains
\begin{equation} \label{eq:ss2}
\left\|G_J\right\|^2+K_{\gamma,\alpha}\,{}_0D_t^{-\gamma,2pU}B\left(G_J,\,G_J\right)\le e^{-2pU t}\left\|g\right\|^2+ {1}/\left({K_{\gamma,\alpha}C_2}\right){}_0D_t^{-\gamma,2pU}\left\|f\right\|^2.
\end{equation}
Adding (\ref{eq:ss1}) to (\ref{eq:ss2}) and integrating from $0$ to $t$, one reaches the conclusion.
\end{proof}

Now, thanks  to (\ref{eq:FLMMs}) and (\ref{eq:PI}), we denote
\begin{equation}\label{eq:discrete}
\mathcal{L}G^n_J=\left\{\begin{array}{ll}e^{-pU n\tau}\sum_{j=0}^n w_{n-j}^{\gamma}\left(e^{pU j\tau}G_J^j-G_J^0\right),&for\,{\rm FBDF},\\[3pt]\Gamma(1-\gamma)^{-1}\sum_{j=0}^{n}e^{pU(j-n)\tau}k^{\gamma}_{n-j}G_J^j,& for\,{\rm PI}.\end{array}\right.
\end{equation}
Then the fully discrete scheme of (\ref{eq:semidiscret41}) is: find $G_J^n\in S_J$ such that
\begin{equation} \label{eq:sss5}
(\mathcal{L}G^n_J,\,v)+K_{\gamma,\alpha}B(G_J^n,\,v)=(f,\,v)\qquad \forall v\in S_J.
\end{equation}
In the following analysis of this subsection, we suppose that $pU(x)\ge0$ (for the more general case, see Example 4.3). With the definition of $Q_n^{\gamma}, n=-1,0,1,\cdots$  given in (\ref{eq:coefficient}), Eq. (\ref{eq:discrete}) can also be uniformly rewritten as  \begin{equation}
 \mathcal{L}G_J^n=\sum_{l=0}^{n-1}e^{-pUl\tau}\left(Q_l^{\gamma}-Q_{l-1}^{\gamma}\right)G_J^{n-l}-e^{-pUn\tau}Q_{n-1}^{\gamma}G_J^0.
 \end{equation}
\begin{lemma} \label{eq:lemmass1}
For any positive integer $n$ and real vector $\{V_0, V_1,\cdots, V_n\}$ with $n+1$ real entries, one has
\begin{equation}\label{eq:sss3}
 V_n\cdot\mathcal{L}V_n\ge\left(\frac{Q_0^{\gamma}}{2}+\frac{Q^{\gamma}_{n-1}}{4}\right)V_n^2+ \frac{1}{2}\sum_{l=1}^{n-1} e^{-2pU l\tau}\left(Q^{\gamma}_l-Q^{\gamma}_{l-1}\right)V_{n-l}^2-e^{-2pU n\tau}Q^{\gamma}_{n-1}V_0^2.
\end{equation}
\end{lemma}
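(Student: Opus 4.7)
The plan is to expand $V_n\cdot\mathcal{L}V_n$ term by term and then apply Young's inequality twice, once to each off-diagonal block, using the monotonicity $Q_l^{\gamma}\le Q_{l-1}^{\gamma}$ that was verified above (via the identity $\tau^{-1}\pi_n^{\gamma-1}=\sum_{j=0}^{n}w_j^{\gamma}$ for the \textrm{FBDF} case and the convexity of $(t_n-s)^{-\gamma}$ for the \textrm{PI} case). Recalling the convention $Q_{-1}^{\gamma}=0$, I would first peel off the diagonal $l=0$ contribution, which is exactly $Q_0^{\gamma}V_n^{2}$, and rewrite
\[
V_n\mathcal{L}V_n=Q_0^{\gamma}V_n^{2}+\sum_{l=1}^{n-1}\bigl(Q_l^{\gamma}-Q_{l-1}^{\gamma}\bigr)\bigl(e^{-pUl\tau}V_{n-l}\bigr)V_n-Q_{n-1}^{\gamma}\bigl(e^{-pUn\tau}V_0\bigr)V_n.
\]

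Next, for each $l=1,\ldots,n-1$ I would use $ab\le\frac{1}{2}(a^{2}+b^{2})$ with $a=e^{-pUl\tau}V_{n-l}$ and $b=V_n$; since $Q_l^{\gamma}-Q_{l-1}^{\gamma}\le 0$, multiplying the upper bound by a nonpositive number reverses the inequality and yields
\[
\bigl(Q_l^{\gamma}-Q_{l-1}^{\gamma}\bigr)\bigl(e^{-pUl\tau}V_{n-l}\bigr)V_n\ge\frac{1}{2}\bigl(Q_l^{\gamma}-Q_{l-1}^{\gamma}\bigr)\bigl(e^{-2pUl\tau}V_{n-l}^{2}+V_n^{2}\bigr).
\]
Summing over $l$, the $V_{n-l}^{2}$-pieces produce exactly the sum appearing in (\ref{eq:sss3}), while the $V_n^{2}$-pieces telescope to $\frac{1}{2}(Q_{n-1}^{\gamma}-Q_0^{\gamma})V_n^{2}$.

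For the last cross-term $-Q_{n-1}^{\gamma}(e^{-pUn\tau}V_0)V_n$, I would apply the weighted Young inequality $ab\le\frac{1}{2\alpha}a^{2}+\frac{\alpha}{2}b^{2}$ with $\alpha=\frac{1}{2}$, obtaining $(e^{-pUn\tau}V_0)V_n\le e^{-2pUn\tau}V_0^{2}+\frac{1}{4}V_n^{2}$; the multiplication by $-Q_{n-1}^{\gamma}<0$ again flips the inequality. Collecting the three contributions to the $V_n^{2}$-coefficient gives $Q_0^{\gamma}+\frac{1}{2}(Q_{n-1}^{\gamma}-Q_0^{\gamma})-\frac{1}{4}Q_{n-1}^{\gamma}=\frac{1}{2}Q_0^{\gamma}+\frac{1}{4}Q_{n-1}^{\gamma}$, matching (\ref{eq:sss3}) exactly; the $V_{n-l}^{2}$ and $V_0^{2}$ pieces align as stated.

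The only non-mechanical aspect is the tuning of Young's inequality: grouping $e^{-pUl\tau}$ with $V_{n-l}$ (rather than symmetrizing the two factors) is what introduces the $e^{-2pUl\tau}$ weight on $V_{n-l}^{2}$ in the resulting sum, and the particular choice $\alpha=\frac{1}{2}$ in the $V_0$-term is calibrated so that the coefficient of $V_n^{2}$ picks up exactly $\frac{1}{4}Q_{n-1}^{\gamma}$ and no more. I do not anticipate any essential obstacle beyond this bookkeeping, since the key sign information $Q_l^{\gamma}-Q_{l-1}^{\gamma}\le 0$ has already been established in the discussion preceding the lemma.
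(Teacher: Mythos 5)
Your proposal is correct and is essentially the paper's own argument: both expand $V_n\mathcal{L}V_n$, isolate the $l=0$ term as $Q_0^{\gamma}V_n^2$, apply Young's inequality to each cross term (with the sign of $Q_l^{\gamma}-Q_{l-1}^{\gamma}\le 0$ reversing the inequality, and the weight $\alpha=\tfrac12$ on the $V_0$ term), and telescope the $V_n^2$ coefficients to $\tfrac{Q_0^{\gamma}}{2}+\tfrac{Q_{n-1}^{\gamma}}{4}$. The only cosmetic difference is that the paper first factors out $e^{-2pUn\tau}$ and symmetrizes the exponential weights as $e^{pU(n-l)\tau}V_{n-l}$ and $e^{pUn\tau}V_n$, which is equivalent to your grouping.
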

\begin{proof}
Using  Young's inequality, one has
\begin{eqnarray}
 &&V_n\cdot\mathcal{L}V_n=e^{-2pU n\tau}\left(\sum_{l=0}^{n-1}e^{pU (n-l)\tau}\left(Q^{\gamma}_l-Q^{\gamma}_{l-1}\right)V_{n-l}-Q^{\gamma}_{n-1}V_0\right)\left(e^{pU n\tau}V_n\right)\\\nonumber
 &&\ge  Q_0^{\gamma}V_n^2+e^{-2pU n\tau}\sum_{l=1}^{n-1}\left(Q^{\gamma}_l-Q^{\gamma}_{l-1}\right)\frac{e^{2pU (n-l)\tau}V^2_{n-l}+e^{2pU n\tau}V^2_n}{2}
-e^{-2pU n\tau}Q^{\gamma}_{n-1}\left(V_0^2+\frac{e^{2pU n\tau}V^2_n}{4}\right),\\\nonumber
\end{eqnarray}
which arrives at the conclusion.
\end{proof}

In fact, when $pU$ is constant, from Lemma \ref{eq:Lemma4.1} one has
\begin{eqnarray}\label{eq:ssss1}
\left({}^SD_{*,t}^{\gamma}G, \,G\right)&=&e^{-2pU t}\left(e^{pU t}G_J,\,\frac{\partial}{\partial t}\big[k*(e^{pU t}G)\big](t)\right)-e^{-pU t}k(t)(g,\,G)\\\nonumber
& \ge&\frac{1}{2}{}_0D_{t}^{\gamma,2pU}\left\|G\right\|^2+\frac{1}{2}k(t)\left\|G(t)\right\|^2-k(t)\left(e^{-2pU t}\left\|g\right\|^2+\frac{\left\|G(t)\right\|^2}{4}\right)\\\nonumber
&=& \frac{1}{2}{}_0D_{t}^{\gamma,2pU}\left\|G\right\|^2+\frac{k(t)}{4}\left\|G(t)\right\|^2-e^{-2pU t}k(t)\left\|g\right\|^2,
\end{eqnarray}
and there exists $\lim\limits_{n\to\infty}Q^{\gamma}_{n-1}\to k(t_n)$ because of (\ref{eq:ssss2}). Then (\ref{eq:sss3}) can be regarded as a discrete analogue of (\ref{eq:ssss1}). 

\begin{theorem}\label{theorem:22}
The {\rm MGM} fully discrete schemes (\ref {eq:sss5}) are unconditionally stable.  Let $e^n=G_x(p,t_n)-G^n_J$. Then for the sufficiently regular solution $G_x(p,t)$, the error estimate of the {\rm PI} ({\rm or  FBDF}) scheme (\ref {eq:sss5}) is 
\begin{equation} \label{eq:err1-2}
\left\|e^n\right\|^2+ 2K_{\gamma,\alpha}\tau^{\gamma}\Gamma(2-\gamma)B\left(e^n,e^n\right)\le C\left(2^{J(\alpha-2d)}+\tau^{\gamma}2^{J(\alpha-2d)}+\tau^{4-2\gamma}\,\left({\rm or~}\tau^2\right)\right).
\end{equation}
 In addition, if $\left\|\Lambda G -G\right\|\stackrel{<}{\sim}2^{-Jd}\left\|G\right\|_{H^d(\Omega)}$ holds (it is obvious when  $\alpha=2$), then the convergence order related to space can be improved to $\mathcal{O}\left(2^{-2Jd}+\tau^{\gamma}2^{J(\alpha-2d)}\right)$. Here $d$ is the order of B-spline.
\end{theorem}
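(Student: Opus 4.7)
The plan is to mirror the argument of Theorem~\ref{theorem:11}, but using Lemma~\ref{eq:lemmass1} in place of Lemma~\ref{eq:lemma31} to supply the discrete coercivity of $\mathcal{L}$. First I would test \eqref{eq:sss5} with $v=G_J^n$ to obtain the identity
\[
(G_J^n,\mathcal{L}G_J^n) + K_{\gamma,\alpha} B(G_J^n,G_J^n) = (f^n,G_J^n),
\]
and apply Lemma~\ref{eq:lemmass1} componentwise (recall that $\|\cdot\|^2$ is an $L_2$-norm over the spatial wavelet coefficients, so the scalar estimate \eqref{eq:sss3} lifts at once). Using the coercivity \eqref{eq:coercive} to control $B(G_J^n,G_J^n)$ from below by $C_2\|G_J^n\|^2_{H^{\alpha/2}}$, and Young's inequality on $(f^n,G_J^n)$, one extracts the unconditional stability estimate
\[
\Bigl(\tfrac{Q_0^{\gamma}}{2}+\tfrac{Q^{\gamma}_{n-1}}{4}\Bigr)\|G_J^n\|^2 + K_{\gamma,\alpha} B(G_J^n,G_J^n) \;\lesssim\; \bigl|\tfrac{1}{2}\textstyle\sum_{l=1}^{n-1}e^{-2pUl\tau}(Q_{l-1}^{\gamma}-Q_l^{\gamma})\|G_J^{n-l}\|^2\bigr| + Q^{\gamma}_{n-1}\|G_J^0\|^2 + \tfrac{1}{K_{\gamma,\alpha}C_2}\|f^n\|^2,
\]
in which the cross terms involving $\|G_J^{n-l}\|^2$ are absorbed via a discrete Grönwall step exploiting $\sum_{l=1}^{n-1}(Q_{l-1}^\gamma-Q_l^\gamma)=Q_0^\gamma-Q_{n-1}^\gamma$ together with the monotonicity of $Q_n^\gamma$ established in \eqref{eq:ssss3}--\eqref{eq:ssss2}.

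For the error estimate I would split $e^n=\theta^n+\eta^n$ with $\theta^n:=G(t_n)-\Lambda G(t_n)$ the bilinear-form Ritz projection error and $\eta^n:=\Lambda G(t_n)-G_J^n\in S_J$. Subtracting the scheme from the weak form satisfied by $G(t_n)$ yields the error equation
\[
(\mathcal{L}\eta^n,v) + K_{\gamma,\alpha} B(\eta^n,v) = -(\mathcal{L}\theta^n,v) + (\mathcal{R}^n,v) \qquad \forall v\in S_J,
\]
where the consistency residual $\mathcal{R}^n$ gathers the differences $\mathcal{L}G^n-{}^SD_{*,t}^\gamma G(t_n)$ (of order $\mathcal{O}(\tau)$ for FBDF via \eqref{eq:FLMMs} and of order $\mathcal{O}(\tau^{2-\gamma})$ for PI via \eqref{eq:PIError}) plus spatial projection defects controlled by \eqref{project} and \eqref{eq:projectsss}. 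Because the Ritz projection is chosen with respect to $B(\cdot,\cdot)$, the $B(\theta^n,v)$ contribution vanishes; what remains is the time-memory term $(\mathcal{L}\theta^n,v)$, which is handled by splitting $\mathcal{L}\theta^n$ using its representation in terms of $Q_l^\gamma$ and invoking $\sum_l\tau Q_l^\gamma\lesssim t_n^{1-\gamma}$.

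Applying the stability estimate of the previous paragraph to $\eta^n$ with right-hand side $-\mathcal{L}\theta^n+\mathcal{R}^n$ produces
\[
\|\eta^n\|^2 + 2K_{\gamma,\alpha}\tau^{\gamma}\Gamma(2-\gamma)B(\eta^n,\eta^n) \;\lesssim\; \|\eta^0\|^2 + \tau^\gamma\normmm{\eta^0}^2 + \text{(temporal truncation)} + \text{(spatial projection)}.
\]
Bounding the temporal truncation in a discrete $L^2$-in-time sense gives $\mathcal{O}(\tau^{2})$ for FBDF and $\mathcal{O}(\tau^{4-2\gamma})$ for PI (squaring the pointwise $\mathcal{O}(\tau^{2-\gamma})$ and summing with weight $\tau Q_0^\gamma\sim \tau^{1-\gamma}$). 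The spatial contribution is $\mathcal{O}(2^{J(\alpha-2d)})$ from \eqref{project} with the loss of $\alpha/2$ derivatives in $B(\cdot,\cdot)$, plus the additional $\tau^\gamma 2^{J(\alpha-2d)}$ from the memory weights. A triangle inequality combining $\eta^n$ with $\theta^n$ yields the claimed estimate \eqref{eq:err1-2}. The sharpening to $\mathcal{O}(2^{-2Jd})$ under the stronger $L_2$-projection bound follows by replacing the use of \eqref{eq:projectsss} in estimating $\|\theta^n\|$.

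The main obstacle I anticipate is the simultaneous handling of the negative cross terms $\frac{1}{2}\sum_{l=1}^{n-1}e^{-2pUl\tau}(Q_l^\gamma-Q_{l-1}^\gamma)\|G_J^{n-l}\|^2$ and the negative initial term $-e^{-2pUn\tau}Q_{n-1}^\gamma\|G_J^0\|^2$ produced by Lemma~\ref{eq:lemmass1}: unlike in Theorem~\ref{theorem:11}, these do not telescope automatically, and controlling them requires a careful summation-by-parts together with the decay estimate \eqref{eq:ssss2} of $Q_{n-1}^\gamma$ and the positivity of $L_n^\gamma$. A secondary technical point is obtaining the sharp $\tau^{4-2\gamma}$ for PI: pointwise the residual is only $\mathcal{O}(\tau^{2-\gamma})$, and the doubling of the exponent comes from squaring and exploiting the weighted summation inequality $\tau\sum_{j=1}^n Q_{n-j}^\gamma\|\mathcal R^j\|^2\lesssim t_n^{1-\gamma}\max_j\|\mathcal R^j\|^2$.
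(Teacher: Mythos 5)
Your proposal follows essentially the same route as the paper: test with $v=G_J^n$, invoke Lemma \ref{eq:lemmass1} plus Young's inequality, close the stability bound by induction using $\sum_{l=1}^{n-1}(Q_{l-1}^{\gamma}-Q_l^{\gamma})+Q_{n-1}^{\gamma}=Q_0^{\gamma}$, then split $e^n=\theta^n+\eta^n$ via the Ritz projection so that $B(\theta^n,v)=0$, and bound the residual as a temporal truncation part (giving $\tau^{4-2\gamma}$ for PI, $\tau^2$ for FBDF) plus the memory term $\mathcal{L}(I-\Lambda)G^n$ controlled by $\sum_l\tau Q_l^{\gamma}\sim t_n^{1-\gamma}/\Gamma(2-\gamma)$. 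The only cosmetic difference is that the paper absorbs $(f^n,G_J^n)$ using $Q_{n-1}^{\gamma}\ge L_n^{\gamma}$ rather than the coercivity of $B$, which affects only constants.
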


\begin{proof}
Choosing $v=G^n_J$ in (\ref{eq:sss5}),  using Lemma \ref{eq:lemmass1} and Young's inequality one obtains
\begin{eqnarray}
 &&\left(\frac{Q_0^{\gamma}}{2}+\frac{Q^{\gamma}_{n-1}}{4}\right)\left\|G_J^n\right\|^2+K_{\gamma}B(G_J^n,\,G_J^n)\\\nonumber
&&\le\frac{1}{2}\sum_{l=1}^{n-1} \left(Q^{\gamma}_{l-1}-Q^{\gamma}_l\right)\left\|e^{-pU l\tau}G_J^{n-l}\right\|^2+Q^{\gamma}_{n-1}\left\|e^{-pU n\tau}G_J^0\right\|^2+\frac{Q^{\gamma}_{n-1}}{4}\left\|G_J^n\right\|^2+\Gamma(1-\gamma)t_n^{\gamma}\|f^n\|^2,
\end{eqnarray}
where $Q_{n-1}^{\gamma}\ge L_n^{\gamma}=\frac{t_n^{-\gamma}}{\Gamma(1-\gamma)}$ is used. Then
\begin{eqnarray}
 && Q_0^{\gamma}\left\|G_J^n\right\|^2+2K_{\gamma}B(G_J^n,\,G_J^n)\\\nonumber
&&\quad\le \sum_{l=1}^{n-1} \left(Q^{\gamma}_{l-1}-Q^{\gamma}_l\right)\left\|e^{-pU l\tau}G_J^{n-l}\right\|^2+Q^{\gamma}_{n-1}\left(2\left\|e^{-pU n\tau}G_J^0\right\|^2+2\Gamma^2(1-\gamma)t_n^{2\gamma}\|f^n\|^2\right).
\end{eqnarray}
Using mathematical induction and $\sum_{l=1}^{j-1} \left(Q^{\gamma}_{l-1}-Q^{\gamma}_l\right)+Q^{\gamma}_{j-1}=Q_0^{\gamma}$, one obtains
\begin{equation}
\left\| G_J^n\right\|^2+2K_{\gamma,\alpha}/Q_0^{\gamma}B\left(G_J^n,G_J^n\right)\le2\left\|G_J^0\right\|^2+2\Gamma^2(1-\gamma)T^{2\gamma}\max\limits_n\left\|f^n\right\|^2,
\end{equation}
for $n=1,2,\cdots,N$.

Now, we consider the convergence results. Let
\begin{eqnarray*}
&& e^n=G(t_n)-G^n_J=\big(G(t_n)-\Lambda G(t_n)\big)+\big(\Lambda G(t_n)-G^n_J\big)=\theta^n+\eta^n,\\[3pt]
&&\mathcal{R}^n=\left(\mathcal{L}G^n-{}^SD_{*,t_n}^{\gamma}G\right)-\mathcal{L}\left(I-\Lambda\right)G^n=\mathcal{R}_1^n-\mathcal{R}_2^n,
\end{eqnarray*}
where $G(t_n)=G_x\left(p,t_n\right)$. Using the property $B(\theta^n,v)=0\,\,\forall v\in S_J$ at once results in
\begin{equation}
\left\| \eta^n\right\|^2+2K_{\gamma,\alpha}/Q_0^{\gamma}B\left(\eta^n,\eta^n\right)\le2\left\|\eta^0\right\|^2+C\left(\max\limits_n\|\mathcal{R}_1^n\|^2+\max\limits_n\left\|\mathcal{R}_2^n\right\|^2\right).
\end{equation}
And it holds that
\begin{eqnarray}
\left\|\mathcal{R}_2^n\right\|&=&\left\|e^{-pUt_n}\sum_{l=0}^{n-1}Q^{\gamma}_{n-1-l}\left(e^{pU t_{l+1}}\theta^{l+1}- e^{pU t_l}\theta^{l}\right)\right\|\\\nonumber
&\le&\sum_{l=0}^{n-1}Q^{\gamma}_{n-1-l} \int_{t_l}^{t_{l+1}}\left\|D_t^1\theta\right\|dt\le C\sum_{l=0}^{n-1}\tau Q^{\gamma}_{n-1-l}\max\limits_{0\le t\le T}\left\|D_t^1\theta\right\|.
\end{eqnarray}
Note that
\begin{equation}
 \sum_{l=0}^{n-1}(-1)^l{\gamma-1 \choose l}=(-1)^{n-1}{\gamma-2\choose n-1}=\frac{n^{1-\gamma}}{\Gamma(2-\gamma)}+\mathcal{O}(n^{-\gamma}),
\end{equation}
where the Stirling formula is used. Hence
\begin{equation}
\sum_{l=0}^{n-1}\tau Q^{\gamma}_{n-1-l}=
\left\{\begin{array}{ll}\frac{t_n^{1-\gamma}}{\Gamma(2-\gamma)}+\tau \mathcal{O}(t_n^{-\gamma}),&for\,{\rm FBDF},\\[5pt]{}_0D_{t_n}^{-(1-\gamma),0}1=
\frac{t_n^{1-\gamma}}{\Gamma(2-\gamma)},& for\,{\rm PI}.
\end{array}\right.
\end{equation}
Together with (\ref{eq:projectsss}) and the triangle inequality, the desired estimate is obtained.
\end{proof}

\section{Algorithm Implementations and Numerical Results}
Now, we focus on the implementations of the algorithms presented in the above sections. Four numerical examples are simulated. The first one is to confirm the theoretical results and the effectiveness of the proposed schemes. The second one is to describe the implementation of the wavelet precondition and show its acceleration effects. And the third and fourth ones are used to show how to solve the more general Feynman-Kac equations.
In the following Tables, `Err-2' denotes the $L_2$ error; `Err-1' is the square root of the left hand side of (\ref{eq:err1-1}) or (\ref{eq:err1-2}); and `N' denotes the number of points used in time direction.

To implement the algorithms above, the key is to firstly dig out the structure of the stiffness matrix, then find the
efficient way of generating and storing the matrix, and solving the resulting algebra system.  The following observation can be used to reduce the computation and storage cost of generating the differential matrix of the MGM schemes.
Let
\begin{eqnarray}
 &&~\phi^2(x):=x_{+}-2(x-1)_{+}+(x-2)_{+};\\[5pt] \label{eq:spline2}
&&\left\{\begin{array}{l}
\phi^{3}(x):=\frac{1}{2}x_{+}^2-\frac{3}{2}(x-1)_{+}^2+\frac{3}{2}(x-2)_{+}^2-\frac{1}{2}(x-3)_{+}^2,\\[5pt]
\phi^{3,0}(x):=-\frac{3}{2}x_{+}^2+2x_{+}+2(x-1)_{+}^2-\frac{1}{2}(x-2)_{+}^2,
\end{array}\right.\label{eq:spline3}
\end{eqnarray}
which have the well property of symmetry $\phi^d(x)=\phi^d(d-x)$ for $d=2,\,3$. Defining
\begin{eqnarray}
&&\qquad\qquad\qquad\Phi_j^{2}(x):=\left\{\phi^2_{j,k}(x):=2^{j/2}\phi^2(2^jx-k)\big|_{k=0}^{2^j-2}\right\},\\
&&\Phi_j^{3}(x):=\left\{2^{j/2}\phi^{3,0}\left(2^jx\right),\,\phi^3_{j,k}(x):=2^{\frac{j}{2}}\phi^3\left(2^jx-k\right)\left|_{k=0}^{2^j-3}\right.,\,2^{j/2}\phi^{3,0}\left(2^j(1-x)\right)\right\},\nonumber
\end{eqnarray}
then $S_j^{2}={\rm span}\left\{\Phi_j^{2}(x)\right\}$ and $S_j^{3}={\rm span}\left\{\Phi_j^{3}(x)\right\}, j=J_0,J_0+1,\cdots,$  constitute the MRA of $L^2(0,1)$ for $d=2$ and $ d=3$, respectively, with the regularity  $S_j^{d}\subset H^{s}(0,1),s< d-\frac{1}{2}$.
Now, we consider the calculation of the left Riemann-Liouville differential matrix
\begin{equation}
A^d_l:=\left({}_0D_x^{\frac{\alpha}{2},0}\Phi_J^d,{}_xD_1^{\frac{\alpha}{2},0}\Phi_J^d\right)=-\left({}_0D_x^{\alpha-1,0}\Phi_J^d,\frac{d}{dx}\Phi_J^d\right),\quad d=2,\,3.
\end{equation}
Obviously, its transpose is the right Riemann-Liouville differential matrix $A_r^d:=\left({}_xD_1^{\frac{\alpha}{2},0}\Phi_J^d,{}_0D_x^{\frac{\alpha}{2},0}\Phi_J^d\right)$.
One can see that the bases of $S^{2}_J$ are obtained by dilating and translating of a single function $\phi^2(x)$, being the same  for  $S_J^3$ except the one at the boundaries, these features allow us to use the following fact:
\begin{eqnarray*}
  &&\left({}_0D_x^{\alpha-1,0}\phi_{J,k_1}^d(x),\frac{d}{dx}\phi_{J,k2}^d(x)\right)\\
 &&\quad=\frac{2^{J}}{\Gamma(2-\alpha)}\int_{2^{-J}k_2}^{2^{-J}(d+k_2)}\int_0^x \left(x-s\right)^{1-\alpha}\mathrm{d}\phi^{d}\left(2^Js-k_1\right) \,\mathrm{d} \phi^{d}\left(2^Jx-k_2\right)\\
 &&\quad=\frac{2^J}{\Gamma(2-\alpha)}\int_{0}^{d}\int_0^{2^{-J}(x+k_2)} \left(2^{-J}\left(k_2+x\right)-s\right)^{1-\alpha} \mathrm{d}\phi^{d}\left(2^Js-k_1\right)\, \mathrm{d}\phi^{d}\\
 &&\quad=\frac{2^{2J\alpha}}{\Gamma(2-\alpha)}\int_{0}^{d}\int_{-k_1}^{x+k_2-k_1}(k_2+x-s-k_1)^{1-\alpha} \mathrm{d}\phi^{d} \mathrm{d}\phi^{d}\\
 &&\quad=\frac{2^{2J\alpha}}{\Gamma(2-\alpha)}\int_{0}^{d}\int_{0}^{x+k_2-k_1}(x-s+k_2-k_1)^{1-\alpha} \mathrm{d}\phi^{d} \mathrm{d}\phi^{d},
 \end{eqnarray*}
 whose result just depends on the value $k_2-k_1$.
Hence the matrix $A^d_l$ has a quasi-Toeplitz structure, viz., for $d=2$, it is a Toeplitz matrix; and for $d=3$, using the symmetry of $\phi^3(x)$ yields a  structure of the form:
\begin{eqnarray}
&& \left(\begin{array}{ccc}a_1&r(\boldsymbol {\mathrm{a_2}})^T&0\\ \boldsymbol {\mathrm{a_1}}&M_{(2^J-2)\times (2^J-2)}&\boldsymbol {\mathrm{a_2}}\\a_2&r(\boldsymbol {\mathrm{a_1}})^T&a_1\end{array}\right)_{2^J\times 2^J},
\end{eqnarray}
where $a_i$ are real numbers; $\boldsymbol{\mathrm{a_i}}$ are vectors, $r(\boldsymbol{\mathrm{a_i}})$  the reverse order of $\boldsymbol{\mathrm{a_i}}$; and $M_{N\times N}$ is a Toeplitz matrix.

Therefore, together with the property of compact support of the bases, one only needs to compute $2^J$ and $2(2^J+1)$ elements for $d=2$ and $d=3$, respectively, rather than the full matrix obtained by the other variational methods.  And the special structure of the bases allows us to achieve even more. Letting $H(x)$ be the Heaviside function, it formally holds that
 \begin{equation}
 {}_0 D_x^{\alpha-1}\left(H(x-x_0)v(x)\right)=H(x-x_0){}_{x_0}D_x^{\alpha-1}v(x), \quad  x_0\ge 0.
 \end{equation}
Combining with the well-known formulae
  \begin{eqnarray*}
   &&{}_aD_x^{\alpha-1}(x-a)^{\nu}=\frac{\Gamma(\nu+1)(x-a)^{\nu-\alpha+1}}{\Gamma(\nu-\alpha+2)}, \quad \nu\in \mathcal{N}, \\
   &&(b-ax)^k_+=(b-ax)^k+(-1)^{k-1}(ax-b)_+^k,\quad k\in \mathcal{N}^+,
   \end{eqnarray*}
for $b/a\ge0, \, k\in \mathcal{N}^+$, it yields that
  \begin{eqnarray*}
  && {}_0 D_x^{\alpha-1}(ax-b)_+^k=a^{\alpha-1}\frac{\Gamma(k+1)}{\Gamma(k-\alpha+2)}(ax-b)_+^{k-\alpha+1},\\
   &&{}_0 D_x^{\alpha-1}(b-ax)_+^k=(-1)^{k-1}{}_0 D_x^{\alpha-1}(ax-b)_+^k+{}_0 D_x^{\alpha-1}(b-ax)^k.
   \end{eqnarray*}
 Hence, there exist
 \begin{eqnarray} \label{eq:formale-1}
 &&M^2(x):={}_0 D_x^{\alpha-1}\phi^{2}(x)=\frac{1}{\Gamma(3-\alpha)}\big(x_{+}^{2-\alpha}-2(x-1)^{2-\alpha}_{+}+(x-2)^{2-\alpha}_{+}\big),\\[5pt]\nonumber
 && M^{3}(x):={}_0 D_x^{\alpha-1}\phi^{3}(x)
 =\frac{1}{\Gamma(4-\alpha)}\left(x_{+}^{3-\alpha}-3(x-1)_{+}^{3-\alpha}+3(x-2)_{+}^{3-\alpha}-(x-3)_{+}^{3-\alpha}\right),\\[5pt]\nonumber
 &&M^{3,0}(x):={}_0 D_x^{\alpha-1}\phi^{3,0}(x)
        =\frac{1}{\Gamma(4-\alpha)}\left(-3x_{+}^{3-\alpha}+4(x-1)_{+}^{3-\alpha}-(x-2)^{3-\alpha}_{+}\right)+\frac{2}{\Gamma(3-\alpha)}x_+^{2-\alpha},\\[5pt]\nonumber
 &&M^{3,1}(x,l):={}_0 D_x^{\alpha-1}\phi^{3,0}(l-x)=\frac{2}{\Gamma(3-\alpha)}(x-l)_+^{2-\alpha}\\ \nonumber
 &&\qquad\qquad\qquad\qquad+\frac{1}{\Gamma(4-\alpha)}\left(3(x-l)^{3-\alpha}_{+}-4(x-l+1)^{3-\alpha}_{+}+(x-l+2)_{+}^{3-\alpha}\right),
\end{eqnarray}
and
\begin{eqnarray}\label{eq:formale-2}
&&\qquad\qquad\qquad\quad\qquad{}_0 D_x^{\alpha-1}\left(2^{J/2}\phi^{d}\left(2^Jx-k\right)\right)=2^{J(\alpha-1/2)}M^{d}\left(2^Jx-k\right),\quad d=2,\,3.\\[5pt]
&&{}_0 D_x^{\alpha-1}\left(2^{J/2}\phi^{3,0}(2^Jx)\right)=2^{J\left(\alpha-1/2\right)}M^{3,0}\left(2^Jx\right),\,\,
{}_0 D_x^{\alpha-1}\left(2^{J/2}\phi^{3,0}\left(2^J\left(1-x\right)\right)\right)=2^{J(\alpha-1/2)}M^{3,1}\left(2^Jx,2^J\right).\nonumber
\end{eqnarray}
Thus for both of the matrixes $A^d_l$ and $M$,  the main task is to generate their first columns which can be got by exactly calculating the inner products of the piecewise functions. The presented techniques actually apply to all the values of $d$.

For the general left and right Riemann-Liouville different matrixes, one has
 \begin{eqnarray}
&&A^d_l:=\left({}_0D_x^{\frac{\alpha}{2},0}\Phi_J^d,{}_xD_1^{\frac{\alpha}{2},0}\left(e^{-pUj\tau}\Phi_J^d\right)\right)=-\left({}_0D_x^{\alpha-1,0}\Phi_J^d,\frac{d}{dx}\left(e^{-pUj\tau}\Phi_J^d\right)\right), \label{eq:right11}\\
&&A^d_r:=\left({}_xD_1^{\frac{\alpha}{2},0}\Phi_J^d,{}_0D_x^{\frac{\alpha}{2},0}\left(e^{-pUj\tau}\Phi_J^d\right)\right)=\left({}_xD_1^{\alpha-1,0}\Phi_J^d,\frac{d}{dx}\left(e^{-pUj\tau}\Phi_J^d\right)\right),\quad d=2,\,3.\label{eq:right12}
\end{eqnarray}
Combining the symmetry of $\phi^d(x)$ with the technique given in Example 4.3 below, we obtain
\begin{eqnarray}\label{eq:formale-2}
&&{}_x D_1^{\alpha-1}\left(2^{J/2}\phi^{d}(2^Jx-k)\right)=2^{J\left(\alpha-1/2\right)}M^{d}\left(2^J\left(1-x\right)-\left(2^J-d-k\right)\right),\quad d=2,\,3,\\
&&{}_x D_1^{\alpha-1}\left(2^{J/2}\phi^{3,0}\left(2^Jx\right)\right)=2^{J\left(\alpha-1/2\right)}M^{3,1}\left(2^J\left(1-x\right),2^J\right),\\
&&{}_x D_1^{\alpha-1}\left(2^{J/2}\phi^{3,0}\left(2^J\left(1-x
\right)\right)\right)=2^{J\left(\alpha-1/2\right)}M^{3,0}\left(2^J\left(1-x\right)\right).
\end{eqnarray}
Therefore after an exact calculation of the fractional derivatives of the bases, the right hand terms in  (\ref{eq:right11}) and (\ref{eq:right12}) can be obtained by Gauss quadrature.

The nonlocal characteristics of fractional operator essentially lead to dense discrete system, which results in the heavy computation cost, especially for the high dimensional problem. By carefully choosing the bases, the potential quasi-Toeplitz structure can be dug out, which makes the cost of matrix vector product as $\mathcal{O}(N\log(N))$ ($N$ is the number of degrees of freedom) \cite{Wang:12}. At the same time, based on the wavelet bases it is easy to construct the preconditioner only with the cost $\mathcal{O}(N)$ \cite{Zhang:15}.
For example, for $d=2$, we define
\begin{eqnarray}
 &&\psi^{2,0}(x)=\frac{9}{10}\phi^2(2x-1)-\frac{3}{5}\phi^2(2x-2)+\frac{1}{10}\phi^2(2x-3),\\[4pt]\nonumber
&& \psi^{2}(x)=\phi^{2}(2x-1)-\frac{3}{5}\left(\phi^2(2x)+\phi^2(2x-2)\right)+\frac{1}{10}\left(\phi^2(2x+1)+\phi^2(2x-3)\right).\\ [4pt]\nonumber
&&\Psi_j^{2}:=\left\{2^{j/2}\psi_j^{2,0}(2^jx),\,2^{j/2}\psi^{d}(2^jx-k)\big|_{k=0}^{2^j-2},\,2^{j/2}\psi^{2,0}\left(2^j(1-x)\right)\right\}.
\end{eqnarray}
Then $W^2_j={\rm span}\{\Psi_j^{2}\}$ is the complement space of $S^2_j$ into $S^2_{j+1}$. And it is easy to obtain the corresponding refinement matrixes $M_{j,0}^2$ and $ M_{j,1}^2$ appeared in (\ref{Refine-raltion}), which can be implemented by FWT. Therefore the iteration method can be efficiently used to solve the corresponding matrix equation; a specific realizing  process for a two-dimension model will be given in Example 4.2.
\begin{example}
Consider (\ref{eq:model1}) on $(0,1)$ with $ K_{\gamma,\alpha}=-2\cos(\alpha\pi/2), U(x)=1$, $G_x(p,0)=sin(\pi x)$, and
\begin{eqnarray}
f(x,p,t)=\sigma e^{-pU(x) t} t^{\sigma-1}\sin(\pi x)-\left(\frac{\Gamma(\sigma+1)}{\Gamma(\sigma+\gamma)}t^{\sigma+\gamma-1}+\frac{t^{\gamma-1}}{\Gamma(\gamma)}\right)e^{-pU(x)t}P(x),
\end{eqnarray}
which is equivalent to (\ref{eq:nonsym}) with
\begin{eqnarray}
f(x,p,t)=\frac{\Gamma(\sigma +1)}{\Gamma(\sigma+1-\gamma)}e^{-pU(x) t}t^{\sigma-\gamma}\sin(\pi x)-e^{-pU(x)t}(t^{\sigma}+1)P(x),
\end{eqnarray}
and $P(x)={}_0D_x^{\alpha,0}\sin(\pi x)+{}_xD_1^{\alpha,0}\sin(\pi x)$. Then the exact solution is $G_x(p,t)=e^{-pU(x) t}\left(t^{\sigma}+1\right)\sin(\pi x)$.
\end{example}
The results listed in Tables \ref{tab:1-1}, \ref{tab:1-2}, and \ref{tab:1-2-2}, where (I, FBDF) denotes the FBDF scheme in Subsection 3.1 (Numerical schemes I), etc, confirm the theoretical convergence order w.r.t. time and space, respectively; especially the convergence rates in Table \ref{tab:1-2-2} also reflect the impact of the error term $\mathcal{O}\left(\tau^{\gamma}2^{J(\alpha-2d)}\right)$.  Note that the following techniques have been used to calculate the right vector term $\int_0^1f(x,t)\Phi_J(x)dx$:
\begin{eqnarray}\label{eq:Gauss}
P(x) &=&-{}_0D_x^{-(2-\alpha),0}\left(\pi^2\sin(\pi x)+ \sum_{k=1}^n\frac{(-1)^k \pi^{2k+1}x^{2k-1}}{(2k-1)!}\right)\\[5pt]\nonumber
&&-{}_xD_1^{-(2-\alpha),0}\left(\pi^2\sin(\pi(1- x))+\sum_{k=1}^n\frac{(-1)^k \pi^{2k+1}(1- x)^{2k-1}}{(2k-1)!}\right)\\[5pt]\nonumber
&&+\sum_{k=0}^n\frac{(-1)^k \pi ^{2k+1}}{\Gamma(2k+2-\alpha)}(1-x)^{2k+1-\alpha}+\sum_{k=0}^n\frac{(-1)^k \pi ^{2k+1}}{\Gamma(2k+2-\alpha)}x^{2k+1-\alpha},\, n\ge 1;
\end{eqnarray}
\begin{eqnarray}
&& \int_0^b\left({}_0D_x^{-(2-\alpha),0}h_1(x)\right)\,h_2(x)dx \\[3pt]\nonumber
&& =\frac{2^{\alpha-5}b^{3-\alpha}}{\Gamma(2-\alpha)}\int_{-1}^1\underbrace{(\xi+1)^{2-\alpha}}_{weight}h_2\left(\frac{\xi+1}{2}b\right)
\int_{-1}^1\underbrace{(1-\eta)^{1-\alpha}}_{weight}h_1\left(\frac{(\eta+1)(\xi+1)}{4}b\right)d\eta d\xi;
\end{eqnarray}

\begin{eqnarray}
&&\int_a^1\left({}_xD_1^{-(2-\alpha),0}h_1(x)\right)\,h_2(x)dx=
\int_{-1}^1\underbrace{(1-\xi)^{2-\alpha}}_{weight}h_2\left(\frac{1+a}{2}+\frac{1-a}{2}\xi\right)\\\nonumber
&&\qquad\quad  \times\int_{-1}^1\underbrace{(1+\eta)^{1-\alpha}}_{weight}h_1\left(\frac{1+\eta}{2}+\frac{1-\eta}{2}\left(\frac{1+a}{2}+\frac{1-a}{2}\xi\right)\right)d\eta d\xi\times \frac{2^{\alpha-5}(1-a)^{3-\alpha}}{\Gamma(2-\alpha)};\\[5pt]\nonumber
&& \int_a^b h(x)dx=\left(\int_0^b-\int_0^a\right)h(x)dx=\left(\int_a^1-\int_b^1\right)h(x)dx.
\end{eqnarray}
Thus the integrals of the first two and the last two terms in (\ref{eq:Gauss}) can be calculated by the Gauss-Lobatto-Jacobi quadrature and the analytical formula, respectively.
\begin{table}[!h t b p]\fontsize{6.0pt}{11pt}\selectfont
\begin{center}
 \caption{Convergence rate in time for Example 4.1 with $p=3, \sigma=2, \alpha=1.6,d=2,J=9$, and $T=0.5$.}
\begin{tabular}{ cc|c c |c c|c c|c c}
  \hline
  $\gamma$ &$N$    &\multicolumn{2}{c|}{($\mathrm{I}$, FBDF)}  &\multicolumn{2}{c|}{($\mathrm{I}$, PI)}   & \multicolumn{2}{c|}{($\mathrm{II}$, FBDF)} & \multicolumn{2}{c}{($\mathrm{II}$, PI)} \\
  \cline{3-8}\cline{9-10}
       &         & Err-2     & Rate                  &Err-2  & rate                 &Err-2  & Rate                        & Err-2 &Rate \\
   \hline
          &$40$    & 6.5141e-04      & ---              & 1.2072e-04     & ---       &  5.9163e-05    & ---          &  5.4194e-06  & --- \\
  $0.4$   &$60$    & 4.3413e-04     & 1.0008           & 6.8519e-05     & 1.3968     &  3.9451e-05    &0.9994        & 2.8337e-06  &1.5992\\
          &$80$    & 3.2553e-04     & 1.0007           & 4.5816e-05     & 1.3990     &  2.9582e-05    &1.0008       &  1.7823e-06  &1.6118\\
\hline
         &$40$    & 3.5561e-04      & ---               &  1.1469e-05   &  ---       & 1.5873e-04     & ---          &  6.8923e-05  & --- \\
  $0.8$  &$60$    & 2.3722e-04     & 0.9985            &  5.5719e-06   & 1.7804      & 1.0591e-04     & 0.99796       & 4.2399e-05  &1.1983\\
         &$80$    & 1.7796e-04     & 0.9991            &  3.3288e-06   & 1.7906      &  7.9456e-05     &0.99893      & 3.0023e-05 & 1.1998\\
\hline
\end{tabular} \label{tab:1-1}
\end{center}
\end{table}
\begin{table}[!h t b]\fontsize{6.0pt}{11pt}\selectfont
\begin{center}
 \caption{Convergence rate in space for Example 4.1  with $p=3,\sigma=2,\alpha=1.2, \gamma=0.6$, and $T=0.5$.}
\begin{tabular}{cc|c c |c c|c c|c c}
  \hline
  $(N,d)$& $J$    &\multicolumn{2}{c|}{($\mathrm{I}$, FBDF)}  &\multicolumn{2}{c|}{($\mathrm{I}$, PI)}   & \multicolumn{2}{c|}{($\mathrm{II}$, FBDF)} & \multicolumn{2}{c}{($\mathrm{II}$, PI)} \\
  \cline{3-8}\cline{9-10}
             &        & Err-1      & Rate                  &Err-1  & rate                 &Err-1  & Rate                        & Err-1 &Rate \\
   \hline
            &$4$  &  8.1510e-04    & ---               & 8.4143e-04   &  ---       & 7.4549e-04    & ---            & 7.6267e-04  & --- \\
$(2^{2J},2)$& $5$ &  2.0346e-04    & 2.0022            &2.0926e-04     &2.0075     &1.8564e-04     & 2.0057          & 1.8992e-04 &2.0057 \\
            &$6$  & 5.0855e-05     & 2.0003            & 5.2191e-05   &2.0035     & 4.6337e-05     & 2.0023          & 4.6337e-05  &2.0024\\
\hline
             &$3$  & 1.6566e-04   & ---                & 1.1864e-04    &  ---       & 1.2261e-04    & ---          &   1.1092e-04  & --- \\
$(2^{3J},3)$ &$4$  &1.9311e-05     & 3.1007            & 1.2894e-05   & 3.2018      & 1.3849e-05    & 3.1463        & 1.2171e-05  & 3.1879\\
             &$5$ &2.2987e-06    & 3.0705              & 1.4502e-06   &  3.1524     & 1.6078e-06     & 3.1066        &  1.3834e-06  & 3.1372\\
 \hline
\end{tabular} \label{tab:1-2}
\end{center}
\end{table}
\begin{table}[!h t b]\fontsize{6.0pt}{11pt}\selectfont
\begin{center}
 \caption{Convergence rate in space for Example 4.1  with $p=3,\sigma=2,\alpha=1.2$, and $T=0.5$.}
\begin{tabular}{cc|c c |c c|c c|c c}
  \hline
   & $J$    &\multicolumn{2}{c|}{$(d=2,\gamma=0.4,N=2^{2J})$}  &\multicolumn{2}{c|}{$(2,0.7,2^{2J})$}   & \multicolumn{2}{c|}{$(3,0.2,2^{3J})$} & \multicolumn{2}{c}{$(3,0.4,2^{3J})$} \\
  \cline{3-8}\cline{9-10}
             &        & Err-1      & Rate                  &Err-1  & rate                 &Err-1  & Rate                        & Err-1 &Rate \\
   \hline
                    &$3$  & 4.8153e-03    & ---              & 2.8744e-03     &  ---       & 3.3202e-04   & ---           &  1.9048e-04 & --- \\
($\mathrm{I}$, PI)  & $4$ &1.3645e-03     & 1.8193           & 6.7345e-04     &2.0936      & 5.0044e-05    & 2.7300       & 2.3496e-05  &3.0192\\
                    &$5$ &  3.8919e-04     & 1.8098          &1.5912e-04     & 2.0814      & 7.6693e-06     &2.7060        & 2.9378e-06  & 2.9996\\
\hline
                     &$3$  &4.3742e-03    & ---               &  2.5524e-03    &  ---       & 3.1818e-04    & ---          & 1.8400e-04  & --- \\
($\mathrm{II}$, FBDF)&$4$  &1.2314e-03    & 1.8287            &  5.9800e-04    & 2.0937    & 4.8008e-05      & 2.7285       & 2.2717e-05  &3.0179\\
                     &$5$  & 3.4975e-04    & 1.8159            & 1.4137e-04    & 2.0807     & 7.3572e-06     &2.7060        &  2.8386e-06  &3.0005\\
 \hline

\end{tabular} \label{tab:1-2-2}
\end{center}
\end{table}

In \cite{Deng:07}, the author has developed the non-uniform discretization to the time fractional derivative. Here we point out that this discretization can also be used in the schemes of this paper to improve the convergence or reduce the impact of initial singularity. Given any mesh $0=t_0<t_1<\cdots,<t_n<\cdots<t_N=T$, based on (\ref{eq:PI}) it yields a discrete approximation of  ${}^SD_{*,t}^{\gamma}v(t)$, provided as
\begin{equation}
\tilde{\mathcal{L}}^{\gamma}V_n=\sum_{l=0}^{n-1}e^{2pU( t_{n-l}-t_n )}\left(\tilde{Q}_l^{\gamma}-\tilde{Q}_{l-1}^{\gamma}\right)V_{n-l}-e^{-pUt_n}\tilde{Q}_{n-1}^{\gamma}V_0,
\end{equation}
where
\begin{equation}
\tilde{Q}^{\gamma}_{j}={\Gamma(1-\gamma)}^{-1}\int_{t_{n-1-j}}^{t_{n-j}}(t_n-s)^{-\gamma}\tau_{n-1-j}^{-1}ds,\quad j=0,\cdots,n-1, ~ \tilde{Q}_{-1}^\gamma=0.
\end{equation}
Then it holds that
\begin{eqnarray}
\tilde{Q}^{\gamma}_0>\tilde{Q}^{\gamma}_1>\cdots>\tilde{Q}^{\gamma}_{n-1}>0\quad\mbox{and}
\quad \tilde{L}_n^{\gamma}:=\frac{t_n^{-\gamma}}{\Gamma(1-\gamma)}\le\tilde{Q}^{\gamma}_{n-1}.
\end{eqnarray}
So one has the inequalities similar to (\ref{eq:sss3}) and (\ref{eq:lemma22_1}), respectively,  given as
\begin{eqnarray}
 &&V_n\cdot\tilde{\mathcal{L}}^{\gamma}V_n\ge\left(\frac{\tilde{Q}_0^{\gamma}}{2}+\frac{\tilde{Q}^{\gamma}_{n-1}}{4}\right)V_n^2+ \frac{1}{2}\sum_{l=1}^{n-1} e^{2pU( t_{n-l}-t_n )}\left(\tilde{Q}^{\gamma}_l-\tilde{Q}^{\gamma}_{l-1}\right)V_{n-l}^2-e^{-2pU t_n}\tilde{Q}^{\gamma}_{n-1}V_0^2,\\[5pt]
&&\qquad\qquad\qquad\qquad\sum_{n=0}^{M-1}\left(\tilde{\mathcal{L}}^{1-\gamma}\, V_n+e^{-pU t_n}\,\tilde{L}^{1-\gamma}_n\,V_0\right)V_n\ge 0,\quad M=1,2,\cdots,N,
\end{eqnarray}
where the diagonal dominance of the quadratic form matrix and the Gershgorin circle theorem ensure the last inequality holds.
The primal PI scheme $\mathrm{I}$ needs to be made some changes, denoted as $\mathrm{I}'$, i.e., replace (\ref{eq:L1-1}) and (\ref{eq:L1-21}) with $\mathcal{L}_1G_J^n=\frac{G_J^n-e^{pU(t_{n-1}-t_n)}G_J^{n-1}}{\tau}$ and $\mathcal{L}_2G_J^n=\tilde{\mathcal{L}}^{1-\gamma}G_J^n+\frac{e^{-pUt_n}G_J^0}{t_n^{1-\gamma}\Gamma(\gamma)}$, respectively. The primal PI scheme $\mathrm{II}$ keeps the original form but with variable time sizes. Both the full discrete schemes  (\ref{eq:fulldiscrete}) and (\ref{eq:sss5}) with non-uniform time stepping are still stable; and their proofs are similar to the ones of Theorems \ref{eq:theorem1} and \ref{theorem:22}. The numerical results with uniform mesh and grad mesh $t_j=(j/N)^{\upsilon}T,\,\upsilon>1, j=0,1,\cdots,N$ are listed in Table \ref{tab:1-4}, which show the advantages of non-uniform discretizaion.


%
\begin{table}[!h t b]\fontsize{6.0pt}{12pt}\selectfont
\begin{center}
 \caption{The $L_2$ numerical errors for Example 4.1 with  the uniform and grad time mesh, where $\alpha=1.7,\sigma=0.3,\nu=2, \gamma=0.8, d=2, J=9$, and $T=0.5$. }
\begin{tabular}{ cc|c  |c |c |c }
  \hline
     &$N$    &($\mathrm{I}'$, PI), $p=0$ &($\mathrm{I}'$, PI), $p=5$  & ($\mathrm{II}$, PI), $p=0$ & ($\mathrm{II}$, PI), $p=5$\\
\hline
             &$60$    & 4.9355e-04         &  4.0513e-05            &  7.0536e-03             &5.7899e-04    \\
    Uniform  &$80$    & 4.0772e-04          & 3.3468e-05            &  6.2912e-03          &5.1642e-04   \\
             & $100$  &3.5354e-04           &  2.9020e-05           & 5.7783e-03         & 4.7431e-04  \\
 \hline
            &$60$    & 2.5797e-04           &  2.1176e-05            & 3.6148e-03            & 2.9672e-04     \\
    Grad    &$80$    & 1.8862e-04           &   1.5483e-05          &  3.0007e-03        & 2.4632e-04  \\
            &$100$   & 1.4824e-04           &  1.2168e-05              &2.6035e-03          & 2.1371e-04  \\
 \hline

\end{tabular} \label{tab:1-4}
\end{center}
\end{table}

\begin{example} We consider the two-dimensional problem
\begin{eqnarray}\label{eq:example2}
{}^SD_{*,t}^\gamma G_{x,y}(p,t)={}_0D_x^{\alpha,0}G_{x,y}(p,t)+{}_0D_y^{\alpha,0}G_{x,y}(p,t)+f(x,y,p,t),\nonumber
\end{eqnarray}
on $(0,1)\times(0,1)$ with the initial condition $G_{x,y}(p,0)=xy^2(1-x)^2(1-y)$ and homogeneous boundary condition, and
\begin{eqnarray}
f(x,y,p,t)&=&\frac{e^{-pU t} t^2}{\Gamma(3-\alpha)}\bigg[\Big((2-\alpha)x^{1-\alpha}-4x^{2-\alpha}+\frac{6}{3-\alpha}x^{3-\alpha}\Big)y^2(1-y) \\[3pt]
&&+\Big(2y^{2-\alpha}-\frac{6}{3-\alpha}y^{3-\alpha}\Big)x(1-x)^2\bigg]+\frac{\Gamma(3+\gamma)}{2}t^2e^{-pU  t}x(1-x)^2y^2(1-y).\nonumber
\end{eqnarray}
It is easy to see that the exact solution is $G(x,y,t)=e^{-pU t}t^{2+\gamma} x(1-x)^2y^2(1-y)$.
\end{example}

In space approximation, we  define $S^{2,d}_j:={\rm span}\left\{\Phi^{2,d}_j(x,y)\right\},\,\Phi^{2,d}_j(x,y):=\Phi^d_j(x)\otimes\Phi^d_j(y)$. Then $\{S^{2,d}_j\}$ constitutes a MRA of $L_2(\Omega)$ with  the refinement relation:
\[\Phi^{2,d}_j(x,y)^T=\Phi^{2,d}_{j+1}(x,y)^TM^{2,d}_{j,0},\quad M^{2,d}_{j,0}=M^d_{j,0}\otimes M^d_{j,0}.\]
Letting $\Psi_j^{2,d}(x,y):=\left\{\Psi^d_j(x)\otimes \Phi^d_j(y),\Phi^d_j(x)\otimes\Psi^d_j(y),\Psi^d_j(x)\otimes\Psi^d_j(y)\right\}$,
 one can define the corresponding wavelet spaces by $W^{2,d}_j:=S^d_{j+1}\cap (S^{d}_j)^T = {\rm span} \left\{\Psi_j^{2,d}(x,y)\right\}$, and
it holds that
\begin{equation}
\Psi_j^{2,d}(x,y)^T=\Phi^{2,d}_{j+1}(x,y)^TM^{2,d}_{j,1}, \quad\rm{where}\,\,
 M^{2,d}_{j,1}=\left(M^d_{j,1}\otimes M^d_{j,0},\,M^d_{j,0}\otimes M^d_{j,1},\,M^d_{j,1}\otimes M^d_{j,1}\right).
 \end{equation}
Then $\left\{\Phi^{2,d}_{J_0}(x,y),\Psi^{2,d}_{J_0}(x,y),\cdots,\Psi^{2,d}_{J-1}(x,y)\right\}$ forms a multiscale base of $S^{2,d}_J$; and the relations (\ref{Refine-raltion})  still hold with $M_j:=\left(M^{2,d}_{j,0},M^{2,d}_{j,1}\right)$,  which   satisfy the same properties as their one-dimension version. The full discrete MGM of (\ref{eq:example2}) being similar to (\ref{eq:sss5}) reads as:
 find $G_J^n=P_J^{n,d}\Phi_J^{2,d}\in S_J^{2,d}\subset H_0^{\frac{\alpha}{2}}(\Omega)$ such that
\begin{eqnarray} \label{eq:twodimension}
\qquad (\mathcal{L}G^n_J,\,v)+\left({}_0D_x^{\frac{\alpha}{2},0}G_J^n,{}_xD_1^{\frac{\alpha}{2},0}v\right)+\left({}_0D_y^{\frac{\alpha}{2},0}G_J^n,{}_yD_1^{\frac{\alpha}{2},0}v\right)=(f,\,v) \quad\forall v\in S^{2,d}_J,
\end{eqnarray}
where $\mathcal{L}G^n_J$ is given in (\ref{eq:discrete}), including the PI and FBDF cases.
It results in the similar theoretical results as given in Theorem \ref{theorem:22}.
Denoting
\begin{eqnarray}
 B^{2,d}_J&=&Q_0^{\gamma}M^d_J\otimes M^d_J+M_J^d\otimes A_l^d+ A_l^d\otimes M_J^d,\\\nonumber
F^{2,d}_J&=&\left(f,\Phi^{2,d}_J\right)+\sum_{l=1}^{n-1}e^{-pU l \tau}\left(Q_{l-1}^{\gamma}-Q_l^{\gamma}\right)\left(M^d_J\otimes M^d_J\right)P_J^{n-l,d}+Q_{n-1}^{\gamma}\left(M^d_J\otimes M^d_J\right)P_J^{0,d},\nonumber
\end{eqnarray}
where $M_J^d$ denotes the mass matrix, (\ref{eq:twodimension}) can be rewritten as
\begin{eqnarray}\label{eq:twomen}
 B^{2,d}_JP^{n,d}_J=F^{2,d}_J.
\end{eqnarray}
Note that $(M_J^{d}\otimes M_J^d) p^{l,d}_J$ can be calculated with the cost $\mathcal{O}(2^{2J})$, and
\begin{eqnarray}\label{eq:Matrixform}
\qquad\left(M_J^d\otimes A_l^d\right)P^{l,d}_J=A_l\mathcal{R}(P_J^{l,d})(M^d_J)^T,\quad  \left(A_l^d\otimes M_J^d \right)P^{l,d}_J=M_J^d\mathcal{R}(P_J^{l,d})(A_l^d)^T,
\end{eqnarray}
where $\mathcal{R}(P^{l,d})$ denotes the reshaped matrix of vector $P_J^{l,d}$. Thanks to the quasi-Toeplitz structure of $A_l^d$ and the sparse structure of $M_J^d$, both of the matrix vector products in (\ref{eq:Matrixform}) can be calculated with the cost $\mathcal{O}(J2^{2J})$. Thus the iteration method can be effectively used to solve (\ref{eq:twomen}).
\begin{table}[!h t b]\fontsize{6.0pt}{12pt}\selectfont
\begin{center}
 \caption{ Numerical results of the PI scheme of (\ref{eq:twodimension}) for Example 4.2, solved by the GMRES (restarted) and Bi-CGSTAB, with $pU=0, N=2^{2J}, d=3, J_0=2$, and $T=0.5$.}
\begin{tabular} {cc|c c c|c c c|c c}  \hline
  $(\alpha,\gamma)$        & $J$    &\multicolumn{3}{c|}{GMRES (30)}  &\multicolumn{3}{c|}{Bi-CGSTAB} &\multicolumn{2}{c}{Gauss} \\\cline{3-10}
             &         & Iter&  CPU(s)  &Err-2      & Iter& CPU(s) &Err-2                        & CPU(s) & Err-2         \\
      \hline
              & $4$  &46&  4.9419&  7.5237e-06   &30.3   &   3.0249 &7.5237e-06         & 0.7319  & 7.5237e-06    \\
$(1.5,0.5)$& $5$  &45&   31.0692&  5.2053e-07   &28.4   &  22.3599 &5.2053e-07         & 41.7140  & 5.2053e-07      \\
                 & $6$    &49&   691.7970& 3.4986e-08   &36.3    &  528.3210 & 3.4983e-08        & 9.3463e+03  & 3.4982e-08  \\
      \hline
    &$4$       & 21    & 2.0627 & 4.0444e-06   &12.5&   1.7651&   4.0444e-06         & 0.8240  & 4.0444e-06   \\
$(1.8,0.6)$& $5$      &  28   &  20.0747 & 2.5297e-07  &18.0&  17.7395&   2.5297e-07         & 43.1349 & 2.5297e-07       \\
           & $6$      &  41   & 486.7180  & 1.5677e-08  &28.0&   493.3244 &  1.5677e-08         &  4.5034e+03  & 1.5677e-08    \\
    \hline
   \end{tabular}\label{tab:2-1}
\end{center}
\end{table}
 The norm equivalence similar to (\ref{normequiv}) ensures that the following system is well conditioned:
\begin{eqnarray}
 \left(DM^TB^{2,d}_JMD\right)D^{-1}M^{-1}P^{n,d}_J=DM^T F^{2,d}_J,
\end{eqnarray}
where $MP^{l,d}, M^TP_J^{l,d}$ can be calculated by  $2$-dimension FWT with the cost $\mathcal{O}(2^{2J})$ \cite{Cohen:00,Urban:09}, and $D$ donotes the diagonal matrix obtained by the inverse  square root of the diagonal of $M^TB^{2,d}_JM$,  using the translation property of the same level wavelet bases and tensor-like structure of the multiscale base of $S_J^{2,d}$, which can be calculated with the cost $\mathcal{O}(J)$. For $d=3$ and $d=2$, the numerical results are listed in Table \ref{tab:2-1} and Table \ref{tab:2-2}, respectively, where `Iter' denotes the average iterations, and `Gauss' denotes the Gaussian elimination method. For fixed $t_n$, the initial iteration vector is chosen as the approximation solution at $t_{n-1}$, and the stopping criterion  is
 \[ \frac{\|r(k)\|_{l_2}}{\|r(0)\|_{l_2}}\le 1e-8,\]
 with $r(k)$ being the residual vector of linear systems after $k$ iterations.
 \begin{table}[!h t b p]\fontsize{6.0pt}{12pt}\selectfont
\begin{center}
 \caption{Numerical results of the FBDF scheme of (\ref{eq:twodimension}) for Example 4.2, solved by the GMRES (restarted) and Bi-CGSTAB (before/after preconditioned), with $pU=2, \alpha=1.5,  N=2^{J}, d=2, J_0=1$, and $T=0.5$.}
\begin{tabular}{C{0.8cm}C{0.1cm}|C{0.5cm}C{1cm}|C{0.5cm}C{0.6cm}|C{0.5cm}C{1cm}|C{0.5cm}C{0.6cm}|C{1.2cm}C{1.2cm}}  \hline
  $\gamma$        & $J$    &\multicolumn{2}{c|}{GMRES (30)}  &\multicolumn{2}{c|}{Pre-GMRES (30)} &\multicolumn{2}{c|}{Bi-CGSTAB}  &\multicolumn{2}{c|}{Pre-Bi-CGSTAB}&\multicolumn{2}{c}{Gauss} \\ \cline{3-12}
             &         & Iter&  CPU(s)      & Iter& CPU(s)      & Iter  & CPU(s)     & Iter  & CPU(s)   &  CPU(s)& Err-2 \\
      \hline
             & $5$    &  88    & 3.0068    &35&  1.3752   &44.4   & 2.0079      &21     & 1.3134  &1.6351 &5.8984e-06 \\
 $0.3$ & $6$   & 166.3  & 27.5488   &38&  9.6810   &95.7   & 20.1165     &23     & 8.1409  &58.3051&2.8107e-06    \\
            & $7$    &  334.9  &  372.4754 &40& 70.9539   &203.8  & 311.6926    &24    & 68.1188  & 4.1827e+03&1.4183e-06\\
      \hline
            &$5$    & 58.1 & 1.6590   &  32   & 1.2645    &35.0    & 1.5070   &18.5&   1.1204            &1.3775&2.4580e-05\\
$0.8$& $6$   & 94.2 & 18.5311  &  35   & 8.9110     &65.5     & 12.9719  &20.9&   7.5250           &57.4800&1.2492e-05    \\
           & $7$   & 155.7 &176.7667  &  35   & 63.1512    &136.4     & 180.8864  &20.9&   59.2873         &3.7824e+03& 6.3009e-06  \\
    \hline
   \end{tabular}\label{tab:2-2}
\end{center}
\end{table}
\begin{example}
We further consider the equation
\begin{eqnarray}\label{eq:1.1-1}
\frac{\partial}{\partial t}G_{x}(p,t)=K_{\gamma,\alpha}\, {}^SD_t^{1-\gamma}\,\nabla_{x}^{\alpha} G_{x}(p,t)-pU(x)G_{x}(p,t)+f(x,p,t),\quad 0<t\le T,
\end{eqnarray}
on $(0,1)$ with $T=0.5,\, K_{\gamma,\alpha}=-2\cos(\frac{\alpha\pi}{2})$, and $ U(x)=x$. The forcing term and initial condition can be derived from the exact solution $G_{x}(p,t)=\left(t^{\sigma}+5\right)e^{-pxt}\left(x^3-x\right)$.
\end{example}
The equivalent form of (\ref{eq:1.1-1}) is
\begin{eqnarray}
{}^SD_{*,t}^{\gamma}G_{x}(p,t)=K_{\gamma,\alpha}\nabla_{x}^{\alpha}G_{x}(p,t)+{}_0D_t^{-(1-\gamma),pU}f(x,p,t),\quad 0<t\le T.
\end{eqnarray}
Let $L^2(\Omega)$ and $H_0^{\frac{\alpha}{2}}(\Omega)$ be the complex value Sobolev spaces equipped with the corresponding complex inner product and norms. The fully discrete scheme can be given as: find $G_J^n\in\left(S^d_J+iS^d_J\right)\subset H_0^{\frac{\alpha}{2}}(\Omega)$ such that
\begin{equation}\label{eq:ee2}
\left({\mathcal{L}}G^n_J,\,v\right)+K_{\gamma,\alpha}B\left(G_J^n,\,v\right)=\left(f,\,v\right)\qquad \forall v\in S^d_J,
\end{equation}
where $\mathcal{L}G^n_J$ is given in (\ref{eq:discrete}), including the PI and FBDF cases. Using  $\mathcal{B}_J$ given in (\ref{eq:BB}) and defining $\mathcal{M}$ as the orthogonal projection operator: $\mathcal{M}u\in S_J^d$ and $\left(\mathcal{M} u -u,v\right)=0\, \,\forall v\in S_J^d$,
 then one can rewrite (\ref{eq:ee2}) as an operator form
\begin{equation}
G_J^n=\left({Q}^r_0+\mathcal{B}_J\right)^{-1}\left(\sum_{l=1}^{n-1}\left({Q}^{\gamma}_{l-1}-{Q}^{\gamma}_l\right)\mathcal{M}\left(e^{ -pUl}G_J^{n-l}\right)+{Q}^{\gamma}_{n-1}\mathcal{M}\left(e^{-pUn\tau}G_J^{0}\right)+\mathcal{M} f\right).
\end{equation}
 Thus, if $\Re\,(pU)\ge 0$ it holds that
\begin{equation}
\left\|G_J^n\right\|\le\left\|\left({Q}^r_0+\mathcal{B}_J\right)^{-1}\right\|\left(\sum_{l=1}^{n-1}\left({Q}^{\gamma}_{l-1}-{Q}^{\gamma}_l\right)
\left\|G_J^{n-l}\right\|+{Q}^{\gamma}_{n-1}\left\|G_J^{0}\right\|+\left\|\mathcal{M} f\right\|\right).
\end{equation}
Since $B_J$ is a selfadjoint positive define elliptic operator,
\begin{equation}
\left\|\big({Q}^r_0+\mathcal{B}_J\big)^{-1}\right\|=\sup_{\lambda >0}\left|\left({Q}^r_0+\lambda\right)^{-1}\right|< {1}/{{Q}^r_0}.
\end{equation}
Using the mathematical induction similar to the proof of Theorem \ref{theorem:22}, it is easy to show that the presented scheme (\ref{eq:ee2}) is unconditionally stable. The convergence rates for time and space  are given in Table \ref{tab:3-1} and  Figure \ref{Fig1.sub-1}, respectively, which accord with the theoretical results given in Theorem \ref{theorem:22}.
\begin{table}[!h t b]\fontsize{6.0pt}{12pt}\selectfont
\begin{center}
 \caption{Convergence rate in time of (\ref{eq:ee2}) for Example 4.3 with $K_{\gamma,\alpha}=-2\cos(\alpha\pi/2),p=1+i, i=\sqrt{-1},\, U(x)=x,\sigma=2,d=3$, and $T=0.5$.}
\begin{tabular} {cc|cc|cc|cc|c|c}  \hline
  $(\alpha,J)$        & $N$    &\multicolumn{2}{c|}{$\gamma=0.5$, FBFD }  &\multicolumn{2}{c|}{$\gamma=0.5$, PI} &\multicolumn{2}{c|}{$\gamma=0.8$, FBFD}  &\multicolumn{2}{c}{$\gamma=0.8$, PI}\\\cline{3-10}
             &         & Err-2&  Rate      & Err-2& Rate                               & Err-2&  Rate      & Err-2& Rate   \\
      \hline
             & $20$    & 9.9484e-05  &---    &  1.8120e-05  &---             & 1.9401e-04    &---        &9.4674e-05     &---    \\
 $(2,8)$    & $40$     & 4.9881e-05 &0.9960     & 6.4990e-06  & 1.4793       & 9.7205e-05  &0.9970     & 4.1306e-05    & 1.1966    \\
            & $60$     & 3.3285e-05 &0.9978  &  3.5599e-06  &1.4845          & 6.4847e-05  &0.9983      & 2.5415e-05    &1.1979 \\
      \hline
            &$20$      & 2.9375e-04   &---      &5.4765e-05  &---              &  5.8284e-04    &---       &2.9075e-04  &---     \\
$(1.6,6)$& $40$     &1.4734e-04   &0.9955   &1.9697e-05   &1.4753            & 2.9237e-04    &0.9953       &1.2705e-04  &1.1944      \\
           & $60$      & 9.8339e-05  &0.9971   &1.0819e-05  & 1.4777             & 1.9514e-04    &0.9972    &7.8226e-05   &1.1962    \\
    \hline
   \end{tabular}\label{tab:3-1}
\end{center}
\end{table}
\begin{figure}[!h t b p]
\centering
\includegraphics[width=0.45\textwidth]{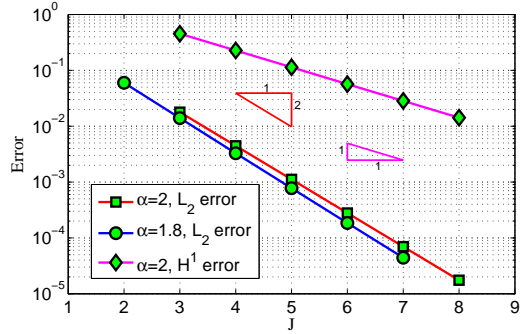}
\caption{Convergence rate in space of the FBDF scheme of (\ref{eq:ee2}) for Example 4.3, where $\sigma=2, p=1+i, i=\sqrt{-1},\,\gamma=0.5, N=2^{2J}$, and $d=2$.}    \label{Fig1.sub-1}
\end{figure}

Here, we further solve (\ref{eq:1.1-1}) by B-spline collocation method, which is convenient to treat the case with variable coefficients. For $d=4$, denote  
  \begin{eqnarray}
  \phi^4(x)&=&\frac{1}{6}\sum_{i=0}^4{4 \choose i}(-1)^i(x-i)_+^3,\\ \label{Cubic:1}
   \phi^{4,0}(x)&=&3x_+-\frac{9}{2}x_+^2+\frac{7}{4}x_+^3-2(x-1)_+^3+\frac{1}{4}(x-2)_+^3,\\
   \phi^{4,1}(x)&=&\frac{3}{2}x_+^2-\frac{11}{12}x_+^3+\frac{3}{2}(x-1)_+^3-\frac{3}{4}(x-2)_+^3+\frac{(x-3)^3_+}{6}. \label{Cubic:2}
	\end{eqnarray}
Then $\Phi_J=2^{J/2}\left\{\phi^{4,0}(2^Jx),\,\phi^{4,1}(2^Jx), \,\phi^4(2^Jx-k) \left| _{k=0}^{2^J-4}\right.,\, \phi^{4,1}\left(2^J(1-x)\right),\, \phi^{4,0}\left(2^J(1-x)\right)\right\}$ is a Riesz base of $S_J$, and the collocation method reads: find $G_J^n\in S_J$ such that
\begin{equation} \label{eq:4.41}
\big(\mathcal{L}_1G_J^n-K_{\gamma,\alpha}\mathcal{L}_2\left(\nabla_x^{\alpha}G_J^n\right)-f\big)\Big|_{x_i}=0 \quad \forall x_i\in \left\{{1}/{2^{J+1}},\,{k}/{2^J}\left|_{k=1}^{2^J-1}\right.,\,1-{1}/{2^{J+1}}\right\},
\end{equation}
where $\mathcal{L}_1G_J^n$ and $\mathcal{L}_2G_J^n$ are given in (\ref{eq:L1-1}) and (\ref{eq:L1-21}), i.e., the PI case.
 In the implementation, one needs to calculate the left and right fractional differential matrix $A_{L}:={}_0 D_x^{\alpha}\Phi_J\big|_{x_i}$ and $A_{R}:={}_x D_1^{\alpha}\Phi_J\big|_{x_i}$. The former can be got by first getting the formula of ${}_0 D_x^{\alpha}\Phi_J$ similar to (\ref{eq:formale-1}) and (\ref{eq:formale-2}),  then use the fact that the  matrix produced by $2^{J/2}{}_0 D_x^{\alpha}\phi(2^Jx-k)\big|_{i/2^J},\,k,i=2,3,\cdots,2^J-2$ has a Toeplitz structure; and the latter can be easily obtained by using the relationship $A_{R}=A_{L}({\rm end}:-1:1,{\rm end}:-1:1)$. In fact, for $v(x)$ satisfying $v(0)=v(1)=0$, there exists
\begin{eqnarray}\label{eq:compute-1}
{}_0D_x^{\alpha}v(x)&=&{}_0D_x^{-(2-\alpha)}v^{\prime\prime}(x)+\frac{v^{\prime}(0)}{\Gamma(2-\alpha)}x^{1-\alpha},\\\nonumber
{}_xD_1^{\alpha}v(x)&=&{}_xD_1^{-(2-\alpha)}v^{\prime\prime}(x)-\frac{v^{\prime}(1)}{\Gamma(2-\alpha)}(1-x)^{1-\alpha}.
\end{eqnarray}
For $k=0,1,\cdots,2^J-4$,
 \begin{eqnarray}
 {}_0 D_x^{\alpha}\phi^4(2^Jx-k)&=&\frac{2^{2J}}{\Gamma(2-\alpha)}\int_0^x(x-\xi)^{1-\alpha}\left(\phi^4\right)^{\prime\prime}(2^J\xi-k)\mathrm{d}\,\xi\\\nonumber
                               &=&\frac{2^{J\alpha}}{\Gamma(2-\alpha)}\int_0^{2^Jx-k}(2^Jx-\xi-k)^{1-\alpha}\left(\phi^4\right)^{\prime\prime}(\xi)\mathrm{d}\,\xi,
 \end{eqnarray}
 \begin{eqnarray}
 {}_x D_1^{\alpha}\phi^4\left(2^Jx-(2^J-4-k)\right)&=&\frac{2^{2J}}{\Gamma(2-\alpha)}\int_x^1(\xi-x)^{1-\alpha}\left(\phi^4\right)^{\prime\prime}(2^J-2^J\xi-k)\mathrm{d}\,\xi\\\nonumber
                               &=&\frac{2^{J\alpha}}{\Gamma(2-\alpha)}\int_0^{2^J-2^Jx-k}(2^J-k-\xi-2^Jx)^{1-\alpha}\left(\phi^4\right)^{\prime\prime}(\xi)\mathrm{d}\,\xi,
 \end{eqnarray}
then
\begin{equation}
{}_0 D_y^{\alpha}\phi^4(2^Jy-k)\Big|_{y=x}={}_y D_1^{\alpha}\phi^4(2^Jy-(2^J-4-k))\Big|_{y=1-x}, \quad x,y\in(0,1),
\end{equation}
where the properties $\phi^4(x)=\phi^4(4-x)$ and ${\rm supp}\phi^4(x)=(0,4)$ are used. 
Noticing that ${\rm supp}\phi^{4,1}(x)=(0,3),\, \phi^{4,1}(0)=\left(\phi^{4,1}\right)^{\prime}(0)=0$ and ${\rm supp}\phi^{4,0}(x)=(0,2),\,\phi^{4,0}(0)=0$, it is also easy to obtain
\begin{eqnarray} \label{eq:compute-2}
{}_0 D_y^{\alpha}\phi^{4,1}(2^Jy)\Big|_{y=x}&=&{}_y D_1^{\alpha}\phi^{4,1}\left(2^J(1-y)\right)\Big|_{y=1-x}, \quad x,y\in\left(0,1\right),\\\nonumber
{}_0 D_y^{\alpha}\phi^{4,0}(2^Jy)\Big|_{y=x}&=&{}_y D_1^{\alpha}\phi^{4,0}\left(2^J(1-y)\right)\Big|_{y=1-x}, \quad x,y\in\left(0,1\right).
\end{eqnarray}
When performing the numerical computation, the exact solution of Example 4.3 is replaced by $G_{x}(p,t)=(t^{\sigma}+5)(x^3-x^4)$.  The numerical results are presented in Table \ref{tab:3-3}, which show a ($4-\alpha$)-th order convergence in space.
\begin{table}[htbp]\fontsize{7.0pt}{11pt}\selectfont
\begin{center}
 \caption{Convergence rate (maximum norm) in space of the PI scheme of (\ref{eq:4.41}) for Example 4.3, where $K_{\gamma,\alpha}=-2\cos(\alpha\pi/2), U(x)=x,\sigma=2,N=2^{15},d=4$, and $T=0.5$.}
\begin{tabular} {c|cc|cc|cc|c|c}  \hline
   $J$    &\multicolumn{2}{c|}{$p=1+i,\alpha=2$}  &\multicolumn{2}{c|}{$p=10i,\alpha=2$} &\multicolumn{2}{c|}{$p=0,\alpha=1.3$}  &\multicolumn{2}{c}{$p=1+i,\alpha=1.6$}\\\cline{2-9}
        & maxErr&  Rate      & maxErr& Rate                               & maxErr&  Rate      & maxErr& Rate   \\
      \hline
         $3$    &  1.8039e-02  &---    &  1.7092e-02   &---             & 2.0181e-03 &---        & 5.1635e-03   &---    \\
         $4$    &   4.7323e-03  &1.9305  & 4.4882e-03   & 1.9291          & 3.2128e-04  & 2.6511    & 1.0086e-03   & 2.3560    \\
         $5$     &   1.1970e-03 &1.9831  & 1.1354e-03  &  1.9829          & 4.9939e-05  & 2.6856    & 1.8940e-04   & 2.4128 \\
    \hline
   \end{tabular}\label{tab:3-3}
\end{center}
\end{table}
\begin{example}
We solve the forward model
\begin{equation}\label{eq:forward}
\left\{\begin{array}{ll}
\frac{\partial}{\partial t}G(x,p,t)=K_{\gamma,\alpha}\,\nabla_x^{\alpha}\,{}^SD_t^{1-\gamma}G(x,p,t)-pU(x)G(x,p,t)+f(x,p,t), &0<t\le T,\\[5pt]
G(x,p,0)=g(x,p), &x\in (0,1),
\end{array}
\right.
\end{equation}
where the values of the parameters are the same as Example 4.3. 
\end{example}
The MGM of the model reads: find $G_J^n\in S_J$ such that
\begin{equation} \label{eq:4.48}
\left(\mathcal{L}_1G^n_J,v\right)+K_{\gamma,\alpha}B\left(\mathcal{L}_2G^n_J,v\right)=\left(f,v\right)\quad \forall v\in S_J,
\end{equation}
The two cases for (\ref{eq:4.48}) are simulated, i.e., the FBDF case with $\mathcal{L}_1G_J^n$ and $\mathcal{L}_2G_J^n$ defined in (\ref{eq:L11}), and the PI case with $\mathcal{L}_1G_J^n$ and $\mathcal{L}_2G_J^n$ defined in (\ref{eq:L1-1}) and (\ref{eq:L1-21}).
The mass matrix $\left(e^{-pUj\tau}\Phi_J,\phi_J\right)$ can be directly calculated by Gauss quadrature. To handle the terms $B\left(e^{-pUj\tau}\Phi_J,\,\Phi_J\right)$, it can be noticed that
\begin{eqnarray}
\left({}_0D_x^{\frac{\alpha}{2},0}\left(e^{-pU j\tau}\Phi_J\right),{}_xD_1^{\frac{\alpha}{2},0}\Phi_J\right)&=&\left(\frac{d}{dx}\left(e^{-pU j\tau}\Phi_J\right),{}_xD_1^{\alpha-1,0}\Phi_J\right),\label{eq:right13}\\ 
\left({}_xD_1^{\frac{\alpha}{2},0}\left(e^{-pU j\tau}\Phi_J\right),{}_0D_x^{\frac{\alpha}{2},0}\Phi_J\right)&=&-\left(\frac{d}{dx}\left(e^{-pU j\tau}\Phi_J\right),{}_0D_x^{\alpha-1,0}\Phi_J\right).\label{eq:right14}
\end{eqnarray}
Letting $\Phi_J^r={\rm fliplr(\Phi_J)}$ be the reverse rearrangement of $\Phi_J$, from (\ref{eq:formale-2}), one has
\begin{equation}
{}_xD_1^{\alpha-1,0}\Phi_J(x)={}_0D_x^{\alpha-1,0}\Phi_J^r(1-x).
\end{equation}
After an exact calculation of ${}_0D_x^{\alpha-1,0}\Phi^r_J$ and ${}_xD_1^{\alpha-1,0}\Phi^r_J$, one can obtain the matrixes corresponding to (\ref{eq:right13}) and (\ref{eq:right14}) by  Gauss quadrature. The convergence rates in time and space are listed in Tables \ref{tab:3-4} and \ref{tab:3-5}, respectively, which are good consistent with the theoretical results presented in Theorem \ref{theorem:11}.
\begin{table}[!h t b]\fontsize{7.0pt}{12pt}\selectfont
\begin{center}
 \caption{Convergence rate in time of (\ref{eq:4.48}) for Example 4.4 with $\sigma=2, \alpha=2,K_{\gamma,\alpha}=-2\cos(\alpha\pi/2),p=1+i, i=\sqrt{-1}, U(x)=x,d=3,J=9,$ and $T=0.5$.}
\begin{tabular} {c|cc|cc|cc|c|c}  \hline
   $N$    &\multicolumn{2}{c}{$\gamma=0.2$, FBFD }  &\multicolumn{2}{c|}{$\gamma=0.2$, PI} &\multicolumn{2}{c|}{$\gamma=0.6$, FBFD}  &\multicolumn{2}{c}{$\gamma=0.6$, PI}\\\cline{2-9}
        & Err-2&  Rate      & Err-2& Rate                               & Err-2&  Rate      & Err-2& Rate   \\
      \hline
         $120$    &  3.5841e-04   &---    &1.3009e-04  &---             &1.9803e-04  &---       & 9.9421e-06   &---   \\
         $150$    &  2.8670e-04   &1.0005  &9.9597e-05  &1.1971        &1.5842e-04  &1.0000    &6.9837e-06   &1.5829  \\
         $180$    &  2.3890e-04   &1.0004   &8.0061e-05  &1.1975              &1.3202e-04  &1.0000    & 5.2316e-06     &1.5844  \\
    \hline
   \end{tabular}\label{tab:3-4}
\end{center}
\end{table}
\begin{table}[!h t b]\fontsize{7.0pt}{12pt}\selectfont
\begin{center}
 \caption{Convergence rate in space of the FBDF scheme of (\ref{eq:4.48}) for Example 4.4 with $K_{\gamma,\alpha}=-2\cos(\alpha\pi/2),U(x)=x,\sigma=2,\gamma=0.5,N=2^{2J},T=0.5,$ and $d=2$.}
\begin{tabular} {c|cc|cc|cc|c|c}  \hline
   $J$    &\multicolumn{2}{c|}{$\alpha=1.2,p=1+i$}  &\multicolumn{2}{c|}{$\alpha=1.6,p=10+5i$} &\multicolumn{2}{c|}{$\alpha=1.8,p=5$}  &\multicolumn{2}{c}{$\alpha=2,p=i$}\\ \cline{2-9}
        & Err-2&  Rate      & Err-2& Rate                               & Err-2&  Rate      & Err-2& Rate   \\
      \hline
         $3$    & 1.1063e-02  &---    &   1.8155e-02   &---             & 1.2438e-02  &---       & 2.6211e-02     &---    \\
         $4$    & 2.5275e-03  &2.1300  &  4.3328e-03   &2.0670          & 2.9620e-03  &2.0701   & 6.5621e-03  & 1.9979     \\
         $5$    & 5.9327e-04  &2.0910  &  1.0042e-03  & 2.1093          & 6.9960e-04  &2.0820    & 1.6411e-03  & 1.9995 \\
    \hline
   \end{tabular}\label{tab:3-5}
\end{center}
\end{table}
\section{Conclusion}
A great number of small events usually lead to the Brownian motion, however the rare but large fluctuations may result in non-Brownian behavior. With the deep studying the non-Brownian motion, the functional distribution of the paths of particles performing anomalous diffusion attracts the interests of the researchers; and it has wide applications. This paper discusses  the model, characterizing the distribution of the functional of the paths of anomalous motion (described by the time-space fractional diffusion equation). The main efforts of this paper are to provide the efficient computation methods for the model. Two types of time discretizations are introduced for the fractional substantial derivative. And the multiresolution Galerkin methods are used for the space approximation with B-spline functions as the bases, which has the striking benefits of keeping the Toeplitz structure of the stiffness matrix. The unconditional stability and convergence are theoretically proved and numerically verified. In fact, in the section of numerical experiments, more detailed numerical techniques and implementations are introduced, including the preconditioning, spline collocation, nonuniform time discretizations, etc.


\end{document}